\newtheorem{theorema}{Theorem}
\newtheorem{thrm}{Theorem}[section]
\newtheorem{lemma}[thrm]{Lemma}
\newtheorem{prop}[thrm]{Proposition}
\newtheorem{cor}[thrm]{Corollary}
\theoremstyle{remark}
\newtheorem{rmrk}[thrm]{Remark}
\theoremstyle{definition}
\newtheorem{dfn}[thrm]{Definition}
\newcommand{\Q}{\mathbb{Q}}
\newcommand{\Rn}{\mathbb R^n}
\newcommand{\R}{\mathbb R}
\newcommand{\Om}{\Omega}
\newcommand{\p}{\partial}
\newcommand{\la}{\lambda}
\newcommand{\vf}{\varphi}
\newcommand{\G}{\Gamma}
\newcommand{\BB}{\B_r}
\newcommand{\La}{\mathscr L_a}
\newcommand{\Gb}{\overline{\mathscr G}_a}
\renewcommand{\phi}{\varphi}
\renewcommand{\epsilon}{\varepsilon}
\numberwithin{equation}{section}
\newcommand{\beq}{\begin{equation}}
\newcommand{\bea}[1]{\begin{array}{#1} }
\newcommand{\eeq}{ \end{equation}}
\newcommand{\ea}{ \end{array}}
\newcommand{\ve}{\varepsilon}
\newcommand{\Rnn}{\mathbb R^{n+1}}
\newcommand{\Rnp}{\mathbb R^{n+1}_+}
\newcommand{\B}{\mathbb{B}}
\newcommand{\Sa}{\mathbb{S}}
\newcommand{\pa}{\p^a_y v}
\newcommand{\pax}{\p^a_y v(x,0)}
\newcommand{\Wa}{\mathscr W}
\begin{document}

\title[The regular free boundary, etc.]{The regular free boundary in the thin obstacle problem for degenerate parabolic equations}

\author{Agnid Banerjee}
\address{TIFR CAM, Bangalore-560065}
\email[Agnid Banerjee]{agnidban@gmail.com}
\thanks{The first author was supported in part by SERB Matrix grant MTR/2018/000267}

\author{Donatella Danielli}
\address{Department of Mathematics \\ Purdue University \\ 47907 West Lafayette, IN}
\email[Donatella Danielli]{danielli@math.purdue.edu}

\author{Nicola Garofalo}
\address{Dipartimento di Ingegneria Civile, Edile e Ambientale (DICEA) \\ Universit\`a di Padova\\ 35131 Padova, ITALY}
\email[Nicola Garofalo]{rembrandt54@gmail.com}
\thanks{The third author was supported in part by a Progetto SID (Investimento Strategico di Dipartimento) ``Non-local operators in geometry and in free boundary problems, and their connection with the applied sciences'', University of Padova, 2017.}

\author{Arshak Petrosyan}
\address{Department of Mathematics \\ Purdue University\\ 47907 West Lafayette, IN}
\email[Arshak Petrosyan]{arshak@purdue.edu}
\thanks{The fourth author was supported in part by NSF Grant DMS-1800527.}

\dedicatory{Dedicated to Nina, with affection and deep admiration. Her pioneering ideas have left a permanent mark in PDEs, and inspired scores of mathematicians}

\begin{abstract}
In this paper we study the existence, the optimal regularity of solutions, and the regularity of the free boundary near the so-called \emph{regular points} in a thin obstacle problem that arises as the local extension of the obstacle problem for the fractional heat operator $(\partial_t - \Delta_x)^s$ for $s \in (0,1)$. Our regularity estimates are completely local in nature. This aspect is of crucial importance in our forthcoming work on the blowup analysis of the free boundary, including the study of the singular set. Our approach is based on first establishing the boundedness of the time-derivative of the solution. This allows reduction to an elliptic problem at every fixed time level. Using several results from the elliptic theory, including the epiperimetric inequality, we establish the optimal regularity of solutions as well as $H^{1+\gamma,\frac{1+\gamma}{2}}$ regularity of the free boundary near such regular points. 
\end{abstract}

\maketitle

\tableofcontents

\section{Introduction}\label{S:Intro}
\subsection{Statement of the problem}
The primary objective of the present paper is the study of the thin obstacle problem for a degenerate parabolic
operator
\begin{equation}\label{eq:La}
\mathscr L_a U\overset{\rm def}{=}|y|^a \p_t U -\operatorname{div}_{X}(|y|^a
\nabla_{X} U),\quad a\in (-1,1),
\end{equation}
where $X=(x,y)\in\R^{n}\times\R=\R^{n+1}$ and $t\in\R$. For a domain $\mathbb{D}$ in
$\R^{n+1}$, symmetric in $y$, let $\mathbb{D}^\pm=\mathbb{D}\cap \{\pm
y>0\}$, and $D=\mathbb{D}\cap \{y=0\}$. Given a function $\psi$ on
$D\times[t_0,T]$, $t_0<T$, known as \emph{the thin obstacle}, consider the
problem of finding a function $U$ in $\mathbb{D}^+\times(t_0,T]$ such that 
\begin{align}\label{epb}
&\begin{cases}
\mathscr L_a U=0&\text{in}\ \mathbb{D}^+\times(t_0,T],
\\
\min\{U(x,0,t)-\psi(x,t),-\p_y^a
U(x,0,t)\}=0 & \text{on }D\times(t_0,T],\\
\end{cases}
\end{align}
where $\partial_y^a U(x,0,t)$ is the weighted partial derivative of $U$ in
$y$ on $\{y=0\}$, defined by
\begin{equation*}
  \partial_y^a U(x,0,t)\overset{\rm def}{=}\lim_{y\to 0^+} y^a\partial_y U(x,y,t).
\end{equation*}
We also impose initial and lateral boundary conditions
\begin{align}
\label{epb-bdry}
\begin{cases}
U(X,t_0)= \phi_0(X) &\text{on $\mathbb{D}^+$},
\\
U= g &\text{on $(\partial \mathbb{D})^+\times(t_0,T)$},
\end{cases}
\end{align}
obeying the compatibility conditions $\phi_0=g(\cdot , t_0)$ on
$(\p\mathbb{D})^+$, $\phi_0 \geq \psi(\cdot, t_0)$ on $D$, $g \geq \psi$ on
$\partial D\times(t_0,T)$.

The conditions on $D\times(t_0,T]$ in \eqref{epb} are known as the
\emph{Signorini complementarity} (or \emph{ambiguous})
\emph{conditions}. Essentially, $D\times (t_0,T]$ is divided into two regions
\begin{align*}
\Lambda(U)&\overset{\rm def}{=}\{U=\psi\}\cap (D\times (t_0,T])\quad \text{(\emph{coincidence set})},\\
\Omega(U)&\overset{\rm def}{=}\{U>\psi\}\cap (D\times (t_0,T]),
\end{align*}
where the Dirichlet condition
$U(x,0,t)=\psi(x,t)$, and the Neumann-type condition
$\partial_y^aU(x,0,t)=0$ are respectively satisfied.
These two regions are separated by the set 
$$
\Gamma(U)\overset{\rm def}{=}\partial \Lambda(U)\cap
(D\times (t_0,T])\quad \text{(\emph{free boundary})},
$$
which is apriori unknown and may in principle have a complicated structure. Thus
knowing the regularity properties of $\Gamma(U)$ is one of the primary
objectives of the problem.

This type of boundary conditions go back to problems of 
Signorini type from elastostatics, see e.g.\ \cite{DL}, and typically arise in
problems with unilateral constraints. In fact, if 
\begin{multline*}
\mathscr{K}=\mathscr{K}_{\psi,g,\phi_0,\mathbb{D}^+,t_0,T}=\{v\in
\mathscr V_a(\mathbb{D}^+,t_0,T)\mid \\\text{$v\geq \psi$ on $D\times(t_0,T]$, $v=g$ on
  $(\partial \mathbb{D})^+\times(t_0,T]$, $v(\cdot,t_0)=\phi_0$}\}
\end{multline*}
is the constraint set of functions staying above the thin
obstacle $\psi$ (see Section~\ref{S:Notation} for the definition of
spaces $\mathscr V_a$), then we say that $U\in\mathscr K$, with $\partial_t U\in L^2(\mathbb{D}^+\times(t_0,T],|y|^a
  dXdt)$, is a weak solution of 
\eqref{epb}--\eqref{epb-bdry} if it satisfies for a.e.\ $t \in (t_0, T]$ the variational inequality 
\begin{equation}\label{var-ineq}
\int_{\mathbb{D}^+} \partial_t U (v-U)|y|^a
  dX+\int_{\mathbb{D}^+}\langle\nabla_X U,\nabla_X (v-U)\rangle
  |y|^a dX\geq
  0,\quad\text{for any } v\in\mathscr K.
\end{equation}
An important motivation for studying the problem \eqref{epb} is that it
serves as a localization of the (nonlocal) obstacle problem for the fractional heat
equation. More precisely, if $u$ is a function on
$\R^n\times(-\infty,T]$ with a sufficient decay at infinity,
satisfying the obstacle problem
\begin{equation}\label{frac-heat-obt}
\min\{u-\psi, (\partial_t -\Delta_x)^s u\}=0\quad\text{in }D\times(t_0,T],
\end{equation}
with fractional power $s=(1-a)/2\in(0,1)$, consider the solution $U$ of the \emph{extension problem}
\begin{equation*}
\begin{cases}
  \mathscr L_a U =0&\text{in }\R^{n+1}_+\times (-\infty,T],\\
  U(x,0,t)=u(x,t)  &\text{on }\R^n\times (-\infty,T],
\end{cases}
\end{equation*}
vanishing at infinity. Then, representing the equation $\mathscr L_a U=0$ in nondivergence form as
\begin{equation*}
{\p_t U} - \Delta_x U = \mathscr B_a U,
\end{equation*}  
where $\mathscr B_a = \frac{\p^2}{\p y^2} + \frac ay \frac{\p }{\p y}$
is the generator of the Bessel semigroup on $(\R^+,y^a dy)$, it can be
shown that
\begin{equation*}
(\p_t - \Delta_x)^s u(x,t)=- C_s\partial_y^a U(x,0,t), \quad\text{with
}C_s=\frac{2^{2s-1} \G(s)}{\G(1-s)},
\end{equation*}
see \cite{NS}, \cite{ST}, and also Section~3 in \cite{BG}.
Thus, the obstacle problem \eqref{frac-heat-obt} 
can be written in the form \eqref{epb}, thus localizing the
problem.

We further remark that the differential equation $\mathscr L_a U=0$ is a special case
of the following class of degenerate parabolic equations 
\[
\p_t(\omega(X) U) = \operatorname{div}_X(A(X) \nabla_X U)
\]
first studied by Chiarenza and Serapioni in \cite{CSe}. In that paper the authors assumed that $\omega\in L^1_{\rm loc}(\Rnn)$ is a Muckenhoupt $A_2$-weight independent of the time variable $t$, and that the symmetric matrix-valued function $X\mapsto A(X)$ (also independent of $t$) verifies the following degenerate ellipticity assumption for a.e. $X\in  \Rnn$, and for every $\xi\in \Rnn$:
\[
\la \omega(X) |\xi|^2 \le \langle A(X)\xi,\xi\rangle \le \la^{-1} \omega(X) |\xi|^2,
\]
for some $\la >0$.
Under such hypothesis they established a parabolic strong Harnack inequality, and therefore the local H\"older continuity of the weak solutions. The differential equation in \eqref{epb} is a special case of those treated in \cite{CSe} since, given that $a \in (-1,1)$, the function $\omega(X) = \omega(x,y) = |y|^a$ is an $A_2$-weight in $\Rnn$.

\subsection{Notations}\label{S:Notation}
 We indicate with $x = (x_1,\ldots,x_n)$ a generic point in the
 ``thin'' space $\Rn$. By the letter
 $y$ we will denote the ``extension variable'' on $\R$. The generic
 point in the ``thick'' space $\R^{n+1}=\R^n\times\R$ will be denoted by $X =
 (x,y)$ with $x\in\R^n$, $y\in\R$. Also, in many cases we will identify
 the thin space $\R^n$ with the subset $\{y=0\}=\R^n\times\{0\}\subset\R^{n+1}$.

 The points in the thin space-time
 $\R^n\times\R$ will be denoted by $(x,t)$ and the ones in the thick
 space-time either by $(X,t)$ or $(x,y,t)$.

 Given $r>0$, $x_0\in\R^n$, $X_0\in\R^{n+1}$, $t_0\in\R$ we denote
 \begin{alignat*}{2}
B_r(x_0)&=\{x\in\R^n\mid |x-x_0|<r\} &\quad&\text{(thin Euclidean
  ball)}\\
\B_r(X_0)&=\{X\in\R^{n+1}\mid |X-X_0|<r\} &&\text{(thick Euclidean
  ball)}\\
Q_r(x_0,t_0)&=B_r(x_0)\times(t_0-r^2,t_0]&&\text{(thin parabolic
  cylinder)}\\
\Q_r(X_0,t_0)&=\B_r(X_0)\times(t_0-r^2,t_0]&&\text{(thick parabolic cylinder)}.
 \end{alignat*}
We typically drop centers from the notations above if they coincide
with the origin. Thus, we write $B_r$, $\B_r$, $Q_r$, and $\Q_r$ for
$B_r(0)$, $\B_r(0)$, $Q_r(0,0)$, and $\Q_r(0,0)$, respectively.

For a set $E$ in the thick space $\R^{n+1}$ or the space-time
$\R^{n+1}\times\R$ we denote
$$
E^\pm=E\cap \{\pm y>0\}.
$$
Thus, $\B_r^\pm$, $\Q_r^\pm$ denote thick half-balls and parabolic
half-cylinders, $(\partial\B_r)^+$ is a half-sphere, etc.

Given an open set $\mathbb{D}\subset \Rnn$, we denote by $W^{1,2}(\mathbb{D},|y|^a dX)$ the space of functions in $L^2(\mathbb{D},|y|^a dX)$ whose distributional derivatives of order one belong to $L^2(\mathbb{D},|y|^a dX)$.
We endow such Hilbert space with the norm
\[
\|f\|_{W^{1,2}(\mathbb{D},|y|^a dX)} = \left(\int_{\mathbb{D}} \left(f^2 + |\nabla_X f|^2\right) |y|^a dX\right)^{1/2}.
\]
For given numbers $a\in (-1,1)$, $-\infty \le T_1<T_2 \le \infty$, and an open set $\mathbb{D}\subset \Rnn$, we define
\[
\mathscr V_a(\mathbb{D},T_1,T_2) = L^2((T_1, T_2);W^{1,2}(\mathbb{D},|y|^a dX)),
\]
and equip such space with the norm
\begin{equation}
\|w\|_{\mathscr V_a(\mathbb{D},T_1,T_2)} = \left(\int_{T_1}^{T_2} \int_{\mathbb{D}} \left(w^2+  |\nabla w|^2 \right) |y|^a dX dt\right)^{1/2} < \infty.
\end{equation}

We will also use fairly standard notations for H\"older classes of
functions $C^{\ell}$, $\ell=k+\gamma$, with $k\in\{0\}\cup\mathbb{N}$
and $0<\gamma\leq 1$, as well as their parabolic counterparts
$H^{\ell,\ell/2}$. We refer the reader to \cite{DGPT} for precise definitions.

We will also need the following weighted H\"older classes. For
an open set $\mathbb{D}\subset \subset \Rnn_+$, $K=\overline{\mathbb{D}}$, and $\gamma\in (0,1]$, by $C^{1+\gamma}_a(K)$
we denote the class of functions $f$ such that $\nabla_x f$,
$|y|^a \partial_yf\in C^{\gamma}(K)$, equipped with the norm
\begin{equation}\label{wnorm}
\|f \|_{C_a^{1+\gamma}(K)} = \|f\|_{C^{0}(K)} + \|\nabla_x f\|_{C^{\gamma}(K)} + \||y|^a \partial_yf\|_{C^{\gamma}(K)}.
\end{equation}
The parabolic version of the space above for $K=\overline{\mathbb{D}}\times[T_1,T_2]$ is the space
$H_a^{1+\gamma,(1+\gamma)/2}(K)$ of functions $f$ with
$\nabla_x f$, $|y|^a\partial_y f\in
H^{\gamma,\gamma/2}(K)$, $f\in
C^{(1+\gamma)/2}_t(K)$, with the norm
\begin{equation}\label{wnorm-parab}
\|f
\|_{H_a^{1+\gamma,(1+\gamma)/2}(K)}=\|f\|_{C^{(1+\gamma)/2}_t(K)}+
\|\nabla_x f \|_{H^{\gamma,\gamma/2}(K)}+
\||y|^a\partial_y f \|_{H^{\gamma,\gamma/2}(K)}.
\end{equation}

\subsection{Main results}
In the case $a=0$, the problem \eqref{epb}--\eqref{epb-bdry} is the
parabolic Signorini problem for the standard heat equation with
results on the regularity of solutions going back to the works of
Athanosopoulos \cite{Ath82} and Uraltseva \cite{Ura85} (see also
\cite{AU88}). For a comprehensive treatment of this problem we refer
to the work of three of us with T.~To \cite{DGPT}, as well as to
\cite{PZ}. For an alternative approach to this and related problems
we also refer to \cite{ACM1}.

For the case $a\in (-1,1)$, a version of the problem
\eqref{epb}--\eqref{epb-bdry} has been recently studied in
\cite{ACM}, where the authors used global assumptions on initial
data $\phi_0$ to infer quasi-convexity properties of the solutions,
leading to their optimal regularity, as well as to the regularity
of the free boundary near certain types of points. The approach that we take in
the present paper is purely local, which is of crucial importance in the
further analysis of the 
problem as it allows to consider the blowups at free boundary points,
leading to their fine classification, see our forthcoming paper \cite{BDGP2}.

We now state the main results in this paper. Since we are mainly
interested in local properties of the solutions $U$ of \eqref{epb}, as
well as their free boundaries $\Gamma(U)$, we may assume that the
domain $\mathbb{D}\times(t_0,T]$ is a parabolic cylinder centered on a
thin space-time, and by using a translation and scaling, we may assume
$$
\mathbb{D}\times(t_0,T]=\Q_1.
$$
Throughout this paper for a given number $a\in (-1,1)$ we denote by $\kappa_0$ the number
\begin{equation}\label{mino}
\kappa_0 = \frac{3-a}{2}=1+s,
\end{equation} 
where $s=(1-a)/2\in (0,1)$ is fractional power of the heat equation,
in the corresponding obstacle problem \eqref{frac-heat-obt}.

\begin{theorema}[Regularity of solutions]\label{T:or}
Let $U$ be a weak solution to \textup{\eqref{epb}--\eqref{epb-bdry}} in $\Q_1^+$
in the sense that the variational inequality \eqref{var-ineq} is
satisfied. Assume 
also that $\psi\in H^{4,2}(Q_1)$. Then,
\begin{enumerate}[\upshape (i)]
\item $U\in H_a^{1+\gamma,(1+\gamma)/2}\left(\overline{\Q_{1/2}^+}\right)$ and
  $\partial_t U\in L^\infty(\Q_{1/2}^+)$ for some $\gamma=\gamma(n,a)\in
  (0,1)$ and
$$
 \|U\|_{H_a^{1+\gamma,(1+\gamma)/2}\left(\overline{\Q_{1/2}^+}\right)}+\|\partial_t
U\|_{L^\infty(\Q_{1/2}^+)}\leq C(n,a)\left(\|U\|_{L^2(\Q_1^+,|y|^a dXdt)}+\|\psi\|_{H^{4,2}(Q_1)}\right).
$$
\item $U(\cdot,y,\cdot)\in
H^{\kappa_0,\kappa_0/2}\left(\overline{Q_{1/2}}\right)$ uniformly
for $y\in [0,1/2]$
with
$$
\sup_{y\in[0,1/2]} \|U(\cdot,y,\cdot)\|_{H^{\kappa_0,\kappa_0/2}\left(\overline{Q_{1/2}}\right)}\leq C(n,a)\left(\|U\|_{L^2(\Q_1^+,|y|^a dXdt)}+\|\psi\|_{H^{4,2}(Q_1)}\right).
$$
\end{enumerate}
\end{theorema}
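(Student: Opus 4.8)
\emph{Overview.} The plan is the one announced in the abstract: first prove that $\p_t U$ is bounded; this reduces \eqref{epb} at each frozen time $t$ to an elliptic thin obstacle problem for $\operatorname{div}_X(|y|^a\nabla_X\cdot)$ with bounded right-hand side, so the elliptic regularity theory applies slicewise; then recover the parabolic H\"older norms by interpolating the uniform-in-$t$ spatial estimates against the Lipschitz-in-$t$ control furnished by $\p_t U\in L^\infty$.

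\emph{Step 1: boundedness of $\p_t U$ (the main obstacle).} After replacing $U$ by $U-\psi$ (i.e.\ subtracting the $y$-independent extension of the obstacle) we reduce to the \emph{zero} thin obstacle, at the cost of an inhomogeneous equation $\La U=|y|^a g$ with $g=\Delta_x\psi-\p_t\psi\in H^{2,1}(Q_1)$; this is where the hypothesis $\psi\in H^{4,2}(Q_1)$ is spent. We then penalize: let $U_\ve$ solve $\La U_\ve=|y|^a g$ in $\Q_1^+$ and $\p_y^a U_\ve=\beta_\ve(U_\ve)$ on $D\times(t_0,T]$, where $\beta_\ve$ is smooth, nondecreasing, $\beta_\ve\le0$, and $\beta_\ve\equiv0$ on $[\ve,\infty)$, with $U_\ve\to U$ as $\ve\to0$. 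Differentiating in $t$, the function $v_\ve=\p_t U_\ve$ solves $\La v_\ve=|y|^a\p_t g$ together with the Robin-type condition $\p_y^a v_\ve=\beta_\ve'(U_\ve)\,v_\ve$ on $D\times(t_0,T]$, whose coefficient $\beta_\ve'\ge0$ has the \emph{favorable} sign. Combining a Caccioppoli estimate for $v_\ve$ --- uniform in $\ve$ by way of a preliminary energy estimate for $U_\ve$ --- with a Moser iteration for the $A_2$-weighted operator $\La$ in the spirit of \cite{CSe}, in which the good-sign boundary term only helps, bounds $\|v_\ve\|_{L^\infty(\Q_{3/4}^+)}$ by $\|v_\ve\|_{L^2(\Q_1^+,|y|^adXdt)}+\|\p_t g\|_{L^\infty}$, and hence by $\|U\|_{L^2(\Q_1^+,|y|^adXdt)}+\|\psi\|_{H^{4,2}(Q_1)}$. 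Letting $\ve\to0$ gives $\p_t U\in L^\infty(\Q_{3/4}^+)$ with the asserted bound. I expect this to be the heart of the matter: carrying the maximum principle / Moser iteration through the singular-degenerate weight $|y|^a$, and making the energy estimates for $v_\ve$ uniform in the penalization parameter with only the stated regularity of $\psi$, are the delicate points; the remainder of the proof is assembled from known elliptic results.

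\emph{Step 2: slicewise elliptic regularity.} With $\p_t U\in L^\infty$ in hand, rewriting $\La U=0$ as $\operatorname{div}_X(|y|^a\nabla_X U)=|y|^a\p_t U$ shows that for a.e.\ (and, once the estimates are in place, every) $t$ the slice $X\mapsto U(X,t)$ solves the elliptic Signorini problem for $\operatorname{div}(|y|^a\nabla\cdot)$ with bounded inhomogeneity and obstacle $\psi(\cdot,t)$, which is of class $C^{2,\alpha}$ since $\psi\in H^{4,2}(Q_1)$. Invoking the elliptic theory --- the $C^{1+\gamma}_a$ boundary regularity for the extension operator, giving the estimate relevant to part~(i), and the Athanasopoulos--Caffarelli--Salsa optimal $C^{1,s}$ regularity of the solution, refined near regular free boundary points by the elliptic epiperimetric inequality, giving the estimate relevant to part~(ii), in both cases with the bounded inhomogeneity absorbed as a lower-order term --- yields, uniformly in $t$, $\|U(\cdot,t)\|_{C^{1+\gamma}_a(\overline{\B_{1/2}^+})}\le C$ and, uniformly also in $y\in[0,1/2]$, $\|U(\cdot,y,t)\|_{C^{1,s}(\overline{B_{1/2}})}\le C$, with $C$ controlled by $\|U\|_{L^2(\Q_1^+,|y|^adXdt)}+\|\psi\|_{H^{4,2}(Q_1)}$.

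\emph{Step 3: interpolation in time.} Since $\p_t U\in L^\infty$, $U$ is Lipschitz in $t$, so $U(\cdot,y,\cdot)\in C^{(1+\gamma)/2}_t$ and $U(\cdot,y,\cdot)\in C^{(1+s)/2}_t$ locally, these exponents being $<1$. For the first-order quantities we use the elementary interpolation: if $w$ has uniformly bounded $C^{1+\alpha}_x$ norm and $\|w(\cdot,t)-w(\cdot,\tau)\|_{L^\infty}\le M|t-\tau|$, then estimating $\nabla_x w(x,t)-\nabla_x w(x,\tau)$ by second $x$-differences over a ball of radius $r$ and choosing $r\sim|t-\tau|^{1/(1+\alpha)}$ gives $|\nabla_x w(x,t)-\nabla_x w(x,\tau)|\le CM|t-\tau|^{\alpha/(1+\alpha)}\le CM|t-\tau|^{\alpha/2}$, the last inequality because $\alpha\le1$. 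Applying this with $\alpha=\gamma$ to $\nabla_x U$ and, analogously near $\{y=0\}$ (using in addition the interior estimates for the nondegenerate equation in $\{y>0\}$ together with the bound on $\p_t U$), to $|y|^a\p_y U$, we obtain $U\in H_a^{1+\gamma,(1+\gamma)/2}(\overline{\Q_{1/2}^+})$, which with Step~1 is part~(i); applying it with $\alpha=s$ (so that $1+\alpha=\kappa_0$) to $\nabla_x U(\cdot,y,\cdot)$ we obtain $U(\cdot,y,\cdot)\in H^{\kappa_0,\kappa_0/2}(\overline{Q_{1/2}})$ uniformly in $y\in[0,1/2]$, which is part~(ii). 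Throughout, the constants trace back through the Caccioppoli estimates to the two quantities on the right-hand sides of the asserted inequalities.
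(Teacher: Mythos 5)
Your overall strategy --- prove $\p_t U\in L^\infty$, slice in time, apply elliptic theory, recover parabolic H\"older norms --- is exactly the structure of the paper's proof. There are, however, two substantive deviations in implementation, one of which is a genuine gap.

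\emph{Step 1.} You establish $\p_t U\in L^\infty$ by penalizing, differentiating the penalized problem in $t$, and running a Moser iteration against the resulting Robin condition $\p_y^a v_\ve=\beta_\ve'(U_\ve)v_\ve$, whose sign is favorable. The paper's route (after \cite{PZ}) is different and cleaner: it works with the time difference quotient $U^h$ directly in the variational inequality, using test functions $\eta_\epsilon=\eta\,\chi(U^h/\epsilon)$, to show $\La\bigl((U^h)^\pm\bigr)\le |y|^a(F^h)^\pm$ and then invokes the Chiarenza--Serapioni subsolution estimate. Your penalization route is plausible but leaves unjustified the regularity of $U_\ve$ needed to differentiate the penalized boundary condition in $t$ classically, and the uniform-in-$\ve$ local $L^2$ control of $\p_t U_\ve$ that starts the iteration. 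These are exactly the points the difference-quotient argument avoids.

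\emph{Step 2.} The mention of the epiperimetric inequality is spurious here: the optimal $C^{1,s}$ slicewise bound (hence part (ii)) follows from the elliptic theory for bounded right-hand sides, as in \cite{CDS}; the epiperimetric inequality enters only in Theorem~II (free-boundary regularity), not in Theorem~I. This does not damage your argument since you also cite the $C^{1,s}$ estimate directly, but it is a misattribution.

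\emph{Step 3 --- the real gap.} Your interpolation argument correctly treats the time-H\"older regularity of $\nabla_x U$: bounded $C^{1+\alpha}_x$ norm plus $U$ Lipschitz in $t$ yields $\nabla_x U\in C^{\alpha/(1+\alpha)}_t$ via second $x$-differences, and $\alpha/(1+\alpha)\ge\alpha/2$ closes that case. But the same device does \emph{not} transfer to $|y|^a\p_y U$: the interpolation uses that the function whose gradient you estimate is Lipschitz in $t$, and while $U$ is Lipschitz in $t$, $|y|^a\p_y U$ is not a spatial gradient of a bounded function, and $\p_y U$ itself blows up like $|y|^{-a}$ near the thin set, so neither the second-difference trick in $y$ nor the interior estimates in $\{y>0\}$ (which degenerate as $y\to 0$) give a uniform modulus of continuity in $t$ up to $\{y=0\}$. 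The paper's proof of part~(i) is precisely organized to close this: it introduces the extended free boundary $\Gamma_*(U)$ in \eqref{efb}, proves the growth estimate \eqref{growth} near it, and then derives the parabolic H\"older bounds on $\nabla_x U$ and $y^aU_y$ by a dyadic argument in the parabolic distance $d(X,t)$ to $\Gamma_*(U)$, combined with the linear estimates of Proposition~\ref{ac1}, Remark~\ref{conh}, and Lemma~\ref{odd}. Without an argument of this type --- or an equivalent substitute that controls $|y|^a\p_y U$ in $t$ uniformly up to $\{y=0\}$ --- part~(i) remains unproved.
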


We explicitly observe here that the estimates above are purely local in nature and do not depend on the initial and lateral boundary data
$\phi_0$ and $g$, in contrast to the results in \cite{ACM}. 
We also
note that while the estimate in part (ii) is optimal in $x$ variables,
part (i) gives a joint regularity in $(X,t)$ variables, which is necessary in compactness arguments.

To state our next result, we need to introduce the notion of
\emph{regular free boundary points}. Two equivalent definitions of such points based on parabolic and elliptic Almgren-Poon type 
frequency functions are given in Section~\ref{S:rfb}, see Definitions~\ref{reg} and \ref{D:reg} (as well as Lemma~\ref{equiv} for their equivalence).
In more elementary terms, we say that $(x_0,t_0)\in
\Gamma(U)$ is regular if
\begin{align*}
L_{\rm par}&=\limsup_{r \to 0} \frac{ \|U(\cdot,0,\cdot)-\psi\|_{L^{\infty}(Q_r(x_0, t_0))}}{r^{\kappa_0}},\quad
             \text{or equivalently,}\\
L_{\rm ell}&=\limsup_{r \to 0} \frac{
             \|U(\cdot,0,t_0)-\psi(\cdot,t_0)\|_{L^{\infty}(B_r(x_0))}}{r^{\kappa_0}}
\end{align*}
is bounded away from $0$ and $\infty$. We denote the set of all
regular free boundary points $\Gamma_{\kappa_0}(U)$ and call it the
\emph{regular set}.

\begin{theorema}[Smoothness of the regular set]\label{T:regreg}
Let $U$ be as in Theorem~\ref{T:or}. Then $\Gamma_{\kappa_0}(U)$ is a
relatively open subset of $\Gamma(U)$ and is locally given as a graph
$$
x_n=g(x_1,\ldots,x_{n-1},t)
$$
with $g\in H^{1+\gamma,(1+\gamma)/2}$, after a possible rotation of
coordinate axes in the thin space $\R^n$.
\end{theorema}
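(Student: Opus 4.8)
The plan is to follow the classical Athanosopoulos--Caffarelli--Salsa scheme, exploiting the key structural advantage already secured by Theorem~\ref{T:or}(i): the time-derivative $\p_t U$ is bounded. This means that at each fixed time slice $t=t_0$ the function $U(\cdot,\cdot,t_0)$ solves an elliptic Signorini problem with a right-hand side that is merely bounded (coming from $\p_t U$ and the obstacle), so that the rich elliptic theory for $\mathscr L_a$ (monotonicity of the elliptic Almgren--Poon frequency, classification of blowups, the epiperimetric inequality, all invoked in the excerpt) applies uniformly in $t_0$. First I would establish, via Definitions~\ref{reg}/\ref{D:reg} and Lemma~\ref{equiv}, that at a regular point $(x_0,t_0)$ the rescalings $U_{r}(X,t)=\frac{(U-\psi)(x_0+rX,\,t_0+r^2 t)}{r^{\kappa_0}}$ converge, along subsequences, to the unique (up to rotation) model solution of frequency $\kappa_0$, namely the parabolically stationary half-space solution $c\,\operatorname{Re}(x\cdot e + i|y|)^{\kappa_0}$ with $e$ a unit vector in $\R^n$; uniqueness of the blowup is what makes the normal direction $e=e(x_0,t_0)$ well-defined.

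Next I would upgrade this to a \emph{uniform} rate of convergence and continuous dependence of $e(x_0,t_0)$ on the base point. The standard route is the epiperimetric inequality (already cited), which forces the Weiss-type energy to decay at a geometric rate $r^{\gamma}$ for some $\gamma=\gamma(n,a)>0$; combined with a Monneau-type monotonicity argument this yields $\|U_r - U_{e(x_0,t_0)}\|\lesssim r^{\gamma}$, and a triangle-inequality comparison at nearby centers gives $|e(x_0,t_0)-e(x_1,t_1)|\lesssim |(x_0,t_0)-(x_1,t_1)|^{\gamma}$ in the parabolic metric. Openness of $\Gamma_{\kappa_0}(U)$ inside $\Gamma(U)$ follows because the quantity $L_{\rm par}$ (equivalently $L_{\rm ell}$) in the excerpt is, via the monotonicity formula, upper semicontinuous and bounded below on a neighborhood once it is so at one point — the standard compactness/ACF argument. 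At this stage one knows $\Gamma_{\kappa_0}(U)$ is, near $(x_0,t_0)$, contained in a thin parabolic cone around the hyperplane $\{x\cdot e(x_0,t_0)=0\}$, with the cone opening shrinking like $r^{\gamma}$.

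Finally I would convert this into the graph statement. After rotating so that $e(x_0,t_0)=e_n$, the cone-flatness plus Hölder continuity of the normal direction shows the free boundary is trapped between graphs $x_n = g^{\pm}(x',t)$ with $g^+-g^- \to 0$, and a directional-monotonicity argument — $\p_{e'} U \ge 0$ on the coincidence set for every $e'$ in a cone around $e_n$, which is where one uses the bound on $\p_t U$ to push the elliptic directional-monotonicity lemma through slice-by-slice — forces $g^+=g^-=:g$ and gives that $g$ has a well-defined gradient $\nabla_{(x',t)} g$ equal (up to the obvious normalization) to the tangential components of $e(x,t)$. Since $(x,t)\mapsto e(x,t)$ is $H^{\gamma,\gamma/2}$, this identification yields $g\in H^{1+\gamma,(1+\gamma)/2}$ as claimed. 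I expect the main obstacle to be precisely this last \emph{parabolic} upgrade: the blowup analysis and epiperimetric inequality are genuinely elliptic (applied frozen in time), so one must carefully transfer the resulting spatial regularity of the free boundary into joint regularity in $(x',t)$, controlling the time-oscillation of the normal $e(x,t)$; this is where the $L^\infty$ bound on $\p_t U$, together with parabolic interior estimates for $\mathscr L_a$ in the noncoincidence set, does the essential work, and making the dependence of constants uniform across time slices is the delicate point.
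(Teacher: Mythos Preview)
Your overall strategy matches the paper's: freeze time, reduce to an elliptic thin obstacle problem with bounded right-hand side (coming from $\partial_t U$), apply the elliptic Weiss monotonicity and epiperimetric inequality to obtain a uniform power rate of convergence of the homogeneous rescalings to a model solution, deduce that the spatial normal $e_{(x,t)}$ is $H^{\gamma,\gamma/2}$ in the base point, and use a cone/directional-monotonicity argument to get the Lipschitz graph $x_n=g(x',t)$ with $\nabla_{x'}g\in H^{\gamma,\gamma/2}$. Openness via upper semicontinuity of the frequency is also the paper's route.

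The gap is in your last step. You claim $g$ has ``a well-defined gradient $\nabla_{(x',t)} g$ equal \dots\ to the tangential components of $e(x,t)$'' and then conclude $g\in H^{1+\gamma,(1+\gamma)/2}$ from $e\in H^{\gamma,\gamma/2}$. But the blowups are stationary (as you yourself note), so $e_{(x,t)}\in\R^n$ is a purely \emph{spatial} vector: it determines $\nabla_{x'}g$ and says nothing about $\partial_t g$. Knowing $\nabla_{x'}g\in H^{\gamma,\gamma/2}$ does not yield $g\in C_t^{(1+\gamma)/2}$; an independent argument for the time-H\"older continuity of $g$ with exponent strictly larger than $1/2$ is required, and your sketch does not supply one. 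The paper obtains this from a \emph{nondegeneracy estimate}: using the uniform $C^{1+\alpha}_a$ closeness of the rescalings to the nonzero blowup, one shows $U(x,0,t_0)\ge c\,|x_n-g(x',t_0)|^{\kappa_0}$ for $x_n>g(x',t_0)$, with $c>0$ uniform. Combined with $|U_t|\le M$ this gives $U(x',g(x',t_0)+r,0,t)\ge c\,r^{\kappa_0}-M|t-t_0|>0$ whenever $M|t-t_0|<c\,r^{\kappa_0}$, hence $g(x',t)<g(x',t_0)+r$; optimizing in $r$ and symmetrizing yields $|g(x',t)-g(x',t_0)|\le C|t-t_0|^{1/\kappa_0}=C|t-t_0|^{2/(3-a)}$. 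Since $a\in(-1,1)$ the exponent $2/(3-a)$ is strictly larger than $1/2$, which is exactly the missing time regularity.
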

Concerning our approach, we stress that the one in the present paper
differs in a significant way from that in \cite{DGPT}. Here, we make
use of a crucial new information, namely that for a solution $U$ of
\eqref{epb} we have that $\partial_t U$ is locally bounded, see
Theorem~\ref{T:or}. We mention that for the case $a=0$ treated in
\cite{DGPT}, this fact was first established by one of us and Zeller
in \cite{PZ}. The boundedness of $\partial_t U$ allows us to consider,
at every fixed time level $t_0\in (-1,0]$, the elliptic problem \eqref{ell} below
for $u(X) = U(X,t_0)$. Once this reduction is made, we use several
results from the elliptic theory to establish our main
results. Primarily, we take advantage of a monotonicity formula of
Almgren type that improves on that in \cite{CSS} (as it allows for
a bounded, rather than  Lipschitz, right-hand side). We also rely on a
Weiss type monotonicity formula as well as the epiperimetric
inequality in \cite{GPPS}.   
 
While in this paper we restrict ourselves to the study of regular free
boundary points, in the forthcoming paper \cite{BDGP2} we take a more
systematic parabolic approach to the classification of free boundary points
based on an Almgren-Poon type monotonicity formula. In that paper we
also prove a structural theorem on the so-called singular set of the
free boundary with the help of Weiss and Monneau type monotonicity formulas.

The structure of the paper is as follows. In Section~\ref{S:exist} we address the question of existence and uniqueness of local solutions to \eqref{epb}. In Section~\ref{lreg} we prove Theorem \ref{T:or}. In Section~\ref{S:rfb} we recall several results from the elliptic theory (such as a truncated Almgren-type monotonicity formula, the monotonicity of a Weiss-type functional, and an epiperimetric inequality), which we use to establish Theorem \ref{T:regreg}. Finally, in the Appendix we collect some auxiliary results needed in the proof of Theorem \ref{T:or}.

\section{Existence and uniqueness of solutions}
\label{S:exist}

In this section, we address the question of existence and uniqueness of local solutions to \eqref{epb}. This is the content of Theorem~3.4 in \cite{ACM}, the proof of which crucially relies on regularity estimates for the corresponding nonlocal parabolic obstacle problem. Here we provide an alternative proof  independent of such nonlocal regularity estimates, and  which can possibly be of independent interest. For the sake of simplicity, throughout this section we will assume $g=0$.

We first note that, by even reflection in the variable $y$, we can extend $U$ to the whole $\Q_1$. Therefore, it suffices to consider questions of existence and uniqueness for the  following problem
\begin{equation}\label{epb22}
 \begin{cases}
\mathscr L_a U = 0 &\text{in}\ \Q_1^+ \cup \Q_1^-,
\\
\min\{U(x,0,t)-\psi(x,t), -\partial_y^a U(x,0,t)\}=0 &\text{on }Q_1,
\\
U(x,-y,t) = U(x,y,t)&\text{in}\   \Q_1,\\
U(X,-1)= \varphi_0(X)&\text{on $\B_1^+$},
\\
U= 0& \text{on $\partial \B_1^+ \times (-1,0)$}.
\end{cases}
\end{equation}
We also assume that the obstacle $\psi$ be compactly supported in $B_1 \times [-1,0]$, and we indicate with $\tilde \psi$ a compactly supported extension of $\psi$ to  $\B_1 \times [-1,0]$ which is  symmetric in $y$.

Throughout the paper we assume that $\psi$ is at least of parabolic
H\"older class $H^{2,1}$ (we will actually  need $H^{4,2}$ for some results). This hypothesis implies that, if 
\[
F(X,t)  \overset{\rm def}{=} -|y|^{-a} \La  \tilde \psi(X,t),
\]
then $F \in L^\infty(\Rnn\times \R)$.

Our first step consists in reducing  \eqref{epb22} to the case of zero obstacle by introducing the function
\begin{equation}\label{zo}
V(X,t)  = U(X,t) - \tilde \psi(X,t).
\end{equation}
Since $U$ solves \eqref{epb22}, we have in $\Q_1^+ \cup \Q_1^-$
\[
\La V = \La U - \La \tilde \psi = |y|^a F.
\]
We thus see that, if we let $\tilde \vf_0 = \vf_0 - \tilde \psi$, then the function $V$ satisfies the following zero obstacle problem
\begin{equation}\label{pb222}
 \begin{cases}
\La V = |y|^a F & \text{in}\ \Q_1^+ \cup \Q_1^-,
\\
\min\{V(x,0,t),-\partial_y^a V(x,0,t)\}=0 & \text{on}\ Q_1,
\\ 
V(x,-y,t) = V(x,y,t) &\text{in}\ \Q_1,
\\
V(X,-1) = \tilde \vf_0(X) &\text{on $\B_1^+$},
\\
V= 0&\text{on $\partial \B_1^+ \times (-1,0)$}.
\end{cases}
\end{equation}

We next establish existence and uniqueness of solutions to the problem \eqref{pb222} by appropriately formulating it in the framework of variational inequalities of evolution, following the approach in \cite{DL}.

In the Hilbert space $\mathscr V_a(\B_1^+,-1,0) = L^2((-1,0);W^{1,2}(\mathbb{B}_1^+, |y|^a dX))$, we introduce the closed convex subset
\begin{multline*}
\mathscr K=\{v\in \mathscr V_a(\B_1^+,-1,0)\mid\\ v\geq 0\text{ on }Q_1, \ v=0 \text{ on } \partial \mathbb{B}_1^+\times (-1, 0),
\ v(X,-1)=\tilde\vf_0(X)\text{ on } \B_1^+\}.
\end{multline*}
In addition, for $v\in\mathscr K$, we define
\begin{equation*}
\Psi(v)=\int_{B_1}\zeta(v(x))\,dx\quad\text{with}\quad \zeta(s)=
\begin{cases}
0,&{s\geq0}\\
+\infty,&{s<0}.
\end{cases}
\end{equation*}
Assume we are given $F\in L^\infty(\mathbb{Q}_1^+)$, with $\partial_t F\in L^\infty(\mathbb{Q}_1^+)$. We say that $V$ is a weak solution to
\begin{equation}\label{epb333}
 \begin{cases}
\La V = |y|^a F\qquad&\text{in }\mathbb{Q}_1^+ ,\\
\min\{V(x,0,t),-\partial_y^a V(x,t)\}=0 &\text{on }Q_1,\\
V(X,-1)=\tilde\vf_0(X)&\text{on } \B_1^+,\\
V=0\ &\text{on $\partial \B_1^+ \times (-1, 0)$},
\end{cases}
\end{equation}
if $V\in\mathscr K$, $\partial_t V\in L^2(\Q_1^+, |y|^adXdt)$ and
it satisfies for a.e.\ $t$ the following variational inequality
\begin{equation}\label{epb333b}
\begin{multlined}
\int_{\mathbb{B}_1^+} \partial_t V(v -V)\,|y|^a dX+\int_{\mathbb{B}_1^+}
\langle \nabla_X V,\nabla_X(v -V)\rangle
|y|^adX+\Psi(v)-\Psi(V)\\\geq\int_{\mathbb{B}_1^+} F(v-V)\ |y|^adX,
\end{multlined}
\end{equation}
for all $v\in \mathscr V_a(\B_1^+, -1, 0)$ such that $v=0$ on $(\partial \B_1)^+\times(-1,0)$.

We approximate \eqref{epb333} with the following penalization problem
\begin{equation}\label{pen1}
\begin{cases}
\La V_\ve = |y|^a F_{\ve},&\text{in}\ \mathbb{Q}_1^+,
\\
\partial_y^a V_\epsilon(x,0,t)=\beta_\ve(V_\ve),&\text{for}\ (x,t)\in Q_1,
\\
V_\ve(X,-1)=\tilde\vf_0(X)&\text{on } \B_1^+,
\\
V_\ve=0\ &\text{on $\partial \B_1^+ \times (-1, 0)$},
\end{cases}
\end{equation}
where $F_{\ve}$  is a mollification  of  $F$  and  the penalty function $\beta_\ve\in C^{0,1}(\R)$ is given by
\begin{equation*}
\beta_\ve(s)=
\begin{cases}
\ve+\frac{s}{\ve},&s\leq -2\ve^2,\\
\frac{s}{2\ve},&-2\ve^2<s<0,\\
0,\qquad &s\geq 0.
\end{cases}
\end{equation*}
Clearly, $V_\ve$ is a solution to \eqref{pen1} if, and only if, it satisfies for a.e.\ $t$
\begin{equation}\label{varp}
\begin{multlined}
\int_{\mathbb{B}_1^+} \partial_t V_\ve(v -V_\ve)\ |y|^a
dX+\int_{\mathbb{B}_1^+} \langle\nabla_X V_\ve,\nabla_X(v -V_\ve)\rangle
|y|^adX+\Psi_\ve(v)-\Psi_\ve(V_\ve)\\\geq\int_{\mathbb{B}_1^+}
F_\ve(v-V_\ve)\ |y|^adX
\end{multlined}
\end{equation}
for all $v\in \mathscr V_a(\B_1^+, -1, 0)$ such that $v=0$ on $(\partial \B_1)^+\times(-1,0)$.  Here
\begin{equation*}
\Psi_\ve(v)=\int_{B_1}\zeta_\ve(v(x))\,dx\quad\text{with}\quad
\zeta_\ve(s)=
\begin{cases}
\ve s+\frac{s^2}{2\ve}+\ve^3,&s\leq -2\ve^2,\\
\frac{s^2}{4\ve},&-2\ve^2<s<0,\\
0,\qquad&s\geq 0.
\end{cases}
\end{equation*}
We explicitly observe that the  characterization of \eqref{pen1} in terms of  the variational inequality \eqref{varp} crucially uses the convexity of $\zeta_\ve$. For the  existence of solutions to the penalized problem, we refer to Section~5.6 in \cite{DL}. 

It also  follows immediately from the definitions that, for any $v\in \mathscr V_a(\B_1^+,-1,0)$ and any subsequence $\ve_j\to 0$ as $j\to\infty$, one has
\begin{equation}\label{cl1}
\int_{-1}^0 \Psi_{\ve_j}(v(t))dt\to \int_{-1}^0 \Psi(v(t))dt, \quad j\to\infty.
\end{equation}
We now proceed to show that if $v_{\ve_j}\to v$ and $\partial_t v_{\ve_j}\to \partial_t v$ weakly in $\mathscr V_a(\B_1^+,-1,0)$ as $j\to\infty$, and $\int_{-1}^0 \Psi_{\ve_j}(v_{\ve_j})dt\leq C$ (for some positive constant $C$ independent of $j$), then
\begin{equation}\label{claim1}
\liminf_{j\to\infty} \int_{-1}^0 \Psi_{\ve_j}(v_{\ve_j})dt\geq \int_{-1}^0 \Psi(v)dt.
\end{equation}
We begin by observing that, thanks to \cite[Theorem~2.8]{Ne}, the weak convergence of $v_{\ve_j}$ and $\partial_t v_{\ve_j}$ yields the strong convergence of $v_{\ve_j}$to $v$ in  $L^2(Q_1 )$. We define
\begin{equation*}
M(s)=\begin{cases}
\frac{s^2}{4},&s<0,\\
0,&s\geq 0.
\end{cases}
\end{equation*}
We then have
\begin{equation*}
C\geq \int_{-1}^0 \Psi_{\ve_j}(v_{\ve_j})dt=\int_{-1}^0\int_{B_1}\zeta_{\ve_j}(v_{\ve_j})\,dxdt\geq \frac{1}{\ve_j}\int_{-1}^0\int_{B_1}M(v_{\ve_j})\,dxdt,
\end{equation*}
from which we infer
\begin{equation*}
\int_{-1}^0\int_{B_1}M(v_{\ve_j})\,dxdt\to 0, \quad j\to\infty.
\end{equation*}
On the other hand,  it follows from the strong convergence in $L^2(Q_1)$ that
\begin{equation*}
\int_{-1}^0\int_{B_1}M(v_{\ve_j})\,dxdt\to \int_{-1}^0\int_{B_1}M(v)\,dxdt, \quad j\to\infty,
\end{equation*}
which in turn gives
\begin{equation*}
\int_{-1}^0\int_{B_1}M(v)\,dxdt=0.
\end{equation*}
Hence, $M(v)=0$ and  consequently $v\geq 0$ a.e.\ in $B_1$. This yields the validity of \eqref{claim1}. We have thus shown that the assumptions in Theorems 5.1 and 5.2 in \cite{DL} are satisfied. Hence,  we can invoke such theorems to obtain the following existence and uniqueness result.
\begin{thrm}\label{Ext}
Given $F\in L^\infty(\mathbb{Q}_1^+)$, with $\partial_t F\in L^\infty(\mathbb{Q}_1^+)$, and  $\tilde \phi_0 \in W_\infty^{2,1}(\mathbb{B}_1^+)$, there exists a unique function
$$
V\in \mathscr V_a(\B_1^+,0,1),\quad\text{with}\quad\partial_t V\in \mathscr V_a(\B_1^+,0,1)\cap L^\infty(0,1;L^2(\mathbb{B}_1^+, |y|^a dX)),
$$
which is weak solution of \eqref{epb333}. In addition, if $V_{\ve_j}$
is a solution of \eqref{pen1} for $\epsilon_j\to 0$, then $V_{\ve_j}\to V$ weakly in $\mathscr V_a(\Q_1^+,0,1)$, and $\partial_t V_{\ve_j}\to \partial_t V$ weakly in $\mathscr V_a(\Q_1^+,0,1)$ and star-weakly in $L^\infty(0,1;L^2(\mathbb{B}_1^+, |y|^a dX))$.
\end{thrm}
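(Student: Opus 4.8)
The plan is to read \eqref{epb333} as a parabolic variational inequality of the form treated by Duvaut--Lions, and to check that the structural hypotheses of \cite[Theorems~5.1 and 5.2]{DL} are met in the present weighted setting, with the penalized problems \eqref{pen1} serving as the approximating scheme required there. Concretely, I would work with the Hilbert triple $W\hookrightarrow H\hookrightarrow W'$, where $H=L^2(\B_1^+,|y|^a dX)$ and $W$ is the closure in $W^{1,2}(\B_1^+,|y|^a dX)$ of the functions vanishing on $(\partial\B_1)^+$, with the continuous bilinear form $a(u,v)=\int_{\B_1^+}\langle\nabla_X u,\nabla_X v\rangle\,|y|^a dX$, and the proper convex functional $\Psi$. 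Since $|y|^a$ is an $A_2$ weight, the weighted Poincar\'e inequality holds on $\B_1^+$ with zero trace on $(\partial\B_1)^+$, so $a(\cdot,\cdot)$ is coercive on $W$; $\Psi$ is convex, proper and lower semicontinuous on $L^2(Q_1)$; and the data obey $F,\p_t F\in L^\infty(\Q_1^+)$, $\tilde\phi_0\in W_\infty^{2,1}(\B_1^+)$ with $\tilde\phi_0\ge 0$ on $B_1$. These are precisely the ingredients used in \cite{DL}, and the convergences \eqref{cl1} and \eqref{claim1} established above supply the $\Gamma$-type convergence $\Psi_{\ve_j}\to\Psi$ that legitimizes the penalization. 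Uniqueness I would obtain by the classical comparison argument: given two solutions $V_1,V_2$, insert $v=V_2(t)$ into \eqref{epb333b} for $V_1$ and $v=V_1(t)$ into the one for $V_2$, add, cancel the $\Psi$ terms, drop the nonnegative quantity $a(V_1-V_2,V_1-V_2)$, and conclude $\tfrac{d}{dt}\|V_1-V_2\|_H^2\le 0$ a.e., whence $V_1\equiv V_2$ because $V_1(\cdot,-1)=V_2(\cdot,-1)=\tilde\phi_0$.

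For existence, the heart of the matter is a pair of $\ve$-uniform a priori bounds for the solutions $V_\ve$ of \eqref{pen1} (whose solvability is recalled from \cite[Section~5.6]{DL}). First, testing \eqref{varp} with $v=0$, using $\Psi_\ve\ge 0$, $\Psi_\ve(0)=0$, and Gronwall, gives $\|V_\ve\|_{\mathscr V_a(\B_1^+,-1,0)}\le C$ and $\int_{-1}^0\Psi_\ve(V_\ve)\,dt\le C$ with $C$ independent of $\ve$. Second --- and this is the estimate responsible for the extra time regularity asserted in the theorem --- I would differentiate \eqref{pen1} in $t$ (rigorously, via time difference quotients, using that $\beta_\ve$ is Lipschitz and monotone) and test the equation satisfied by $\p_t V_\ve$ with $\p_t V_\ve$ itself; since $\beta_\ve'\ge 0$, the boundary term $\int_{B_1}\beta_\ve'(V_\ve)(\p_t V_\ve)^2$ has the right sign, and then the uniform bound $\|\p_t F_\ve\|_{L^\infty}\le\|\p_t F\|_{L^\infty}$ together with a uniform bound for the initial value $\p_t V_\ve(\cdot,-1)$ in $H$ --- read off from $\La V_\ve(\cdot,-1)=|y|^a F_\ve(\cdot,-1)$ with $V_\ve(\cdot,-1)=\tilde\phi_0$, and controlled thanks to $\tilde\phi_0\in W_\infty^{2,1}(\B_1^+)$ --- propagate by Gronwall into a uniform bound for $\p_t V_\ve$ in $\mathscr V_a(\B_1^+,-1,0)\cap L^\infty(-1,0;H)$. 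Extracting weak-$\ast$ limits along a subsequence $\ve_j\to 0$ yields $V$ with $V_{\ve_j}\rightharpoonup V$ and $\p_t V_{\ve_j}\rightharpoonup\p_t V$ in $\mathscr V_a$ and $\p_t V_{\ve_j}\to\p_t V$ star-weakly in $L^\infty(-1,0;H)$, and the claimed regularity of $V$ follows from lower semicontinuity of the norms.

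It then remains to pass to the limit in \eqref{varp} to obtain \eqref{epb333b}. The linear terms converge by the weak convergences just mentioned; $\Psi_{\ve_j}(v)\to\Psi(v)$ by \eqref{cl1}; and $\liminf_j\Psi_{\ve_j}(V_{\ve_j})\ge\Psi(V)$ by \eqref{claim1}. The step I expect to be the genuine obstacle is exactly \eqref{claim1}, which cannot come from weak convergence alone: one must first upgrade to strong convergence of the traces $V_{\ve_j}\to V$ in $L^2(Q_1)$ --- this is where the Aubin--Lions-type compactness \cite[Theorem~2.8]{Ne} is used, fed by the uniform bounds on $V_{\ve_j}$ and $\p_t V_{\ve_j}$ --- and then run the argument given above, namely $\ve_j^{-1}\int_{Q_1}M(V_{\ve_j})\to 0$ forces $M(V)=0$, i.e.\ $V\ge 0$ a.e.\ on $B_1$, so $\Psi(V)=0\le\liminf_j\Psi_{\ve_j}(V_{\ve_j})$. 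With these three facts, taking $\liminf_{j\to\infty}$ in \eqref{varp} produces \eqref{epb333b} for $V\in\mathscr K$, so $V$ is the desired weak solution; uniqueness then shows that the whole family $(V_\ve)$, and not merely a subsequence, converges, which is the last assertion of the theorem.
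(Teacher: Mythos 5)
Your proposal is correct and follows essentially the same route as the paper: cast \eqref{epb333} as a Duvaut--Lions parabolic variational inequality, approximate by the penalized problems \eqref{pen1}, and verify the hypotheses of \cite[Theorems~5.1 and~5.2]{DL}, with the genuine technical content being the two convergence facts \eqref{cl1} and \eqref{claim1}, the latter requiring the Aubin--Lions/Nekvinda compactness \cite[Theorem~2.8]{Ne} to upgrade the weak convergence of the traces to strong $L^2(Q_1)$ convergence. The only difference is stylistic: where the paper invokes the Duvaut--Lions theorems as a black box once their hypotheses have been checked, you unpack their inner workings (the $\ve$-uniform energy estimate from testing with $v=0$, the time-differentiated estimate exploiting $\beta_\ve'\ge 0$ and the initial-data regularity $\tilde\phi_0\in W_\infty^{2,1}$, and the direct comparison argument for uniqueness), but these are precisely the steps the cited theorems carry out, so the mathematical content coincides.
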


\begin{rmrk}
In view of our discussion above,  the corresponding existence and uniqueness result for the original thin obstacle problem as in \eqref{epb22} also follows.
\end{rmrk}

\section{Reduction to an elliptic thin obstacle problem and localized estimates}\label{lreg}

In this section we assume that the function $U$ be a solution in $\Q_1^+$ to the variational problem \eqref{epb333} with zero obstacle but with possibly nonzero lateral boundary conditions. Throughout, we will indicate with 
\[
\Lambda(U) = \{(x,t)\in Q_1\mid U(x,t,0) = 0\},
\]
the coincidence set of $U$, and with $\Gamma(U) = \p_{Q_1}\Lambda(U)$ its free boundary. On the right-hand side we assume that $F\in L^\infty(\mathbb{Q}_1^+)$ and $\partial_t F\in L^\infty(\mathbb{Q}_1^+)$. We  first establish optimal regularity estimates  by reduction to an elliptic thin obstacle problem. Subsequently, we prove localized  regularity estimates for the derivatives of $U$ independent of the boundary conditions. Such local estimates are critical in the blowup analysis in Section~6 in \cite{BDGP2}, where the structure of the singular set is studied. The reader should be aware that we will often pass from a problem in $\Q_1^+$ to one in $\Q_1$, while keeping the same notation for the data of the problem. Whenever we do so, we are thinking of having extended the relevant functions to the whole of $\Q_1$ by even reflection in $y$. The same applies when we consider a time-independent problem in $\B_1^+$ and pass to one in $\B_1$.

\subsection{Optimal regularity  estimate}\label{SS:optreg}

In this subsection we establish an optimal regularity estimate  for  $U(\cdot, y, \cdot)$ when considered as a function of $(x,t)$. Such a result is analogous to that in Corollary 6.10 in \cite{CSS}.

We start by establishing the local boundedness of $U_t$. We mainly
follow the approach \cite{PZ}. For a small $h>0$, consider the quantities
\begin{align*}
U^h(X,t)&=\frac{U(X,t)-U(X,t-h)}{h},\\
F^h(X,t)&=\frac{F(X,t)-F(X,t-h)}{h}.
\end{align*}

\medskip
\noindent
\emph{Claim:} The positive and negative parts of
  $U^h$ satisfy
\begin{equation}\label{re}
\La((U^h)^\pm)\leq |y|^a(F^h)^\pm\quad\text{in }\mathbb{Q}_{3/4}.
\end{equation}

We use the weak formulation of the thin obstacle problem in terms of variational inequalities. Thus, if
\[
\mathscr K_U=\{v\in \mathscr V_a(\B_1^+,-1,0)\mid v\geq 0\text{ on }Q_{3/4}, \ v=U \text{ on } (\partial_p \Q_{3/4})^+\},
\]
then $U\in\mathscr K_U$, $U_t\in L^2(\Q_{3/4}^+, |y|^a dXdt)$ and for a.e.\ $t\in(-(3/4)^2,0]$
\begin{equation}\label{epb9999}
\begin{multlined}
\int_{\mathbb{B}_{3/4}^+}
\langle \nabla_X V,\nabla_X(v -U)\rangle
|y|^adX+\int_{\mathbb{B}_{3/4}^+} U_t(v -U)\,|y|^a dX\\\geq\int_{\mathbb{B}_{3/4}^+} F(v-U)\ |y|^adX,
\end{multlined}
\end{equation}
for any $v\in \mathscr K_U$. 
To proceed, let $\chi\in C^\infty(\R)$ be such that
$$
\chi'\geq 0\text{ on }\R,\quad \chi=0\text{ on }(-\infty,1],\quad
\chi=1\text{ on }[2,\infty).
$$
Then for a nonnegative $\eta\in C_0^\infty(\Q_{3/4})$ and $\epsilon>0$ we let
$$
\eta_\epsilon=\eta\,\chi (U^h/\epsilon).
$$
We next note that if $\tau>0$ is a small number such that $\tau \eta
<\epsilon$ in $\Q_{3/4}$, then $v=U\pm \tau \eta_\epsilon\in \mathscr K_U$
and hence from \eqref{epb9999} we will have for a.e.\ $t\in(-(3/4)^2,0]$
$$
\int_{\B_{3/4}^+}\langle \nabla_X U,\nabla_X \eta_\epsilon\rangle |y|^a dX
+ \int_{\B_{3/4}^+} U_t\,\eta_\epsilon\,|y|^a dX=\int_{\B_{3/4}^+}
F\eta_\epsilon\,|y|^adX.
$$
On the other hand, writing the variational inequality similar to \eqref{epb9999} for the time shift $U(\cdot,\cdot-h)$ and taking $v(X,t)=U(X,t-h)+\eta_\epsilon(X,t)\in \mathscr
K_{U(\cdot,\cdot-h)}$, we have for a.e.\ $t\in(-(3/4)^2,0]$
$$
\begin{multlined}\int_{\B_{3/4}^+}\langle \nabla_X U(\cdot,t-h),\nabla_X \eta_\epsilon\rangle |y|^a dX
+ \int_{\B_{3/4}^+} U_t(\cdot,t-h)\,\eta_\epsilon\,|y|^a dX\\\geq\int_{\B_{3/4}^+}
F(\cdot,t-h)\eta_\epsilon\,|y|^a dX
\end{multlined}
$$
and hence, taking the difference, we obtain
$$
\int_{\B_{3/4}^+}\langle \nabla_X U^h,\nabla_X \eta_\epsilon\rangle |y|^a dX
+ \int_{\B_{3/4}^+} U_t^h\,\eta_\epsilon\,|y|^a dX\leq \int_{\B_{3/4}^+}
F^h\eta_\epsilon\,|y|^adX.
$$
Now, noticing that 
$$
\nabla_X\eta_\epsilon=\nabla_X\eta\, \chi(U^h/\epsilon)+\frac{1}{\epsilon}\eta\,
\chi'(U^h/\epsilon)\nabla_X U^h, 
$$
we can infer
$$
\begin{multlined}
\int_{\B_{3/4}^+}\langle \nabla_X U^h,\nabla_X \eta\rangle
\chi(U^h/\epsilon)|y|^a dX
+ \int_{\B_{3/4}^+} U_t^h\,\eta\,\chi(U^h/\epsilon)|y|^a dX\\ \leq \int_{\B_{3/4}^+}
(F^h)^+\eta\,\chi(U^h/\epsilon)|y|^a dX
\end{multlined}
$$
and passing to the limit as $\epsilon\to 0$, with the help of
dominated convergence theorem, we obtain that for a.e.\ $t\in(-(3/4)^2,0]$
$$
\int_{\B_{3/4}^+}\langle \nabla_X (U^h)^+,\nabla_X \eta\rangle|y|^a dX
+ \int_{\B_{3/4}^+} (U^h)^+_t\,\eta\,|y|^a dX\leq \int_{\B_{3/4}^+}
(F^h)^+\eta\,|y|^adX.
$$
As the nonnegative test function $\eta\in C^\infty(\Q_{3/4})$ was
arbitrary, the above inequality implies that
$$
\mathscr L_a ((U^h)^+)\leq |y|^a(F^h)^+\quad\text{in }\Q_{3/4}.
$$
Using a similar argument, we also obtain that
$$
\mathscr L_a ((U^h)^-)\leq |y|^a(F^h)^-\quad\text{in }\Q_{3/4}.
$$
This establishes \eqref{re}.

Once we have \eqref{re}, by first applying the subsolution estimate in \cite{CSe}, and then letting $h \to 0$, we infer for some constant $C=C(n,a)>0$,
\begin{equation}\label{p1}
\|U_t\|_{L^{\infty}(\Q_{1/2}^+)} \leq C (\|U_t\|_{L^2(\Q_{3/4}^+,|y|^a dXdt)} + \|F_t\|_{L^{\infty}(\Q_{3/4}^+)}).
\end{equation}
This proves the local boundedness of the time derivative of $U$. 

With this information in hand, if we let $\tilde F= U_t + F$, we infer that at each fixed time $t$, $U(\cdot,t)$ solves the following elliptic thin obstacle problem 
\begin{equation}\label{ell}
\begin{cases}
L_a U \overset{\rm def}{=} \operatorname{div}\ (y^a \nabla U)= y^a \tilde
F & \text{in}\ \B_1^+,
\\
\min\{U(x,0,t),-\partial_y^a U(x,0,t)\}=0 &\text{on }B_1.
\end{cases}
\end{equation}
The reader should bear in mind that for every $t>0$ one has $\tilde F(\cdot,-y,t) = \tilde F(\cdot,y,t)$. 

\medskip

We now proceed with the proof of the optimal regularity estimate. Since for a given $t_0$ the function $\tilde F(\cdot, t_0)$ is bounded, we are precisely in the improved situation considered in Theorem~6.1 and 6.2 in \cite{CDS} (note that \cite{CSS} instead requires a Lipschitz right-hand side) and hence from the results there, we can infer that  $U(\cdot, 0, t_0)$ is in $C^{1,\frac{1-a}2}$ at every time level $t_0$. From this and an argument using cut-offs  as in the proof of Lemma~4.1 in \cite{CSS}, we conclude that $U(\cdot, y, t_0)$ is in $C^{1,\frac{1-a}2}$ for $y<1/2$. Coupled with the boundedness of $U_t$, this allows to conclude that $U(\cdot, y,\cdot) \in H^{\frac{3-a}2, \frac{3-a}4}$ for $y <1/2$. To see this, we note that 
\begin{align*}
& |U(x,\cdot,t) - U(0,\cdot,0) - \langle\nabla_x
                 U(0,\cdot,0),x\rangle|\\
&\qquad \leq |U(x,\cdot,0) - U(0,\cdot,0) - \langle\nabla_x U(0,\cdot,0),x\rangle|+ |U(x,\cdot,t) - U(x,\cdot,0)| \\
&\qquad\leq C|x|^{\frac{3-a}2} + C |t| \leq C(|x| + |t|^{1/2})^{\frac{3-a}2}.
\end{align*}
From this $\frac{3-a}2$-order of approximation at every point, a standard argument shows $
\nabla_x U \in H^{s, s/2}$. This proves the optimal regularity estimate. We summarize all of this in the following. 

\begin{thrm}\label{opt}
Let $U$ be a solution to 
\[
\begin{cases}
\La U = |y|^a F &\text{in }\Q_1^{+},\\
\min\{U(x,0,t),-\partial_y^a U(x,0,t)\}=0 &\text{on }Q_1.
\end{cases}
\]  Then for every $y\in[0,1/2]$ we have $U(\cdot,y, \cdot) \in H_{\rm loc}^{\frac{3-a}2, \frac{3-a}4}$.
\end{thrm}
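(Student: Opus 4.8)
\textbf{Proof proposal for Theorem~\ref{opt}.}

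The plan is to combine the three ingredients that have already been assembled: the local boundedness of the time derivative, the elliptic thin obstacle regularity with bounded right-hand side, and a standard cut-off trick to propagate regularity off the thin manifold into the bulk. First I would invoke the bound \eqref{p1}, which gives $U_t \in L^\infty(\Q_{1/2}^+)$; this is the key reduction, since it allows us to freeze the time variable and read \eqref{ell} as a one-parameter family of elliptic thin obstacle problems $L_a U(\cdot,t_0) = y^a \tilde F(\cdot,t_0)$ with $\tilde F = U_t + F \in L^\infty$ uniformly in $t_0$. Applying the improved elliptic estimates of \cite{CDS} (Theorems~6.1 and 6.2 there, which only require a bounded, rather than Lipschitz, forcing term), we obtain that $U(\cdot,0,t_0) \in C^{1,\frac{1-a}{2}}$ on a slightly smaller ball, with a bound uniform in $t_0$ depending only on $\|U\|_{L^2(\Q_1^+,|y|^adXdt)}$ and $\|\tilde F\|_{L^\infty}$.

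Next I would extend this interior-of-the-thin-manifold regularity to the thick variables, i.e.\ show $U(\cdot,y,t_0) \in C^{1,\frac{1-a}{2}}$ for $y \in [0,1/2)$, uniformly in $t_0$. The mechanism is the argument from the proof of Lemma~4.1 in \cite{CSS}: away from $\{y=0\}$ the equation $L_a U = y^a\tilde F$ is uniformly elliptic with smooth coefficients, so localizing with a cut-off and using standard Schauder (or $L^p$) estimates together with the already-known boundary trace regularity on $\{y=0\}$ upgrades $U$ to $C^{1,\frac{1-a}{2}}$ in a neighborhood of any point with $|y| < 1/2$. This step is essentially bookkeeping once the coefficients' degeneracy is confined to the hyperplane one is staying away from.

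Finally I would assemble the joint space-time H\"older estimate $U(\cdot,y,\cdot) \in H^{\frac{3-a}{2},\frac{3-a}{4}}_{\rm loc}$ for $y < 1/2$, exactly as in the displayed computation preceding the theorem: at a fixed $y$, writing $\kappa_0 = \frac{3-a}{2} = 1+s$, one controls
\[
|U(x,y,t) - U(0,y,0) - \langle \nabla_x U(0,y,0), x\rangle|
\]
by splitting it into a spatial term, bounded by $C|x|^{\kappa_0}$ from the $C^{1,s}$ regularity in $x$ just established, and a temporal term $|U(x,y,t) - U(x,y,0)| \le C|t|$ from the $L^\infty$ bound on $U_t$; the sum is $\le C(|x| + |t|^{1/2})^{\kappa_0}$ since $\kappa_0 > 1$. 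A $\kappa_0$-th order polynomial approximation of this form at every point, with uniformly bounded coefficients, is precisely the definition of membership in the parabolic class $H^{\kappa_0,\kappa_0/2}$, and a standard interpolation-type argument then also yields $\nabla_x U \in H^{s,s/2}$. I expect the main obstacle to be purely one of careful quotation and uniformity tracking: verifying that the elliptic estimates in \cite{CDS} are stated in a form whose constants depend only on $\|\tilde F\|_{L^\infty}$ and a local $L^2$ norm of $U$ (so that the $t_0$-dependence is genuinely uniform), and that the cut-off argument does not lose the sharp exponent $\frac{1-a}{2}$ at the interface $\{y=0\}$; once those are pinned down, the rest is the elementary estimate above.
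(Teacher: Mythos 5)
Your proposal is correct and follows essentially the same route as the paper: boundedness of $U_t$ via \eqref{p1}, reduction to the elliptic thin obstacle problem \eqref{ell} with bounded right-hand side $\tilde F = U_t + F$, the improved elliptic estimates of \cite{CDS}, the cut-off argument from Lemma~4.1 in \cite{CSS}, and the final joint space-time estimate combining the spatial $C^{1,s}$ regularity with the $L^\infty$ bound on $U_t$. The displayed computation you outline is exactly the one the paper uses.
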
  

We emphasize that the elliptic regularity in Theorem~\ref{opt} is optimal because of the following prototypical function
\begin{equation}\label{hatv0}
\hat v_0(X) = \hat v_0(x,y)= c \left(x_n+\sqrt{x_n^2+y^2}\right)^{\frac{1-a}2}\left( x_n-\frac{1-a}2 \sqrt{x_n^2+y^2}\right),
\end{equation}
see \cite{GPPS}. Such $\hat v_0$ is a global solution in $\B_1$ of the problem \eqref{ell} with  $\tilde F\equiv 0$ (this corresponds to a problem with zero obstacle). Note that we have $\hat v_0(x,-y) = \hat v_0(x,y)$, $\hat v_0(x,0) \ge 0$ in $B_1$, and that $\hat v_0$ is homogeneous of degree 
\begin{equation}\label{kappazero}
\kappa_0 = \frac{3-a}2. 
\end{equation}

\subsection{Localized regularity estimates}
In this subsection we obtain localized H\"older estimates in $(X,t)$ for $\nabla_x U$ and $y^a U_y$, up to the thin manifold $\{y=0\}$. We assume that $\nabla_x F$ is bounded and that $F_y= O(y)$ in $\Q_1^+$.  More precisely, we suppose that for some $K>0$ the following bounds hold
\begin{equation}\label{der1}
\|F\|_{L^{\infty}(\Q_1^+)} \le K,\quad \|\nabla_x F\|_{L^{\infty}(\Q_1^+)}  + \|F_t\|_{L^{\infty}(\Q_1^+)}  \leq K,\quad |\partial_y F|  \leq Ky.
\end{equation}
We note that \eqref{der1} is satisfied by  the functions $F_k$ in \cite[Section~3]{BDGP2}. Therefore, our regularity estimates in Theorem~\ref{localized} below can be applied  to situations such as those in \cite[Sections~3 and 6]{BDGP2}. 

\medskip

We proceed as follows.  We first show that $y^a U_y$ is continuous in
$(X,t)$ up to the thin set $\{y=0\}$. Again, from \cite{CDS}, it follows that, at every time level $t$, $y^a U_y$ is H\"older
continuous in $X$ up to $\{y=0\}$. For a more
self-contained proof of this fact, we refer to Theorem~\ref{schauder}
below. Now, recall the $C^{1+\alpha}_a$ norm defined in \eqref{wnorm}. Given $t_0$, let $\{t_j\}$  be a sequence of times converging
to $t_0$. Since for every $j\in \mathbb N$, $U(\cdot,t_j)$ solves an
elliptic thin obstacle problem with uniformly bounded right hand
side, from the elliptic regularity results in
\cite{CDS} we infer that $U(\cdot, t_j)$'s are uniformly bounded
in $C^{1+\alpha}_{a}(\overline{\B_{1/2}^+})$ for some $\alpha>0$. By Ascoli-Arzel\`a we infer that, up to a subsequence, $U(\cdot, t_j) \to
U_0$ in $C^{1+\beta}_{a}(\overline{\B_{1/2}})$ for all $\beta <
\alpha$. Also, away from $\{y=0\}$, since $U$ is a solution to a
uniformly parabolic PDE with bounded right-hand side, by
the De Giorgi-Nash-Moser theory we have that $U(\cdot,y,t_j) \to U(\cdot,y,
t_0)$ in $\{y >0\}$ pointwise, and this allows us to conclude that $U_0
\equiv U(\cdot,y,t_0)$. By the uniqueness of the limit, we can assert
that the whole sequence $U(\cdot,y,t_j) \to U(\cdot,y,t_0)$ in
$C^{1+\beta}_a(\overline{\B_{1/2}^+})$  for all $\beta < \alpha$ and
this in particular implies the continuity of $y^a U_y$ in the variable
$t$ up to $\{y=0\}$.

Having established the continuity of $y^a U_y$, similarly to Section~4 in \cite{DGPT}, we now  define the \emph{extended free boundary} as follows.
\begin{equation}\label{efb}
\Gamma_*(U)= \p_{Q_1} \{ (x,t)\in Q_1 \mid U(x, 0, t)=0, \partial_y^a U(x,0,t)=0\}.
\end{equation}
If $(x_0,t_0) \in \Gamma_*(U)$, then  thanks to the continuity of
$\partial_y^a U$, $\nabla_x U$ and  $U$ on $\{y=0\}$,  we have that at $x_0$ the following facts hold:
\begin{equation}\label{rel}
U(x_0, 0, t_0)=0,\quad \nabla_x U(x_0, 0,t_0)=0, \quad \partial_y^a U (x_0, 0, t_0) =0.
\end{equation}
Keeping in mind both the fact that $U(\cdot,t_0)$ solves the elliptic thin obstacle problem with bounded right hand side,  and \eqref{rel},  from \cite{CDS} it follows
\begin{equation}\label{g1}
\|U(\cdot, t_0)\|_{L^{\infty}(\B_r^+(x_0))} \leq C r^{\frac{3-a}2}.
\end{equation}
In turn, \eqref{g1} coupled with  the boundedness of $U_t$ yields
\begin{equation}\label{growth}
\|U\|_{L^{\infty}(\Q_r^+(x_0, t_0))} \leq C r^{\frac{3-a}2}.
\end{equation}
Next, for  $(X,t) \in \Q_{1/2}^+$, let $d(X,t)$ be the parabolic distance from the extended free boundary $\Gamma_*(U)$. As in the case $a=0$ analyzed in \cite{DGPT}, from the estimate \eqref{growth} it follows in a straightforward way that 
\begin{equation}\label{k1}
|U(X,t)| \leq  C d(X,t)^{\frac{3-a}2}.
\end{equation}
We now consider the intersection $\Q_{d}(X,t) \cap Q_1$, where $d=d(X,t)$. Since there are no points of $\Gamma_{*}(U)$ in this set, we have two possibilities. Either (i) $U>0$ on $\Q_d(X,t) \cap Q_1$; or,
(ii) $U \equiv 0$ on $\Q_d(X,t) \cap Q_1$. 
If (i) occurs, then we have $\partial_y^aU =0$ on $\Q_d(X,t) \cap Q_1$. Thus, we can even reflect across $\{y=0\}$. By scaling the estimate in Proposition~\ref{ac1} and taking Remark \ref{conh} into account, in view of \eqref{k1} we obtain  
\begin{equation}\label{k2}
\begin{aligned}
|\nabla_x U(X, t) | &\leq Cd^{\frac{1-a}2},
\\
|y^a U_y(X, t)| &\leq Cd^{\frac{1+a}2}.
\end{aligned}
\end{equation}
If instead (ii) occurs, then \eqref{k2} follows from the scaled version of the estimate in Lemma~\ref{odd}. 

We now take points $(X^i, t^i) \in \Q_{1/2}^+$, $i=1,2$ and let $d_{i}= d(X^i, t^i)$. We also set $\delta= |(X^1 - X^{2}, t^{1}-t^{2})|$. Without loss of generality, we may assume that $d_1\geq d_2$. There exist two possibilities: (a) $\delta \ge \frac{1}{2} d_1$; or, (b) $\delta < \frac{1}{2} d_1$.
If (a) occurs, it follows from \eqref{k2} that
\[
\begin{aligned}
|\nabla_x U(X^1, t^1) - \nabla_x U(X^2, t^2) | & \leq |\nabla_x U(X^1, t^1)| + |\nabla_x U(X^2, t^2)| \\
 &\leq C (d_1^{\frac{1-a}2} + d_2^{\frac{1-a}2}) \leq C \delta^{{\frac{1-a}2}},
\\
| y^a U_y(X^1, t^1) - y^a U_y(X^2, t^2) | &\leq | y^a U_y(X^1, t^1)| + | y^a U_y(X^2, t^2)| \\
& \leq C (d_1^{\frac{1+a}2} + d_2^{\frac{1+a}2}) \leq C \delta^{\frac{1+a}2}.
\end{aligned}
\]
If instead (b) occurs, then both $(X^{i},t^{i}) \in \Q_{d_1/2}(X^1,t^1)$, and  we have from \eqref{k1}
\begin{equation}\label{tr}
\|U\|_{L^{\infty}(\Q_{d_1/2} ( X^1, t^1))} \leq Cd_1^{\frac{3-a}2}.
\end{equation}
From the scaled version of the estimate in Proposition~\ref{ac1}, or from Lemma~\ref{odd} (depending on whether  $U>0$ in $\Q_{d_1/2} \cap Q_1$ or not), it follows that for $\beta = \min\{\frac{1-a}2,\alpha\}$, with $\alpha$ as in \eqref{ert} in Lemma~\ref{odd}, the following holds 
\[
|\nabla_x U(X^1, t^{1}) - \nabla_x U(X^2, t^2)| \leq 
 \frac{C}{d_1^{1+\beta}}\left(\|U\|_{L^{\infty}(\Q_{d_1/2} ( X^1, t^1))} + d_1^2 K\right) \delta^\beta
\leq C \delta^\beta,
\]
where in the last inequality, we have also used \eqref{tr} and the fact that $\beta \leq  \frac{1-a}2$.  Likewise, for $\gamma=\min\{\frac{1+a}2, \alpha\}$,  we find 
\[
| y^a U_y(X^1, t^1) - y^a U_y(X^2, t^2) | \leq C\delta^{\gamma}.
\]
We can thus finally assert that $U_t \in L^{\infty}_{\rm loc}, \nabla_x U \in H^{\alpha_0, \frac{\alpha_0}{2}}_{\rm loc}$ and $y^a U_y \in H^{\alpha_0, \frac{\alpha_0}{2}}_{\rm loc}$ up to the thin manifold $\{y=0\}$, for some $\alpha_0>0$. Using such H\"older regularity of $\nabla_x U, y^aU_y$, and the boundedness of $U_t$,
we can at this point argue as in the proof of Lemma~5.1 in \cite{BDGP2}, and conclude that the following $W^{2,2}$ type estimate holds for $\rho<1$,
\begin{equation}\label{step1}
\int_{\Q_{\rho}^+} (  |\nabla U|^2  + |\nabla U_{x_i}|^2 + U_t^2 )  |y|^a \leq C(n,\rho) \int_{\Q_1^+} (U^2  +  F^2)  |y|^a,
\end{equation}
Taking the estimate \eqref{p1} into account, from the above discussion and from \eqref{step1}, we finally obtain the following localized regularity estimates. 

\begin{thrm}\label{localized}
Suppose that $F$ satisfy the bounds in \eqref{der1} for some $K>0$. Let $U$ be a solution to 
\[
\begin{cases}
\La U = |y|^a F &\text{in }\Q_1^{+},\\
\min\{U(x,0,t),-\partial_y^a U(x,0,t)\}=0 &\text{on }Q_1.
\end{cases}
\] 
Then the growth estimate as in \eqref{growth} holds near any free boundary point $(x_0, t_0) \in \Gamma_*(U)$. Moreover,  there exists $\alpha_0\in (0,1)$ such that $U$ satisfy the following local estimate 
\[
\|y^a U_y\|_{H^{\alpha_0,
      \frac{\alpha_0}{2}}(\overline{\Q_{1/2}^+})}  + \|\nabla_x
  U\|_{H^{\alpha_0, \frac{\alpha_0}{2}}(\overline{\Q_{1/2}^+})} +
  \|U_t\|_{L^{\infty}(\Q_{1/2}^+)}\leq C
\left(\|U\|_{L^2(\Q_1^+,|y|^a dXdt)} +K\right).
\]
\end{thrm}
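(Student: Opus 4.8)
The plan is to assemble Theorem~\ref{localized} from the chain of estimates already developed in this subsection, organizing them so that all constants depend only on $n$, $a$, and the quantity $K$ from \eqref{der1}. The statement has three components: (1) the growth estimate \eqref{growth} at extended free boundary points, (2) the interior-up-to-$\{y=0\}$ H\"older bounds for $\nabla_x U$ and $y^a U_y$ together with the $L^\infty$ bound on $U_t$, and (3) the fact that the right-hand side of all these bounds can be taken to be $\|U\|_{L^2(\Q_1^+,|y|^a dXdt)} + K$. Parts (1) and (2) have essentially been carried out in the body text above; what remains is to trace the dependence of constants and to absorb the intermediate norms (such as $\|U_t\|_{L^2}$ appearing in \eqref{p1}) into the final right-hand side.

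First I would record that \eqref{growth} is exactly the growth estimate claimed, valid near any $(x_0,t_0)\in\Gamma_*(U)$: this follows by combining \eqref{g1} (the elliptic growth at a fixed time level, from \cite{CDS}, using the vanishing relations \eqref{rel}) with the boundedness of $U_t$ established in \eqref{p1}. Next, I would invoke the continuity of $y^aU_y$ in $(X,t)$ up to $\{y=0\}$ proven via the Ascoli--Arzel\`a/De~Giorgi--Nash--Moser argument above, which legitimizes the definition of $\Gamma_*(U)$ and the pointwise bounds \eqref{k1}--\eqref{k2}. Then the two-point H\"older estimates: for a pair $(X^1,t^1),(X^2,t^2)\in\Q_{1/2}^+$ one splits into the far regime $\delta\ge \tfrac12 d_1$ (handled by the triangle inequality and \eqref{k2}) and the near regime $\delta<\tfrac12 d_1$ (handled by the scaled interior Schauder estimates of Proposition~\ref{ac1} and Lemma~\ref{odd}, using \eqref{tr}), yielding H\"older exponents $\beta=\min\{\tfrac{1-a}2,\alpha\}$ and $\gamma=\min\{\tfrac{1+a}2,\alpha\}$; set $\alpha_0=\min\{\beta,\gamma\}$.

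Finally I would close the estimate. The two-point bounds give the $H^{\alpha_0,\alpha_0/2}$ seminorms of $\nabla_x U$ and $y^aU_y$ on $\overline{\Q_{1/2}^+}$ controlled by $\|U\|_{L^\infty(\Q_{3/4}^+)}+K$ and similar quantities; one then upgrades these to full norms and converts the $L^\infty$ norm of $U$ into an $L^2$ norm by a standard local sup bound (e.g.\ the subsolution estimate of \cite{CSe} applied to $U^\pm$, or simply noting $U$ is a solution of a parabolic equation with bounded RHS away from $\{y=0\}$ combined with the degenerate estimates near it). The $W^{2,2}$-type bound \eqref{step1}, proved as in Lemma~5.1 of \cite{BDGP2}, provides $\|U_t\|_{L^2(\Q_{3/4}^+,|y|^a)}\le C(\|U\|_{L^2(\Q_1^+,|y|^a)}+K)$, which fed into \eqref{p1} gives $\|U_t\|_{L^\infty(\Q_{1/2}^+)}\le C(\|U\|_{L^2(\Q_1^+,|y|^a)}+K)$. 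Chaining these, every occurrence of $\|U_t\|_{L^2}$, $\|U\|_{L^\infty}$, or $K$-type data on the right is absorbed into $C(\|U\|_{L^2(\Q_1^+,|y|^a dXdt)}+K)$, which is the asserted bound.

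The main obstacle I anticipate is bookkeeping rather than any conceptual difficulty: one must verify that the covering/iteration used to pass from the two-point H\"older estimates on dyadic pieces to a global seminorm on $\overline{\Q_{1/2}^+}$ does not degrade the constants, and that the intermediate quantities (the $L^2$ norm of $U_t$, the $L^\infty$ norm of $U$ on $\Q_{3/4}^+$, the weighted $C^{1+\alpha}_a$ norms at fixed time levels) are each controlled by $\|U\|_{L^2(\Q_1^+,|y|^a dXdt)}+K$ with constants depending only on $n$ and $a$ — in particular that the $t$-independence of the elliptic estimates from \cite{CDS} is genuinely uniform. A secondary subtlety is handling the dichotomy $U>0$ versus $U\equiv0$ on $\Q_d\cap Q_1$ uniformly, which is already addressed in the text via the even/odd reflection and the separate invocations of Proposition~\ref{ac1} and Lemma~\ref{odd}.
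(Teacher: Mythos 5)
Your proposal is correct and follows essentially the same route as the paper: growth estimate \eqref{growth} from \eqref{g1} plus boundedness of $U_t$, then the distance-function dichotomy with Proposition~\ref{ac1}/Lemma~\ref{odd} for the two-point H\"older estimates, closed by feeding \eqref{step1} into \eqref{p1} to express the bound in terms of $\|U\|_{L^2(\Q_1^+,|y|^a\,dXdt)}+K$. If anything, your explicit bookkeeping of how the intermediate $\|U_t\|_{L^2}$ and $\|U\|_{L^\infty}$ norms are absorbed is a bit more careful than the paper's terse concluding sentence.
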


\section{Regular free boundary points}\label{S:rfb}

In this section we analyze the so-called regular free boundary points. We begin with the thin obstacle problem \eqref{epb}, where $\psi \in H^{\ell, \ell/2}$ for some $\ell \geq 4$. In view of the reductions in \cite[Section~3]{BDGP2}, \eqref{epb} is in turn equivalent to analyzing the following global problem with zero obstacle,
\begin{equation}\label{e0}
 \begin{cases}
\La U = |y|^a F&\text{in }\Sa_1^+,
\\
\min\{U(x,0,t), -\partial_y^a U(x,0,t)\}=0 &\text{on }S_1,
\end{cases}
\end{equation}
where  $\Sa_1^+= \Rnp \times (-1, 0]$, $S_1 = \Rn\times (-1,0]$, and  $F$ satisfies
\begin{align}\label{fbound}
|F(X,t)| \leq  M |(X,t)|^{\ell -2} &\quad\text{for }(X,t)\in\Sa_1^+,\\
\label{nablafbound}
 |\nabla_X F(X, t)| \leq M |(X,t)|^{\ell -3}&\quad\text{for }(X,t)\in\Q_{1/2}^+,\\
\label{ptfbound}
 |\p_t F(X, t)| \leq M |(X,t)|^{\ell -4}&\quad\text{for }(X,t)\in\Q_{1/2}^+.
\end{align}

We now fix an extended free boundary point of $U$ in \eqref{e0} and, without loss of generality, we assume that it be the origin, thus $(0,0)\in \Gamma_{*}(U)$. We next consider the quantity
\begin{align}\label{HU}
H^{\rm par}(U,r) & = \frac{1}{r^2} \int_{\Sa_r^+} U^2\ \Gb |y|^{a} dX dt,
\end{align}
where
\[
\Gb(X,t)= \frac{(4 \pi)^{-n/2}}{2^a \Gamma(\frac{a+1}{2})} |t|^{-\frac{n+a+1}{2}} e^{\frac{|X|^2}{4t}}.
\]
The following result is Theorem~4.8 in \cite{BDGP2}.

\begin{thrm}[Monotonicity formula of Almgren-Poon type] \label{T:poon}
Let $U$ solve \eqref{e0}  with $F$ satisfying \eqref{fbound}. Then, for every $\sigma\in (0,1)$ there exists a constant $C>0$, depending on $n, a, M$ and $\sigma$,  such that the function
\begin{equation}\label{parfreq}
r\mapsto \Phi_{\ell,\sigma}^{\rm par}(U,r) \overset{\rm def}{=} \frac{1}{2} r e^{C r^{1-\sigma}} \frac{d}{dr}\log \max\left\{H^{\rm par}(U,r),\ r^{2\ell - 2+2\sigma}\right\} + 4(e^{Cr^{1-\sigma}}-1),
\end{equation}
is monotone nondecreasing on $(0,1)$.
In particular, the following limit exists
\begin{equation}\label{ka}
\kappa_{U} (0,0)= \Phi_{\ell,\sigma}^{\rm par}(U,0^+) \overset{\rm def}{=} \lim_{r\to 0^+}\Phi_{\ell,\sigma}^{\rm par}(U,r).
\end{equation}
\end{thrm}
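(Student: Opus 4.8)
The plan is to prove this by the parabolic frequency function method of Poon, in the form developed for Signorini-type problems in \cite{DGPT} and sharpened in \cite{GPPS}, with the truncation at the scale $r^{2\ell-2+2\sigma}$ playing the role of a built-in reservoir that absorbs the error produced by the polynomially vanishing right-hand side $F$ (this is the ``truncated Almgren'' idea alluded to in the Introduction). As a preliminary reduction I would regularize: mollify $F$ and replace $U$ by the penalized solutions $V_\varepsilon$ of Theorem~\ref{Ext}, even-reflected across $\{y=0\}$, which are smooth enough that all the computations below are licit; then prove the differential inequality $\frac{d}{dr}\Phi_{\ell,\sigma}^{\rm par}\ge 0$ for these with a constant $C$ independent of $\varepsilon$ and of the mollification, and finally pass to the limit in the \emph{integrated} (i.e.\ monotonicity) statement, which is stable under the convergences furnished by Theorem~\ref{Ext}. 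Throughout, \eqref{rel} records that at the extended free boundary point $(0,0)$ one has $U(0,0,0)=0$, $\nabla_x U(0,0,0)=0$, $\partial_y^a U(0,0,0)=0$, and Theorems~\ref{opt} and \ref{localized} guarantee that all the integrals below converge and that the traces on $\{y=0\}$ are meaningful.

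The heart of the argument is to pair $H^{\rm par}$ with the companion weighted Dirichlet energy
\[
I^{\rm par}(U,r)\overset{\rm def}{=}\frac{1}{r^2}\int_{\Sa_r^+}|t|\,|\nabla_X U|^2\,\Gb\,|y|^a\,dX\,dt,
\]
normalized to be parabolically scale-covariant of the same order as $H^{\rm par}$; this is also the reason for the factor $r^{-2}$ in $H^{\rm par}$, so that a $U$ homogeneous of parabolic degree $\kappa$ obeys $H^{\rm par}(U,r)=r^{2\kappa}H^{\rm par}(U,1)$, which matches the exponent $2\ell-2+2\sigma=2(\ell-1+\sigma)$ inside the maximum. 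Set $N(U,r)=I^{\rm par}(U,r)/H^{\rm par}(U,r)$. Since $\Gb$ is the backward fundamental solution of $\La$, integration by parts against the weight $\Gb\,|y|^a$, using $\nabla_X\log\Gb=\frac{X}{2t}$, naturally produces the generator of parabolic dilations $Z=X\cdot\nabla_X+2t\,\partial_t$; combining this with the equation $\La U=|y|^aF$ and with the Signorini relation $U\,\partial_y^a U\equiv 0$ on $\{y=0\}$ — so that the thin manifold contributes no boundary term — I would derive an ``$H$-formula'' of the shape $\frac{r}{2}\frac{d}{dr}\log H^{\rm par}(U,r)=N(U,r)+\mathcal E_H(r)$ together with a Rellich--Ne\v{c}as--Pohozaev identity for $\frac{d}{dr}I^{\rm par}(U,r)$ whose only thin-manifold boundary term is again $\int(ZU)\,\partial_y^a U$, vanishing by Signorini ($Z$ is tangent to $\{y=0\}$, while $\partial_y^a U$ vanishes wherever $U\ne0$ there). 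By \eqref{fbound}--\eqref{ptfbound} and Cauchy--Schwarz, the error terms are controlled by ratios of the type $\big(\int|F|^2|y|^a\Gb\big)^{1/2}(H^{\rm par})^{-1/2}$, and it is exactly the hypothesis $H^{\rm par}(U,r)\ge r^{2\ell-2+2\sigma}$ that turns these into $\mathcal E(r)\le Cr^{-\sigma}$ with $C=C(n,a,M,\sigma)$ (note $\sigma<1$ makes $r^{-\sigma}$ integrable at $0$).

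Combining the two identities and re-expressing $I^{\rm par}$ through the energy identity, the leading part of $\frac{d}{dr}N(U,r)$ is, up to $F$-errors, a positive multiple of the nonnegative variance $\|ZU\|_w^2\|U\|_w^2-\langle ZU,U\rangle_w^2$ of $ZU$ against the Gaussian measure $w\,dX\,dt=\Gb\,|y|^a\,dX\,dt$ (Cauchy--Schwarz). Feeding in the error bounds, one obtains, for the frequency-like quantity $\mathcal N(r):=\frac{r}{2}\frac{d}{dr}\log\max\{H^{\rm par}(U,r),\,r^{2\ell-2+2\sigma}\}$ — which on the truncation region is the constant $\ell-1+\sigma$, and whose derivative can only jump upward at the crossover radii — the differential inequality
\[
\mathcal N'(r)\ge -\,C(1-\sigma)\,r^{-\sigma}\,\big(\mathcal N(r)+4\big)\qquad\text{on }(0,1).
\]
Since the logarithmic derivative of $e^{Cr^{1-\sigma}}$ is $C(1-\sigma)r^{-\sigma}$, this says precisely that $\frac{d}{dr}\big[e^{Cr^{1-\sigma}}(\mathcal N(r)+4)\big]\ge 0$, hence $r\mapsto e^{Cr^{1-\sigma}}(\mathcal N(r)+4)$ is nondecreasing on $(0,1)$; and as $\Phi_{\ell,\sigma}^{\rm par}(U,r)+4=e^{Cr^{1-\sigma}}(\mathcal N(r)+4)$, this is exactly the asserted monotonicity. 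The existence of the limit in \eqref{ka} follows, and its finiteness from $\Phi_{\ell,\sigma}^{\rm par}(U,r)\le\Phi_{\ell,\sigma}^{\rm par}(U,1)$.

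I expect the main obstacle to be the Rellich--Ne\v{c}as identity for $I^{\rm par}$: because $Z$ carries the factor $2t\,\partial_t$, its first variation generates terms of the type $\int t(\partial_t U)(ZU)\,\Gb\,|y|^a$ and time-boundary contributions at $t=-r^2$ that are not obviously sign-definite and must be handled through the structural backward-Gaussian analogue of Rellich's identity, in which $\partial_t$ and the weight $e^{\frac{|X|^2}{4t}}$ combine into a perfect divergence — rather than through any a priori bound on $\partial_t U$, which is unavailable at blow-up scales. Closely related in difficulty is the rigorous justification of the integrations by parts up to $\{y=0\}$ and up to the parabolic boundary of $\Sa_r^+$: this requires the weighted $C^{1+\gamma}_a$ / $H^{1+\gamma,(1+\gamma)/2}$ regularity of Theorems~\ref{opt} and \ref{localized}, the vanishing \eqref{rel}, and a cutoff/limiting argument near $\{y=0\}$ and near the time endpoints so that every boundary term is legitimate. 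Finally, one must verify that the bound $\mathcal E(r)\le Cr^{-\sigma}$ holds with a constant uniform in the $\varepsilon$-regularization, which is what allows the monotonicity to survive the passage to the limit.
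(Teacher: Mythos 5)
The paper does not prove Theorem~\ref{T:poon} at all: the sentence immediately preceding it states ``The following result is Theorem~4.8 in \cite{BDGP2}'', so the result is simply quoted from the companion paper, whose source is not available here. There is therefore no ``paper's own proof'' to compare against, and your attempt cannot be checked line by line against an original.

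That said, the scheme you sketch is the expected one (Poon-type parabolic frequency with a truncation absorbing the $F$-error), and several features check out. The algebraic reduction is correct: writing $\mathcal N(r)=\frac{r}{2}\frac{d}{dr}\log\max\{H^{\rm par}(U,r),r^{2\ell-2+2\sigma}\}$, one indeed has $\Phi_{\ell,\sigma}^{\rm par}(U,r)+4=e^{Cr^{1-\sigma}}\bigl(\mathcal N(r)+4\bigr)$, so the asserted monotonicity is equivalent to $\mathcal N'(r)\ge -C(1-\sigma)r^{-\sigma}\bigl(\mathcal N(r)+4\bigr)$. Your observation that on the truncation region $\mathcal N\equiv\ell-1+\sigma$ and that crossovers can only produce upward jumps in the derivative is the standard (and correct) way to handle the $\max$. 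The tangency of $Z=X\cdot\nabla_X+2t\partial_t$ to $\{y=0\}$ (the $y\partial_yU$ piece vanishes in the trace because $a<1$ and $\partial_y^aU$ exists), together with the Signorini complementarity, is precisely what makes the thin-manifold boundary terms in the Rellich--Ne\v{c}as--Pohozaev identity drop out.

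Two caveats. First, your proposed regularization route is mismatched: Theorem~\ref{Ext} is an existence/penalization result on the bounded cylinder $\Q_1^+$, whereas Theorem~\ref{T:poon} is stated for the global problem \eqref{e0} on $\Sa_1^+=\R^{n+1}_+\times(-1,0]$, with Gaussian-weighted integrals over all of space; one would rather justify the integrations by parts via the local regularity of Theorems~\ref{opt} and \ref{localized} together with cut-offs and Steklov averages in time, and exploit the decay built into $\Gb$, rather than penalizing a Dirichlet problem on a ball. Second, you assert rather than derive that the $F$-errors close up to exactly $Cr^{-\sigma}(\mathcal N(r)+4)$; this is the heart of the quantitative work and depends on tracking the precise exponents coming from \eqref{fbound} against the truncation threshold $r^{2\ell-2+2\sigma}$, on both the $H$-formula and the first variation of your $I^{\rm par}$. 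The overall shape is right, but without those computations the argument is a credible outline rather than a proof.
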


\begin{dfn}
The  limit $\kappa_{U}(0,0)$ is called the \emph{parabolic frequency} at $(0,0) \in \Gamma_{*}(U)$. By translation, we can likewise define $\kappa_U(x_0, t_0)$ at every other free boundary point $(x_0, t_0)$ of $U$. 
\end{dfn}

We recall the definition \eqref{kappazero} of $\kappa_0$. The following gap result states, in particular, that $\kappa_0$ is the lowest possible frequency, see \cite[Lemma~7.2]{BDGP2}. 

\begin{lemma}\label{gap1}
Let   $\kappa_U=\kappa_U(0,0), \ell, \sigma$ be as in Theorem~\ref{T:poon}, and suppose that $\kappa_U \leq \ell - 1 + \sigma$. Then,  either  $\kappa_U = \kappa_0$, or $\kappa \geq 2$.
\end{lemma}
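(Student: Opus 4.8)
The plan is to prove the frequency gap by a blowup/compactness argument at the origin, exploiting the reduction to a time-independent (elliptic) problem that is available because $\partial_t U$ is bounded. First I would form the Almgren rescalings $U_r(X,t) = U(rX, r^2 t)/\left(r^{-2}H^{\rm par}(U,r)\right)^{1/2}$ (or the analogous normalization by $\|U\|_{L^2(\Sa_r^+, \Gb|y|^a)}$), so that $H^{\rm par}(U_r,1)=1$. The hypothesis $\kappa_U \le \ell-1+\sigma$ guarantees, via the monotonicity in Theorem~\ref{T:poon}, that the $\max$ in \eqref{parfreq} is eventually given by $H^{\rm par}(U,r)$ rather than by $r^{2\ell-2+2\sigma}$, so the truncation is inactive and $\Phi^{\rm par}_{\ell,\sigma}(U,r)$ is, up to the exponential corrections, the genuine Almgren–Poon frequency. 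Standard Caccioppoli/energy estimates in the Gaussian-weighted space (as in \cite{DGPT}, adapted to the $|y|^a$ weight) then give uniform $\mathscr V_a$ bounds for $\{U_r\}$ on compact subsets, and because $\partial_t U$ is bounded, $\partial_t U_r = r^2(\partial_t U)(rX,r^2t)/(\cdots)^{1/2} \to 0$ in an appropriate norm. Hence along a subsequence $U_r \to U_0$, with $U_0$ independent of $t$.

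Next I would identify the limit $U_0$: it is a nonzero global solution of the \emph{elliptic} Signorini problem \eqref{ell} with $\tilde F \equiv 0$ (the right-hand sides $\tilde F_r$ vanish in the limit by \eqref{fbound} together with the blowup normalization and the constraint $\kappa_U \le \ell-1+\sigma$), it is even in $y$, and it is homogeneous of degree $\kappa_U$ — homogeneity coming precisely from the fact that $\Phi^{\rm par}_{\ell,\sigma}(U,\cdot)$ is constant (equal to $\kappa_U$) in the limit, which forces the Gaussian-frequency Rellich/Almgren identity to hold with equality, hence $U_0$ to be a homogeneous global solution. Here I would invoke the known classification of homogeneous global solutions to the elliptic thin obstacle problem for $L_a$: such a solution is either of the model type $\hat v_0$ in \eqref{hatv0}, which is homogeneous of degree $\kappa_0 = (3-a)/2$, or its homogeneity degree is $\ge 2$ (the next admissible homogeneities being even integers, corresponding to harmonic-type solutions with no free boundary contribution, and the "$2$" being the first one above $\kappa_0$). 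This classification is exactly the elliptic input alluded to before Theorem~\ref{opt} and is the content of the relevant results in \cite{GPPS}, \cite{CSS}.

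Combining, $\kappa_U$ equals the homogeneity of $U_0$, which is either $\kappa_0$ or $\ge 2$; this is the assertion of Lemma~\ref{gap1}. The main obstacle, and the step I would spend the most care on, is the limiting argument for the frequency itself: one must show that $\Phi^{\rm par}_{\ell,\sigma}(U_r, \rho) \to \kappa_U$ for each fixed $\rho$ and that this passes to the limit object $U_0$, so that $U_0$ has constant frequency $\kappa_U$ and is therefore homogeneous. This requires (i) controlling the exponential correction factors $e^{Cr^{1-\sigma}}$ and the additive term $4(e^{Cr^{1-\sigma}}-1)$, which tend to $1$ and $0$ respectively as $r\to 0$, so they are harmless but must be tracked; (ii) verifying that the truncation $\max\{\cdot, r^{2\ell-2+2\sigma}\}$ is genuinely inactive near $0$ under the hypothesis $\kappa_U \le \ell-1+\sigma$ — this is where the specific numerology of the truncation exponent enters; and (iii) upgrading weak convergence to strong convergence in the weighted spaces so that the frequency of the limit equals the limit of the frequencies, typically via the compactness in \cite[Theorem~2.8]{Ne} (already used in Section~\ref{S:exist}) together with the energy bounds. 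Once homogeneity of $U_0$ is in hand, the gap is immediate from the elliptic classification, so the entire difficulty is concentrated in the compactness-and-homogeneity step.
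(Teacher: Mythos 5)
The paper does not prove this lemma in the present text; it is cited from \cite[Lemma~7.2]{BDGP2}, so there is no internal proof to compare against. Your blowup strategy (rescale, pass to a time-independent elliptic limit using boundedness of $\partial_t U$, invoke the classification of homogeneous elliptic solutions) is the natural route, but there is a genuine gap precisely at the step where you assert that $\partial_t U_r \to 0$ \emph{because} $\partial_t U$ is bounded. Writing the rescaling out, $\partial_t U_r(X,t) = r^2 (\partial_t U)(rX, r^2 t)/d_r$. Once the truncation is inactive, $H^{\rm par}(U,r)$ behaves like $r^{2\kappa_U}$, so the correct normalization $d_r = H^{\rm par}(U,r)^{1/2}$ (note $H^{\rm par}$ already carries a factor $r^{-2}$, so your $d_r = (r^{-2}H^{\rm par})^{1/2}$ is off by a factor of $r$) scales like $r^{\kappa_U}$; hence $\partial_t U_r = O(r^{2-\kappa_U})$. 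This vanishes if and only if $\kappa_U < 2$. Since the hypothesis $\kappa_U \le \ell - 1 + \sigma$ with $\ell \geq 4$ allows $\kappa_U$ well above $2$, your conclusion that the blowup is time-independent is false in exactly the range $\kappa_U \ge 2$, which is not a negligible corner of the statement --- it is one of the two alternatives in the conclusion.

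The missing idea is the dichotomy itself: one must argue that either $\kappa_U \ge 2$, in which case the conclusion of the lemma is immediate and no blowup analysis is needed, or $\kappa_U < 2$, and only then do the rescaled time derivative and right-hand sides $F_r \sim r^{\ell-\kappa_U}$ tend to zero, so that the limit $U_0$ is a nonzero, $\kappa_U$-homogeneous, time-independent global solution of the elliptic thin obstacle problem, whereupon the classification of such solutions forces $\kappa_U = \kappa_0$. As written, your argument appears to force an elliptic limit unconditionally, which would contradict the existence of parabolic solutions with $\kappa_U \ge 2$. A secondary point: Lemma~\ref{L:gap} as quoted in the paper only yields the alternative $\Phi_\delta(v,0^+)= n+a+2\kappa_0$ or $\geq n+a+2+2\delta$ with $\delta<1$, i.e.\ homogeneity $\kappa_0$ or $\geq 1+\delta < 2$; to reach the cleaner threshold $2$ in the elliptic branch one needs the finer classification (no even homogeneous solutions with degree strictly between $\kappa_0$ and $2$) that you gesture at but should make explicit and source carefully.
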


We can now introduce the notion of regular free boundary points. 

\begin{dfn}\label{reg}
We say that $(x_0,t_0)\in \Gamma_*(U)$ is a \emph{regular free boundary point} if the parabolic frequency $\kappa_U(x_0,t_0)=\kappa_0$. We denote this set of such points by $\Gamma_{\kappa_0}(U)$ and call it the \emph{regular set} of $U$.
\end{dfn}

\subsection{Results from the elliptic theory}\label{S:ellipticalmgren}

 We next recall some results from the elliptic theory that will be needed in our subsequent analysis of the parabolic problem. As before, let $a\in (-1,1)$. Given a function $v\in W^{1,2}_{\rm loc}(\B_1,|y|^a dX)\cap C(\B_1)$, for $0<r<1$ we introduce the quantities
\begin{align}\label{Hv}
H(r)=H(v,r) &\overset{\rm def}{=} \int_{\p \B_r} v^2 |y|^{a} d\sigma,\\
\intertext{where $\sigma$ indicates the standard $n$-dimensional surface measure on $\partial \BB$,}
\label{Gv}
G(r)=G(v,r) &\overset{\rm def}{=} \int_{\B_r} v^2 |y|^{a} dX,\\
\intertext{and} 
\label{D}
D(r)=D(v,r) &\overset{\rm def}{=} \int_{\BB} |\nabla v|^{2} |y|^{a} dX.\\
\intertext{We also consider the total energy of $v$}
\label{I}
I(r) = I(v,r) &\overset{\rm def}{=} \int_{\p \BB} v \langle\nabla v,\nu\rangle |y|^a,
\end{align}
where $\nu$ indicates the outer unit normal to a domain in $\Rnn$. 
The \emph{frequency} of $v$ is defined as
\begin{equation}\label{freq}
N(r)=N(v,r) \overset{\rm def}{=} \frac{rI(r)}{H(r)}.
\end{equation}

To simplify the notation, in the sequel we omit writing the measures $d\sigma$ an $dX$ in all the surface and volume integrals involved.

\subsection{An improved monotonicity formula}

The elliptic Almgren type monotonicity formula in \cite[Theorem~3.1]{CSS} is valid under the assumption that the right-hand side be in $C^{0,1}(\B_1)$. In our reduction of the parabolic problem \eqref{epb22} to an elliptic one, it is essential that we deal with a right-hand side which is only in $L^\infty(\B_1)$. This comes from the fact that our right-hand side contains $U_t$, which by Theorem~\ref{localized} is only bounded. We thus need an improvement of the above cited result in \cite{CSS}, similar to that first proved in \cite[Theorem~1.4]{GG} for case $a=0$. 

We now consider a solution $v\in W^{1,2}_{\rm loc}(\B_1,|y|^a dX)\cap C(\B_1)$ to the following elliptic thin obstacle problem with zero obstacle,
\begin{equation}\label{epb222}
 \begin{cases}
L_a v = |y|^a f &\text{in}\ \B_1^+ \cup \B_1^-,
\\
\min\{v(x,0),-\partial_y^a v(x,0)\}=0 &\text{on }B_1,\\
v(x,-y) = v(x,y) &\text{in }\B_1.
\end{cases}
\end{equation}
We remark explicitly that, if we let $\Lambda = \Lambda(v) = \{(x,0)\in \B_1\mid v(x,0) = 0\}$,
and denote by $\mathscr H^n$ the $n$-dimensional Hausdorff measure in $\Rnn$, then a solution $v$ to \eqref{epb222} satisfies the equation
\[
L_a v= |y|^a f\ +\ 2 \pa\ \mathscr H^n\lfloor_{\Lambda}\quad\text{in}\ \mathscr D'(\B_1).
\]
This means that for every $\vf\in W^{1,2}_0(\B_1,|y|^a dX)$, we have
\begin{equation}\label{epb222w}
\int_{\B_1} \langle\nabla v,\nabla \vf\rangle |y|^a = - \int_{\B_1} \vf f |y|^a - 2 \int_{B_1\cap \Lambda} \vf \pax,
\end{equation}
where in the last integral the function $\vf$ must be interpreted in the sense of traces. We also  define the free boundary in the following way
\[
\Gamma(v)= \partial_{B_1} \Lambda(v)
\]
We now state a few results whose proofs  in this generality can be found, for instance, in \cite{CDS} and \cite{GPPS}.  

\begin{lemma}[Caccioppoli inequality]\label{L:caccio}
Let $v\in W^{1,2}(\B_1,|y|^a dX)\cap C(\B_1)$ be a solution to \eqref{epb222}. Then, there exists a constant $C>0$ depending on $n, a$ such that
\begin{equation}\label{caccio}
\int_{\BB} |\nabla v|^2 |y|^a \le \frac{C}{r^2} \int_{\B_{2r}} v^2 |y|^a + \|f\|_{L^\infty(\B_1)} \int_{\B_{2r}} v^2 |y|^a.
\end{equation}
\end{lemma}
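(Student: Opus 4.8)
The plan is to test the weak formulation \eqref{epb222w} with a cutoff of $v$ itself. First I would fix a standard cutoff $\zeta\in C_0^\infty(\B_{2r})$ with $\zeta\equiv 1$ on $\B_r$, $0\le\zeta\le 1$, and $|\nabla\zeta|\le C/r$, and take $\vf=\zeta^2 v$ as test function in \eqref{epb222w}. This is admissible since $\vf\in W^{1,2}_0(\B_1,|y|^a dX)$. Expanding $\nabla\vf=\zeta^2\nabla v+2\zeta v\nabla\zeta$, the left-hand side becomes
\[
\int_{\B_1}\zeta^2|\nabla v|^2|y|^a+2\int_{\B_1}\zeta v\langle\nabla v,\nabla\zeta\rangle|y|^a.
\]

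The key observation — and the reason the sign works out — is that the boundary term on the right of \eqref{epb222w} drops out favorably: on $B_1\cap\Lambda$ one has $v(x,0)=0$, hence $\vf=\zeta^2 v$ vanishes there in the sense of traces, so $\int_{B_1\cap\Lambda}\vf\,\pax=0$. (Equivalently, one could keep it and use the complementarity condition $v\,\partial_y^a v\le 0$ to see it has the right sign, but here it is simply zero.) Thus I am left with
\[
\int_{\B_1}\zeta^2|\nabla v|^2|y|^a = -2\int_{\B_1}\zeta v\langle\nabla v,\nabla\zeta\rangle|y|^a-\int_{\B_1}\zeta^2 v f|y|^a.
\]

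Now I would absorb. For the first term on the right, Cauchy–Schwarz followed by Young's inequality $2|\zeta v\langle\nabla v,\nabla\zeta\rangle|\le \tfrac12\zeta^2|\nabla v|^2+2v^2|\nabla\zeta|^2$ lets me absorb half the gradient term into the left-hand side and leaves $2\int v^2|\nabla\zeta|^2|y|^a\le (C/r^2)\int_{\B_{2r}}v^2|y|^a$. For the second term I would bound $|\int\zeta^2 vf|y|^a|\le \|f\|_{L^\infty(\B_1)}\int_{\B_{2r}}|v|\,|y|^a$; here one can either apply Young's inequality to get $\|f\|_{L^\infty}(\int v^2|y|^a + \int_{\B_{2r}}|y|^a)$, or — matching the exact form stated in \eqref{caccio} — simply use $|v|\le 1+v^2$... more cleanly, bound $\int_{\B_{2r}}|v||y|^a$ crudely. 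Since the stated inequality \eqref{caccio} has $\|f\|_{L^\infty}\int_{\B_{2r}}v^2|y|^a$ on the right, the cleanest route is Young's inequality $|v f|\le \tfrac12 f^2 + \tfrac12 v^2$ giving a term $\le\tfrac12\|f\|_{L^\infty}^2|\B_{2r}|_a + \tfrac12\|f\|_{L^\infty}... $; absorbing the measure-of-ball constant into $C/r^2$ after noting $r<1$ yields precisely the claimed bound. Collecting everything and dropping $\zeta$ on the left (using $\zeta\equiv 1$ on $\B_r$) gives \eqref{caccio}.

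The only genuinely delicate point is the justification that $v$ itself (times $\zeta^2$) is a legitimate test function and that the trace $\vf|_\Lambda$ vanishes — i.e., that the distributional identity \eqref{epb222w} extends from smooth test functions to $\vf=\zeta^2 v\in W^{1,2}_0(\B_1,|y|^a dX)$. This is standard density/approximation given $v\in W^{1,2}(\B_1,|y|^a dX)\cap C(\B_1)$ and the fact that $|y|^a$ is an $A_2$-weight, so trace operators onto $\{y=0\}$ are well defined; everything else is the routine absorption above.
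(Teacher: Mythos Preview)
Your approach is correct and is the standard one: the paper does not give its own proof of this lemma but refers to \cite{CDS} and \cite{GPPS}, where precisely this test-function argument (take $\vf=\zeta^2 v$ in \eqref{epb222w}, observe the thin boundary term vanishes since $v=0$ on $\Lambda$, absorb via Young's inequality) is carried out.

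One small point: your final step, where you try to match the exact form of the $f$-term in \eqref{caccio}, does not quite work. The natural output of the argument is
\[
\int_{\B_r}|\nabla v|^2|y|^a \le \frac{C}{r^2}\int_{\B_{2r}}v^2|y|^a + C\|f\|_{L^\infty(\B_1)}\int_{\B_{2r}}|v|\,|y|^a,
\]
with $|v|$ rather than $v^2$ in the second term. Your suggestion to ``absorb the measure-of-ball constant into $C/r^2$ after noting $r<1$'' is not valid: the extra term coming from $|v|\le \tfrac12(1+v^2)$ or from Young on $|vf|$ is of order $\|f\|_{L^\infty}\,r^{n+1+a}$ (or $\|f\|_{L^\infty}^2\,r^{n+1+a}$), which is an additive constant independent of $v$ and cannot be absorbed into $(C/r^2)\int_{\B_{2r}}v^2|y|^a$ without a lower bound on $\int v^2$. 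This appears to be a slight imprecision in the statement of \eqref{caccio} itself rather than a gap in your argument; in the only place the lemma is used (the proof of Theorem~\ref{T:weiss}), one already has the growth bound $G(2r)\le \bar C r^{n+a+2\kappa_0+1}$, and the estimate $D(r)\le C'(1+\|f\|_{L^\infty})\,r^{n+a+2\kappa_0-1}$ needed there follows just as well from the version with $\int|v|$.
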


In our subsequent analysis we also need the following Schauder type estimate. 

\begin{thrm}\label{schauder}
Let $v$ be a solution to \eqref{epb222} with $f$ bounded. Then, there exists $\alpha>0$ such that
\[
\|v\|_{C^{ \alpha}(\overline{\B_{1/2}^+})} + \|\nabla_x v\|_{C^{ \alpha}(\overline{\B_{1/2}^+})} + \|y^a v_y\|_{C^{\alpha}(\overline{\B_{1/2}^+})} \leq C,
\]
for some universal $C$ depending on $\|U\|_{L^{2}(\B_1^+, y^a dX)}$ and $\|f\|_{L^\infty(\B_1)}$.
\end{thrm}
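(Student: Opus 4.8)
The plan is to establish the three estimates — $C^\alpha$ bounds for $v$, for $\nabla_x v$, and for $y^a v_y$ — on $\overline{\B_{1/2}^+}$ by a bootstrap argument that exploits the structure of the problem and the Caccioppoli inequality of Lemma~\ref{L:caccio}. The first, coarsest, bound on $\|v\|_{C^\alpha(\overline{\B_{1/2}^+})}$ follows directly from the theory of Chiarenza--Serapioni \cite{CSe} (cited already in the Introduction): away from $\{y=0\}$ the operator $L_a$ is uniformly elliptic with an $A_2$-weight, and across $\{y=0\}$ one uses the even reflection of $v$ on the open (noncontact) part $\Omega(v)$ together with the one-sided sign condition on the contact set $\Lambda(v)$, so that $v$ is a subsolution of $L_a$ in all of $\B_1$ after even reflection; this yields local boundedness and, combined with the De Giorgi--Nash--Moser estimate for $A_2$-degenerate equations, the interior $C^\alpha$ bound with constant controlled by $\|v\|_{L^2(\B_1^+,|y|^a dX)}$ and $\|f\|_{L^\infty(\B_1)}$.

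Next I would establish the estimate on $\nabla_x v$. The key observation is that for each $i\in\{1,\dots,n\}$ the tangential derivative $w = \partial_{x_i} v$ solves a \emph{linear} degenerate equation $L_a w = |y|^a \partial_{x_i} f$ (understood in the weak sense \eqref{epb222w}) in the open set $\mathbb{B}_1 \setminus \overline{\Lambda(v)}$, and on the contact set it satisfies a one-sided Signorini-type condition inherited from $v$: on $\Lambda(v)$ one has $v(x,0)\equiv 0$, hence $\partial_{x_i} v(x,0) = 0$ there. One can therefore run a difference-quotient argument in the tangential directions on the weak formulation \eqref{epb222w}, using the Caccioppoli inequality \eqref{caccio} to bound the relevant $|y|^a$-weighted energies of $\partial_{x_i} v$ uniformly, and then apply the same Chiarenza--Serapioni/De Giorgi--Nash--Moser machinery (again via even reflection across the non-contact part of $\{y=0\}$, where $\partial_y^a v = 0$) to conclude $\|\nabla_x v\|_{C^\alpha(\overline{\B_{1/2}^+})}\le C$. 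Alternatively, since we already cited \cite{CDS} in the excerpt, one may simply quote the elliptic $C^{1+\alpha}_a$ regularity established there and extract the $C^\alpha$ bound on $\nabla_x v$.

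Finally, for $y^a v_y$ I would use the equation itself: from $L_a v = |y|^a f$ written in the form $\partial_y(y^a v_y) = -y^a \Delta_x v - y^a f$ on $\{y>0\}$, and using the already-proved bound on $\nabla_x v$ together with a further tangential difference-quotient argument to control $\Delta_x v$ in a suitable weighted Sobolev sense, one integrates in $y$ from the boundary (where $\partial_y^a v = 0$ on the non-contact set) to obtain Hölder continuity of $g(x,y):=y^a v_y(x,y)$ up to $\{y=0\}$; this is precisely the structure of the argument for $C^{1+\alpha}_a$ regularity. The main obstacle — and the point where care is required — is the interface $\{y=0\}$ and in particular the free boundary $\Gamma(v)$: the tangential difference-quotient argument must be arranged so that the boundary integral $\int_{B_1\cap\Lambda} \varphi\, \partial_y^a v$ in \eqref{epb222w} has a favorable sign (this is where the complementarity condition $\min\{v(\cdot,0),-\partial_y^a v(\cdot,0)\}=0$ enters), and the even-reflection trick only legitimately applies on the \emph{open} non-contact part of the thin manifold, so one needs the Caccioppoli inequality to absorb the contributions near $\Gamma(v)$ rather than any direct control there. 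Since the statement only claims $C^\alpha$ (not a specific exponent), it suffices to track that all constants depend only on $n$, $a$, $\|v\|_{L^2(\B_1^+,|y|^a dX)}$ and $\|f\|_{L^\infty(\B_1)}$, which they do at each step.
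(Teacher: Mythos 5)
Your approach is genuinely different from the paper's, but it contains gaps at exactly the points you flag as requiring ``care.'' The paper does \emph{not} attempt a direct difference-quotient/bootstrap argument on $v$ across the free boundary. Instead it makes a decomposition: take $w$ solving the \emph{unconstrained} Dirichlet problem $L_a w = |y|^a f$ in $\B_1$, $w=0$ on $\partial\B_1$, which by uniqueness is even in $y$ and hence, by Proposition~\ref{ac1}, is $C^{1,\gamma}_{\rm loc}$ with $\partial_y^a w(\cdot,0)=0$. Then $u=v-w$ solves the \emph{homogeneous} thin obstacle problem \eqref{b1} with obstacle $\psi=-w(\cdot,0)\in C^{1,\gamma}_{\rm loc}(B_1)$. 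This reduction is what makes the regularity theory applicable verbatim: CDS (Theorem~6.1) gives $u(\cdot,0)\in C^{1,s}$, a cutoff argument as in Lemma~4.1 of \cite{CSS} gives $C^\beta$ continuity of $\partial_y^a u(\cdot,0)$, and the difference-quotient argument of \cite[Lemma~2.17]{PP} then propagates this to $C^\delta$ regularity of $\nabla_x u$ and $y^a u_y$ on $\overline{\B_{1/2}^+}$. Adding $w$ back finishes the proof. The decomposition buys you a problem with zero right-hand side and a smooth obstacle, where the cited regularity results hold cleanly.

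In your version, the direct tangential difference-quotient argument on $v$ does not go through as sketched. The complementarity condition does not give the boundary integral in \eqref{epb222w} a single usable sign once $\varphi$ is a difference quotient; the mixed boundary conditions ($\partial_{x_i}v=0$ on the relative interior of $\Lambda$, $\partial_y^a\partial_{x_i}v=0$ on $\Omega(v)$, neither on $\Gamma(v)$) are precisely the source of the difficulty and cannot simply be absorbed via Caccioppoli near $\Gamma(v)$. Your alternative, to ``simply quote CDS,'' yields the trace regularity $v(\cdot,0)\in C^{1,s}(B_1)$ but does not by itself give the $C^\alpha$ bound for $\nabla_x v$ and $y^a v_y$ on $\overline{\B_{1/2}^+}$; the propagation to $y>0$ up to the thin manifold is exactly what the paper gets from \cite{CSS} and \cite{PP} in the zero-RHS setting. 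Finally, your third step fails on the contact set: integrating $\partial_y(y^a v_y)=-y^a(\Delta_x v + f)$ from $y=0$ requires the boundary value $\partial_y^a v(x,0)$, which vanishes only on the non-contact part and is merely known to be $\le 0$ (and a priori has no modulus of continuity) on $\Lambda(v)$. So the argument as written does not close; you need either the paper's decomposition or an equivalent device to reduce to the homogeneous thin obstacle problem where the citations apply.
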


\begin{proof}
This essentially follows from the regularity results in \cite{CDS}, but we nevertheless provide details for the sake of completeness, since such a result is not explicitly stated  there.   We first evenly reflect $f$ and then  let $w$ be the solution to 
\[
\begin{cases}
L_a w = |y|^a f &\text{in}\ \B_1,
\\
w = 0 &\text{on}\ \p \B_1.
\end{cases}
\]
Recall that, by uniqueness, we have $w(x,-y) = w(x,y)$. Therefore, in view of Proposition~\ref{ac1},    for every $\gamma \in (0,1)$ we have $w\in C^{1,\gamma}_{\rm loc}(\B_1)$.  By symmetry, it follows for  $x\in B_1$
\[
\partial_y^a w(x,0)=0.
\]
Thus, if we  define $u = v- w$ and $\psi(x) {=} - w(x,0)$,  it is clear that   the function $u$ solves the problem
\begin{equation}\label{b1}
 \begin{cases}
L_a u = 0 &\text{in}\ \B_1^+ \cup \B_1^-,
\\
\min\{u(x,0)-\psi(x),-\partial_y^a u(x,0)\}=0 &\text{on }B_1,\\
u(x,-y) = u(x,y) &\text{in }\B_1.
\end{cases}
\end{equation}
Since  $\psi\in C^{1,\gamma}_{\rm loc}(B_1)$ for every $\gamma \in (0,1)$, an application of Theorem~6.1 in \cite{CDS} gives $u(x, 0) \in C^{1,s}(B_1)$. Then, using cut-offs and an argument as in the proof of Lemma~4.1 in \cite{CSS},  we can assert that  $\lim_{y \to 0} y^a u_y \in C^{\beta}(B_1)$, for some $\beta>0$. Adapting the difference quotient argument in the proof of \cite[Lemma~2.17]{PP}, we can finally conclude  that, for some  $\delta>0$, $\nabla_x u, y^a u_y \in C^{\delta}(\overline{B_{1/2}^+})$. Finally, since $v= u+w$, the desired conclusion follows. 
\end{proof}

\begin{lemma}\label{L:H'}
One has for every $r\in (0,1)$
\begin{equation}\label{H'}
H'(r) = \frac{n+a}{r} H(r) + 2 I(r).
\end{equation}
\end{lemma}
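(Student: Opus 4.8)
The plan is to prove the first-variation identity \eqref{H'} for the weighted spherical integral $H(r) = \int_{\p\B_r} v^2 |y|^a\, d\sigma$ by a direct differentiation in $r$, after rescaling the sphere to a fixed domain so that the differentiation hits the integrand rather than the domain of integration. Concretely, I would substitute $X = r\omega$ with $\omega \in \p\B_1$, so that $d\sigma_{\p\B_r} = r^n\, d\sigma_{\p\B_1}$ and $|y|^a = r^a |\omega_{n+1}|^a$ (writing $\omega = (\omega', \omega_{n+1})$ for the last coordinate), whence
\[
H(r) = r^{n+a} \int_{\p\B_1} v(r\omega)^2 |\omega_{n+1}|^a\, d\sigma_{\p\B_1}.
\]
Differentiating under the integral sign gives
\[
H'(r) = (n+a)\, r^{n+a-1} \int_{\p\B_1} v^2 |\omega_{n+1}|^a + r^{n+a} \int_{\p\B_1} 2 v\, \langle \nabla v(r\omega), \omega\rangle\, |\omega_{n+1}|^a.
\]
The first term is exactly $\frac{n+a}{r} H(r)$. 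For the second term, undoing the scaling and noting that $\omega = X/r = \nu$ is the outer unit normal to $\B_r$ at $X \in \p\B_r$, we recognize
\[
r^{n+a} \int_{\p\B_1} 2 v\, \langle \nabla v, \omega\rangle |\omega_{n+1}|^a = 2 \int_{\p\B_r} v\, \langle \nabla v, \nu\rangle |y|^a = 2 I(r),
\]
by the definition \eqref{I} of $I(r)$. Combining the two contributions yields \eqref{H'}.

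The only genuine subtlety is the legitimacy of differentiating under the integral sign near $y = 0$, where the weight $|y|^a$ with $a \in (-1,1)$ is either singular (for $a < 0$) or has a non-Lipschitz zero (for $a > 0$). This is harmless because $|y|^a = |\omega_{n+1}|^a r^a$ is integrable on $\p\B_1$ (indeed $\int_{\p\B_1} |\omega_{n+1}|^a\, d\sigma < \infty$ precisely when $a > -1$), and by the Caccioppoli inequality (Lemma~\ref{L:caccio}) together with the local $W^{1,2}(\B_1, |y|^a dX)$-membership of $v$, the integrand $v^2 |\omega_{n+1}|^a$ and its $r$-derivative $2v \langle \nabla v(r\omega), \omega\rangle |\omega_{n+1}|^a$ are dominated, for $r$ in a compact subinterval of $(0,1)$, by an $L^1(\p\B_1)$ function after an application of Cauchy--Schwarz and a coarea/Fubini argument. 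In fact the cleanest route is to first establish the identity in the integrated (distributional) form — i.e., show $r \mapsto H(r)/r^{n+a}$ is absolutely continuous with a.e. derivative $2 r^{-(n+a)} (I(r) - \text{lower order})$ — using the coarea formula and the fact that $v \in W^{1,2}_{\rm loc}(\B_1, |y|^a dX) \cap C(\B_1)$, so that $H$ is in fact locally Lipschitz on $(0,1)$. I expect this regularity bookkeeping to be the main (and essentially only) obstacle; once it is in place, the identity \eqref{H'} drops out of the scaling computation above with no further work.
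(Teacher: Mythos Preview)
Your proposal is correct and is exactly the standard scaling computation one expects here. Note that the paper does not actually supply its own proof of this lemma: it groups Lemma~\ref{L:H'} with Lemmas~\ref{L:caccio}, \ref{L:ID}, \ref{L:fve} and Theorem~\ref{schauder} under the remark that ``proofs in this generality can be found, for instance, in \cite{CDS} and \cite{GPPS}.'' The argument in those references is precisely the change of variables $X = r\omega$ you carry out, so there is no genuine difference in approach to report.
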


Our next results connect $I(r)$ to $D(r)$, and give the first variation of both. 

\begin{lemma}\label{L:ID}
For every $r\in (0,1)$ one has
\begin{equation}\label{ID}
I(r) = D(r) + \int_{\BB} v f |y|^a.
\end{equation}
\end{lemma}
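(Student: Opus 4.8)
The plan is to derive \eqref{ID} by testing the weak formulation \eqref{epb222w} of the equation against $v$ itself, localized to the ball $\BB$. Concretely, I would fix $r\in(0,1)$ and use a cutoff argument: take test functions $\vf_k \in W^{1,2}_0(\B_1,|y|^a dX)$ that approximate $v\cdot\mathbf 1_{\BB}$, i.e. $\vf_k = v\,\chi_k$ where $\chi_k$ is a radial Lipschitz cutoff equal to $1$ on $\B_{r-1/k}$ and supported in $\BB$, with $|\nabla\chi_k|\lesssim k$. Plugging $\vf_k$ into \eqref{epb222w} gives
\[
\int_{\B_1} \langle\nabla v,\nabla(v\chi_k)\rangle |y|^a = -\int_{\B_1} v\chi_k f |y|^a - 2\int_{B_1\cap\Lambda} v\chi_k\, \pax.
\]
The crucial observation is that the last term vanishes: on the coincidence set $\Lambda$ one has $v(x,0)=0$, so $v\chi_k \equiv 0$ there (interpreting $v$ in the trace sense), hence the boundary contribution from the obstacle disappears entirely. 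This is exactly the feature that makes the frequency function well-behaved.

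Next I would expand $\nabla(v\chi_k) = \chi_k\nabla v + v\nabla\chi_k$, so the left side becomes $\int \chi_k|\nabla v|^2|y|^a + \int v\langle\nabla v,\nabla\chi_k\rangle |y|^a$. As $k\to\infty$, the first integral converges to $D(r)=\int_{\BB}|\nabla v|^2|y|^a$ by monotone/dominated convergence (using $v\in W^{1,2}$ near $\p\BB$, which holds for a.e.\ $r$ by the coarea formula, or by working with the Schauder regularity from Theorem~\ref{schauder} which gives continuity up to $\{y=0\}$). The term $\int v\langle\nabla v,\nabla\chi_k\rangle|y|^a$ concentrates on the annulus $\B_r\setminus\B_{r-1/k}$ where $\nabla\chi_k = -k\,\nu$ (up to the sign convention), and in the limit produces $-\int_{\p\BB} v\langle\nabla v,\nu\rangle|y|^a = -I(r)$, again for a.e.\ $r$; by continuity of both sides in $r$ (granted by the regularity theory) the identity then holds for every $r$. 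The right side converges to $-\int_{\BB} vf|y|^a$ by dominated convergence since $f\in L^\infty$ and $v\in L^2(\B_1,|y|^a)$. Rearranging yields $D(r) - I(r) = -\int_{\BB} vf|y|^a$, which is precisely \eqref{ID}.

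The main obstacle is the rigorous justification of the trace identity on $\Lambda$ and the boundary term extraction at $\p\BB$ — specifically, ensuring $v$ has enough regularity that $v\langle\nabla v,\nu\rangle$ makes sense as a surface integrand on $\p\BB$ for the relevant radii. This is where Theorem~\ref{schauder} (giving $v, \nabla_x v, y^a v_y \in C^\alpha$ up to $\{y=0\}$) and interior elliptic regularity away from $\{y=0\}$ are used: together they show $v \in W^{1,2}(\B_r, |y|^a dX) \cap C(\overline{\B_r})$ with a well-defined normal trace, so the integration-by-parts/cutoff limit is legitimate for all $r\in(0,1)$, not merely a.e. An alternative, cleaner route avoiding cutoffs is to integrate by parts directly on $\BB^+$ and $\BB^-$ separately, picking up a boundary term on $\{y=0\}\cap B_r$ of the form $\int_{B_r} v\, \pax$ (with matching signs from the two half-balls, using the even symmetry), which again vanishes on $\Lambda$ and vanishes off $\Lambda$ because there $\pax = 0$ by the Signorini condition; this reproduces \eqref{ID} with the same bookkeeping. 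I would present whichever is shortest given what has already been established, most likely the direct integration-by-parts on the two half-balls.
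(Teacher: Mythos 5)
Your argument is correct. The paper does not give its own proof of Lemma~\ref{L:ID}; it refers to \cite{CDS} and \cite{GPPS} for the proofs of this group of elliptic lemmas. Both routes you outline -- testing the weak formulation \eqref{epb222w} against $v\chi_k$ with a radial cutoff, and direct integration by parts on $\B_r^\pm$ followed by using the Signorini complementarity ($v=0$ on $\Lambda$, $\partial_y^a v=0$ off $\Lambda$) to kill the thin-face term -- are standard and correct, and the second is indeed the cleaner of the two given that Theorem~\ref{schauder} already supplies the regularity ($v, \nabla_x v, y^a v_y \in C^\alpha$ up to $\{y=0\}$, plus interior smoothness away from the thin set) needed to make the boundary integrals on $\p\B_r$ and on $B_r$ classical. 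One small point worth making explicit if you write this up: the identity is claimed for \emph{every} $r\in(0,1)$, not just a.e.\ $r$, and your appeal to continuity of $r\mapsto I(r)$ and $r\mapsto D(r)$ (granted by the regularity theory) to upgrade from a.e.\ to all $r$ is exactly the right fix -- the direct integration-by-parts route sidesteps this entirely, which is another reason to prefer it.
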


\begin{lemma}\label{L:fve}
For every $r\in (0,1)$ one has
\begin{align}\label{fve1}
D'(r) &= \frac{n+a-1}{r} D(r) + 2 \int_{\p \BB} \langle\nabla v,\nu\rangle^2  |y|^a - \frac 2r \int_{\BB} \langle\nabla v,X\rangle f\\
\label{fve2}
I'(r) & = \frac{n+a-1}{r} I(r) + 2 \int_{\p \BB} \langle\nabla v,\nu\rangle^2  |y|^a - \frac 2r \int_{\BB} \langle\nabla v,X\rangle f |y|^a
\\
&\qquad - \int_{\p \BB} vf |y|^a - \frac{n+a-1}{r} \int_{\BB} vf |y|^a.
\notag
\end{align}

\end{lemma}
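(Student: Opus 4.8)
The plan is to prove both first-variation identities \eqref{fve1} and \eqref{fve2} by the standard domain-variation (Rellich--Ne\v{c}as) technique, exploiting the weighted divergence structure $L_a v = \operatorname{div}(|y|^a \nabla v) = |y|^a f$ together with the complementarity conditions. First I would establish \eqref{fve1}. Differentiating $D(r) = \int_{\B_r} |\nabla v|^2 |y|^a$ in $r$ gives $D'(r) = \int_{\partial \B_r} |\nabla v|^2 |y|^a$ directly, so the content is to rewrite this boundary integral. The tool is the Rellich--Pohozaev identity obtained by testing the equation against the vector field $X$; concretely, one computes $\operatorname{div}\bigl(|y|^a \langle \nabla v, X\rangle \nabla v - \tfrac12 |y|^a |\nabla v|^2 X\bigr)$ on $\B_r^+$ (and on $\B_r^-$), using $\operatorname{div}(|y|^a \nabla v) = |y|^a f$ and $\operatorname{div}(|y|^a X) = (n+1+a)|y|^a$. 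Integrating over $\B_r^\pm$ and adding, the contributions on $\{y=0\}$ cancel: on $\Lambda(v)$ one has $\langle\nabla v, X\rangle = \langle \nabla_x v, x\rangle$ on each side but $v$ vanishes there so the tangential derivatives vanish and the normal-derivative jump is $\partial_y^a v$ from the $\B_r^+$ side paired against $+\partial_y^a v$ from the $\B_r^-$ side with opposite outer normals, while on $\Omega(v)$ the weighted normal derivative $\partial_y^a v = 0$; in both cases the $\{y=0\}$ flux is zero. This produces exactly $r\int_{\partial\B_r}\langle\nabla v,\nu\rangle^2|y|^a - \tfrac r2 \int_{\partial\B_r}|\nabla v|^2 |y|^a = \tfrac{n+a-1}{2}\int_{\B_r}|\nabla v|^2|y|^a + \int_{\B_r}\langle\nabla v,X\rangle f |y|^a$ after using $\langle X,\nu\rangle = r$ on $\partial\B_r$; rearranging and dividing by $r$ gives \eqref{fve1}.

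For \eqref{fve2}, I would start from the identity $I(r) = D(r) + \int_{\B_r} v f |y|^a$ of Lemma~\ref{L:ID} and differentiate in $r$. The term $D'(r)$ is supplied by \eqref{fve1}, and $\frac{d}{dr}\int_{\B_r} vf|y|^a = \int_{\partial\B_r} vf|y|^a$. Then I would re-express the resulting combination: the pieces $\frac{n+a-1}{r}D(r) = \frac{n+a-1}{r}\bigl(I(r) - \int_{\B_r} vf|y|^a\bigr)$ convert the $D$-term into the $\frac{n+a-1}{r}I(r)$ term of \eqref{fve2} at the cost of the $-\frac{n+a-1}{r}\int_{\B_r} vf|y|^a$ term, while $\int_{\partial\B_r} vf|y|^a$ supplies the $-\int_{\partial\B_r} vf|y|^a$ term once the sign is tracked. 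The $2\int_{\partial\B_r}\langle\nabla v,\nu\rangle^2|y|^a$ and $-\frac2r\int_{\B_r}\langle\nabla v,X\rangle f|y|^a$ terms are inherited unchanged from \eqref{fve1}. The bookkeeping should close exactly, so \eqref{fve2} is essentially a corollary of \eqref{fve1} and Lemma~\ref{L:ID}.

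The main obstacle, and the point requiring genuine care, is the justification of the integration by parts in the Rellich--Pohozaev step in the presence of the $A_2$ weight $|y|^a$ and the low regularity of $v$ near $\{y=0\}$ and near $\partial\B_r$. One must check that $v$ has enough regularity for $\langle\nabla v, X\rangle \nabla v$ to have a well-defined trace on $\partial \B_r$ for a.e.\ $r$ and that the boundary terms on $\{y=0\}$ really vanish; this is exactly where the Schauder estimate of Theorem~\ref{schauder} (giving $\nabla_x v, y^a v_y \in C^\alpha$ up to $\{y=0\}$) and the measure-theoretic description \eqref{epb222w} of the solution are used — the measure $2\partial_y^a v\, \mathscr H^n\lfloor_\Lambda$ must be shown not to contribute, which follows since $\partial_y^a v \le 0$ on $\Lambda$ and $v = 0$ there forces $\langle\nabla v, X\rangle = 0$ on $\Lambda$ in the trace sense. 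A secondary technical point is handling $\int_{\B_r}\langle\nabla v,X\rangle f|y|^a$: this is finite since $\nabla v \in L^2(|y|^a)$ locally by Caccioppoli (Lemma~\ref{L:caccio}) and $f \in L^\infty$, and the absolute continuity in $r$ of the volume integrals is routine. Once these integrability and trace issues are settled, the computation is the classical one and both identities follow by direct algebra.
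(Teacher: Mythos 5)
The paper does not actually give a proof of this lemma; it only refers to \cite{CDS} and \cite{GPPS}. Your route — a weighted Rellich--Pohozaev identity for \eqref{fve1}, then differentiate $I(r)=D(r)+\int_{\B_r}vf|y|^a$ and substitute \eqref{fve1} to get \eqref{fve2} — is the standard one and is essentially correct. The divergence identity
\[
\operatorname{div}\Bigl(|y|^a\langle\nabla v,X\rangle\nabla v-\tfrac12|y|^a|\nabla v|^2X\Bigr)
=|y|^af\langle\nabla v,X\rangle-\tfrac{n+a-1}{2}|y|^a|\nabla v|^2
\]
together with $\langle X,\nu\rangle=r$ on $\partial\B_r$ and the vanishing of the flux through $\{y=0\}$ gives \eqref{fve1} exactly; note incidentally that your derivation (correctly) produces the last integrand with weight $|y|^a$, which is evidently dropped by typo in the paper's display \eqref{fve1} (it reappears in the same term in \eqref{fve2} and in the proof of Theorem~\ref{T:weiss}). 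Your discussion of why the $\{y=0\}$ contribution vanishes is a little imprecise in wording — the flux is $-2\langle\nabla_x v,x\rangle\,\partial_y^a v$ after adding the $\B_r^\pm$ pieces, and it vanishes because $\nabla_x v=0$ on $\Lambda$ (where $v\ge 0$ attains its minimum) while $\partial_y^a v=0$ off $\Lambda$; the $\tfrac12|y|^a|\nabla v|^2\langle X,\nu\rangle$ piece dies separately since $\langle X,\nu\rangle=\mp y$ and $|y|^{a+1}|\partial_y v|^2\sim|y|^{1-a}\to0$. But the substance is right.

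The one real issue is in your treatment of \eqref{fve2}, where you assert that "the bookkeeping should close exactly" and produce the stated $-\int_{\partial\B_r}vf|y|^a$. If you actually run the bookkeeping you will find it does not close with that sign. Writing $J(r)=\int_{\B_r}vf|y|^a$, Lemma~\ref{L:ID} gives $I=D+J$, so $I'=D'+J'$ with $J'(r)=\int_{\partial\B_r}vf|y|^a$. Substituting \eqref{fve1} and $D=I-J$ yields
\[
I'(r)=\frac{n+a-1}{r}I(r)+2\int_{\partial\B_r}\langle\nabla v,\nu\rangle^2|y|^a-\frac{2}{r}\int_{\B_r}\langle\nabla v,X\rangle f|y|^a+\int_{\partial\B_r}vf|y|^a-\frac{n+a-1}{r}\int_{\B_r}vf|y|^a,
\]
i.e.\ the sign in front of $\int_{\partial\B_r}vf|y|^a$ is $+$, not $-$. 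The $-$ sign in the paper's \eqref{fve2} is a typo; it is harmless downstream because in the proof of Theorem~\ref{T:weiss} that term is estimated only in absolute value (see \eqref{qui}). You should not present an argument as closing "exactly" without having checked it; had you done so, you would have caught the discrepancy and could have flagged the typo rather than silently endorsing it.
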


Given  $\delta\in (0,1)$, we introduce the set
\begin{equation}\label{lo}
\Lambda_\delta  = \left\{r\in (0,1)\mid H(r)> r^{n+a+2 + 2\delta}\right\}.
\end{equation}

\begin{lemma}\label{L:HD}
Assume  $v(0)=0$. There exist $C, r_0>0$, depending on $n, a, \|f\|_{L^\infty(\B_1)}$ and  $\delta\in (0,1)$, such that for $r\in \Lambda_\delta\cap (0,r_0)$ one has
\begin{equation}\label{HD}
H(r) \le C r D(r).
\end{equation}
\end{lemma}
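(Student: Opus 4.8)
The plan is to derive \eqref{HD} from a Poincar\'e-type inequality on the sphere applied to the trace of $v$ on $\partial\B_r$, combined with the divergence identity \eqref{ID} relating $I(r)$ to $D(r)$, and using the hypothesis $v(0)=0$ together with the definition of $\Lambda_\delta$ to absorb lower-order error terms. First I would recall that $H'(r) = \frac{n+a}{r}H(r) + 2I(r)$ from Lemma~\ref{L:H'}, and $I(r) = D(r) + \int_{\B_r} vf|y|^a$ from Lemma~\ref{L:ID}. The desired inequality $H(r)\le CrD(r)$ is essentially a reverse Poincar\'e inequality: on a sphere $\partial\B_r$ of radius $r$, for a function vanishing in an average sense one expects $\int_{\partial\B_r} v^2\,|y|^a\,d\sigma \lesssim r\int_{\B_r}|\nabla v|^2\,|y|^a\,dX$, but this fails without a normalization. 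The point of assuming $v(0)=0$ together with $r\in\Lambda_\delta$ is precisely to control the ``mean'' of $v$: since $v(0)=0$ and $v$ is H\"older continuous (by Theorem~\ref{schauder}), one has $|v(X)|\le C|X|^\alpha$ near the origin, hence $H(r)\le C r^{n+a+2\alpha}$ for small $r$; comparing with $H(r)>r^{n+a+2+2\delta}$ forces a lower bound that keeps $r$ away from the degenerate regime.

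The key steps, in order, would be: (1) use the trace/Poincar\'e inequality in the weighted space on $\B_r$ — rescaling to $\B_1$, one has $\int_{\partial\B_1} w^2\,|y|^a\,d\sigma \le C\big(\int_{\B_1} w^2\,|y|^a\,dX + \int_{\B_1}|\nabla w|^2\,|y|^a\,dX\big)$, which after scaling $w(X)=v(rX)$ gives $H(r) \le C\big(\tfrac1r G(r) + r D(r)\big)$ where $G(r)=\int_{\B_r}v^2|y|^a$; (2) bound $G(r)$ by $H$: since $G(r) = \int_0^r H(\rho)\,d\rho$ and, using the growth $H(\rho)\le C\rho^{n+a+2\alpha}$ from $v(0)=0$ and Schauder regularity, one gets $G(r)\le C r^{n+a+1+2\alpha} = o(r\cdot r^{n+a+2\delta})$ once $\alpha > \delta$ (shrinking $\delta$ if necessary, or more carefully iterating); (3) since $r\in\Lambda_\delta$ means $H(r)>r^{n+a+2+2\delta}$, the term $\tfrac1r G(r)$ is a small fraction of $H(r)$ for $r<r_0$, so it can be absorbed into the left-hand side, leaving $H(r)\le C r D(r)$; (4) the term $rD(r)$ on the right is what we want. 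A cleaner alternative for step (2)--(3): use the Caccioppoli inequality \eqref{caccio}, which gives $D(r)\ge$ (something) $-\,$ error, to go the other direction, but I think the direct route through $G(r) = \int_0^r H$ is more transparent.

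The main obstacle I anticipate is step (2): controlling $G(r) = \int_{\B_r} v^2 |y|^a$ in terms of $r H(r)$ in the regime $r\in\Lambda_\delta$. Naively $G(r)\le r H(r)$ fails because $H$ is not monotone; one must integrate $G(r)=\int_0^r H(\rho)d\rho$ and know that $H(\rho)$ does not blow up as $\rho\downarrow 0$ relative to $H(r)$ — here is where $v(0)=0$ and the H\"older bound $H(\rho)\le C\rho^{n+a+2\alpha}$ from Theorem~\ref{schauder} are indispensable, giving $G(r)\le \frac{C}{n+a+2\alpha+1} r^{n+a+2\alpha+1}$, and then comparing with $H(r)> r^{n+a+2+2\delta}$ yields $\frac1r G(r) \le C r^{2\alpha-2-2\delta} H(r)$, which is $\le \frac12 H(r)$ provided $\alpha>1+\delta$ — but $\alpha\in(0,1)$ is small, so this naive comparison is too crude. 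The fix is to not use the crude global H\"older exponent but rather to iterate/bootstrap: observe that $r\in\Lambda_\delta$ only constrains $r$, and for such $r$ one can instead argue that $G(r)\le C r\,H(2r)$ by a covering/dyadic argument or directly use that the trace inequality after absorbing gives a self-improving bound. Most likely the intended argument uses the rescaled trace inequality $H(r)\le C(\frac1r G(r) + rD(r))$ together with an interpolation $G(r)\le \epsilon r D(r) + C_\epsilon r^{-1}\int\cdots$ or simply invokes that $\frac1r G(r)\le C H(2r)$ and that $H(2r)\le C H(r)$ on $\Lambda_\delta$ via the monotonicity/doubling already available from the frequency function machinery — I would need to check which of these the paper has set up, and lean on whichever auxiliary doubling estimate for $H$ is cleanest. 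Once $\frac1r G(r)$ is absorbed, \eqref{HD} follows immediately.
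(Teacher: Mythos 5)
The paper does not actually prove Lemma~\ref{L:HD}; together with Lemmas~\ref{L:caccio}--\ref{L:DI} it is cited to \cite{CDS} and \cite{GPPS}, so there is no internal proof to compare against. On the merits, your direct approach has a genuine gap that you have, to your credit, largely diagnosed yourself: after the rescaled trace inequality $H(r)\le C\bigl(\tfrac1r G(r)+rD(r)\bigr)$, absorbing $\tfrac1r G(r)$ requires more than the crude H\"older growth $H(\rho)\le C\rho^{n+a+2\alpha}$, since the Schauder exponent satisfies $\alpha<1<1+\delta$. But the fixes you then sketch do not close it. A doubling estimate $H(2r)\le CH(r)$ on $\Lambda_\delta$ is precisely the kind of consequence of the Almgren monotonicity (Theorem~\ref{T:ellalmgren}) that is proved \emph{after} this lemma and relies on it, so invoking ``the frequency machinery already available'' would be circular; and $G(r)\le CrH(2r)$ by itself still leaves $H(2r)$ on the wrong side. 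The phrase ``self-improving bound'' in step (3) is a placeholder, not an argument, and I do not see how to turn it into one with only the estimates in hand.

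The argument that actually works (the one in \cite{GG} for $a=0$ and in \cite{GPPS} in this generality) is by compactness and contradiction. Suppose \eqref{HD} fails along $r_j\in\Lambda_\delta$, $r_j\to 0$, so $H(r_j)/(r_jD(r_j))\to\infty$. Form the Almgren rescalings $\tilde v_{r_j}(X)=v(r_jX)/d_{r_j}$ with $d_{r_j}=(H(r_j)/r_j^{n+a})^{1/2}$. Then $H(\tilde v_{r_j},1)\equiv 1$ and $D(\tilde v_{r_j},1)=r_jD(r_j)/H(r_j)\to 0$. The hypothesis $r_j\in\Lambda_\delta$ enters through the rescaled right-hand side: $L_a\tilde v_{r_j}=|y|^a\tilde f_j$ with $\tilde f_j(X)=\tfrac{r_j^2}{d_{r_j}}f(r_jX)$, and $d_{r_j}>r_j^{1+\delta}$ gives $\|\tilde f_j\|_{L^\infty}\le r_j^{1-\delta}\|f\|_{L^\infty}\to 0$. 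The Caccioppoli inequality (Lemma~\ref{L:caccio}), a Poincar\'e inequality with boundary values, and the Schauder estimates of Theorem~\ref{schauder} then give uniform $W^{1,2}(\B_1,|y|^a dX)$ and local $C^{\alpha}$ bounds, so along a subsequence $\tilde v_{r_j}\to v_0$ weakly in $W^{1,2}$, locally uniformly, and strongly in $L^2(\partial\B_1,|y|^a d\sigma)$ by compactness of the trace. Weak lower semicontinuity with $D(\tilde v_{r_j},1)\to 0$ forces $v_0$ to be constant, while $H(v_0,1)=1$. But $\tilde v_{r_j}(0)=v(0)/d_{r_j}=0$, so uniform convergence near the origin gives $v_0(0)=0$, hence $v_0\equiv 0$, a contradiction. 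This is where the hypothesis $v(0)=0$ is really used — to make the blow-up limit vanish at a point — not to supply a growth rate for $H$, which is the role you were trying to assign it.
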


We also need the following result.

\begin{lemma}\label{L:DI}
Assume $v(0)=0$. There exists $r_0>0$, depending on $n, a, \|f\|_{L^\infty(\B_1)}$ and  $\delta\in (0,1)$,  such that if $r\in \Lambda_\delta\cap (0,r_0)$, then one has
\[
D(r) \le 2 I(r).
\]
\end{lemma}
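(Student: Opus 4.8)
The plan is to derive the inequality $D(r)\le 2I(r)$ directly from Lemma~\ref{L:ID}, which gives $I(r) = D(r) + \int_{\BB} vf\,|y|^a$. Equivalently, we must show $\int_{\BB} vf\,|y|^a \ge -\tfrac12 D(r)$, i.e. that the ``error term'' $\left|\int_{\BB} vf\,|y|^a\right|$ can be absorbed into $\tfrac12 D(r)$ once $r$ is small and $r\in\Lambda_\delta$. So the first step is to bound the error term: by Cauchy--Schwarz in the weighted measure $|y|^a dX$ on $\B_r$, one has $\left|\int_{\BB} vf\,|y|^a\right| \le \|f\|_{L^\infty(\B_1)} \int_{\BB}|v|\,|y|^a \le \|f\|_{L^\infty(\B_1)} \big(\int_{\BB} v^2|y|^a\big)^{1/2}\big(\int_{\BB}|y|^a\big)^{1/2}$. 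Since $\int_{\BB}|y|^a\,dX = c_{n,a}\,r^{n+1+a}$, this is $\le C\|f\|_\infty\, r^{(n+1+a)/2}\,\big(\int_{\BB}v^2|y|^a\big)^{1/2}$. A cruder but sufficient bound is simply $\left|\int_{\BB}vf\,|y|^a\right| \le \|f\|_\infty\, r\, \big(\int_{\BB} v^2 |y|^a\big)^{1/2}\big(r^{-2}\int_{\BB}|y|^a\big)^{1/2}$, to be matched against the forms appearing in Lemma~\ref{L:HD}.

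Next I would invoke the hypotheses. Since $v(0)=0$ and $r\in\Lambda_\delta\cap(0,r_0)$, Lemma~\ref{L:HD} gives $H(r)\le CrD(r)$, and recall $G(r)=\int_{\BB}v^2|y|^a$ relates to $H$ by $G(r)=\int_0^r H(\rho)\,d\rho$, so $G(r)\le r\sup_{\rho\le r}H(\rho)$; combined with the monotonicity-type control on $H$ (or simply $H(\rho)\le C\rho D(\rho)\le C\rho D(r)$ for $\rho\le r$ after noting $D$ is essentially monotone, or using the definition of $\Lambda_\delta$ to compare scales), one gets $G(r)\le Cr^2 D(r)$, hence $\big(\int_{\BB}v^2|y|^a\big)^{1/2}\le C r\, D(r)^{1/2}$. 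Plugging this into the error-term estimate yields $\left|\int_{\BB}vf\,|y|^a\right|\le C\|f\|_\infty\, r^{1+(n+1+a)/2}\,D(r)^{1/2}$.

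Now comes the absorption step. We also need a lower bound on $D(r)$ to turn the $D(r)^{1/2}$ on the right into a full $D(r)$. On $\Lambda_\delta$ we have $H(r)> r^{n+a+2+2\delta}$, and by Lemma~\ref{L:HD}, $D(r)\ge c\,r^{-1}H(r) > c\,r^{n+a+1+2\delta}$. Therefore $D(r)^{1/2} = D(r)/D(r)^{1/2} \le D(r)\cdot c^{-1/2} r^{-(n+a+1+2\delta)/2}$, and substituting gives $\left|\int_{\BB}vf\,|y|^a\right| \le C\|f\|_\infty\, r^{1+(n+1+a)/2 - (n+a+1+2\delta)/2}\,D(r) = C\|f\|_\infty\, r^{3/2-\delta}\,D(r)$. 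Since $3/2-\delta>0$, choosing $r_0$ small enough (depending on $n,a,\|f\|_{L^\infty(\B_1)},\delta$) makes $C\|f\|_\infty r^{3/2-\delta}\le \tfrac12$ for all $r<r_0$, and hence $I(r) = D(r) + \int_{\BB}vf\,|y|^a \ge D(r) - \tfrac12 D(r) = \tfrac12 D(r)$, which is exactly $D(r)\le 2I(r)$.

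I expect the main technical obstacle to be bookkeeping the exponents correctly and justifying $G(r)\le Cr^2 D(r)$ cleanly on $\Lambda_\delta$ — one has to be a little careful since $\Lambda_\delta$ is only a subset of $(0,r_0)$ and Lemma~\ref{L:HD} is stated only for $r\in\Lambda_\delta$, so passing from $H$ at scale $r$ to the integral $G(r)=\int_0^r H$ over all smaller scales requires either a monotonicity property of $H(r)/r^{n+a}$ (which follows from Lemma~\ref{L:H'} together with the sign of $I$, or from the frequency being bounded) or a direct argument comparing $D(\rho)$ to $D(r)$. Everything else is a routine Cauchy--Schwarz plus choose-$r_0$-small argument; the key mechanism is that the gain $r^{3/2-\delta}$ beats the normalization, so the perturbation $\int vf$ is genuinely lower order relative to $D(r)$ on the ``non-degenerate'' scales $\Lambda_\delta$.
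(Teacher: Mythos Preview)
The paper does not actually prove Lemma~\ref{L:DI}; it is one of several results listed with the remark that ``proofs in this generality can be found, for instance, in \cite{CDS} and \cite{GPPS}.'' So there is no proof in the paper to compare against, and your argument must stand on its own.

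Your strategy is correct and is the standard one: use $I(r)=D(r)+\int_{\B_r}vf\,|y|^a$ from Lemma~\ref{L:ID}, bound the error by Cauchy--Schwarz, and absorb it using the lower bound on $D(r)$ coming from $r\in\Lambda_\delta$ together with Lemma~\ref{L:HD}. Two small points deserve attention. First, your final exponent is off by $1/2$: carrying the arithmetic through, the error is bounded by $C\|f\|_\infty\,r^{1-\delta}D(r)$, not $r^{3/2-\delta}D(r)$. This is harmless since $\delta\in(0,1)$ still gives a positive power. Second, the bound $G(r)\le Cr^2D(r)$ that you flag as delicate is easily justified by splitting $G(r)=\int_0^r H(\rho)\,d\rho$ according to whether $\rho\in\Lambda_\delta$ or not: for $\rho\in\Lambda_\delta\cap(0,r_0)$ use $H(\rho)\le C\rho D(\rho)\le C\rho D(r)$ (here $D$ is trivially nondecreasing), and for $\rho\notin\Lambda_\delta$ use $H(\rho)\le\rho^{n+a+2+2\delta}$; integrating both pieces and using $D(r)\ge c\,r^{n+a+1+2\delta}$ to absorb the second gives $G(r)\le C'r^2D(r)$. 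With these fixes your proof is complete.
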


At this point we can state the relevant monotonicity formula for the elliptic thin obstacle problem \eqref{epb222}, see \cite[Theorem~1.4]{GG} for the case $a = 0$. 

\begin{thrm}[Truncated Almgren type frequency formula]\label{T:ellalmgren}
Let $v$ solve the obstacle problem \eqref{epb222} and suppose that $0\in \Gamma(v)$. For any $\delta\in(0,1)$ there exist constants $C, r_0$, depending on $n, a, \|f\|_{L^\infty(\Rnn)}$ and $\delta$,  such that the function
\begin{equation}\label{generalized-frequency}
r\mapsto \Phi_\delta(v,r) \overset{\rm def}{=} r e^{C r^{1-\delta}} \frac{d}{dr}\log \max\left\{H(v,r),\ r^{n+a+2+2\delta}\right\},
\end{equation}
is monotone nondecreasing on $(0,r_0)$.
In particular, the following limit exists
\[
\kappa_v= \Phi_\delta(v,0^+) \overset{\rm def}{=} \lim_{r\to 0^+}\Phi_\delta(v,r).
\]
\end{thrm}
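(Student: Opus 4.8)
The plan is to establish monotonicity of a suitably truncated version of the classical Almgren frequency $N(r) = rI(r)/H(r)$, following the Garofalo--Garinkel type argument for $a=0$ in \cite[Theorem~1.4]{GG}, but working with the weighted quantities $H, G, D, I$ from \eqref{Hv}--\eqref{I} and the degenerate operator $L_a$. The truncation is forced by the fact that $f$ is only bounded: we cannot expect $H(v,r)$ to decay slower than every power of $r$, so we replace $H(v,r)$ by $\max\{H(v,r),\, r^{n+a+2+2\delta}\}$ and only claim monotonicity of $\Phi_\delta$ on an interval $(0,r_0)$. The whole argument splits according to whether $r\in\Lambda_\delta$ (the ``non-degenerate'' regime, where \eqref{lo} holds and Lemmas~\ref{L:HD} and \ref{L:DI} are available) or $r\notin\Lambda_\delta$.

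First I would record the logarithmic derivative. On the open set where $H(v,r) > r^{n+a+2+2\delta}$, one has $\frac{d}{dr}\log\max\{H,r^{n+a+2+2\delta}\} = H'(r)/H(r)$, and using Lemma~\ref{L:H'} this equals $\frac{n+a}{r} + \frac{2I(r)}{H(r)}$. So up to the explicit factor $\frac{n+a}{r}$ and the exponential weight $e^{Cr^{1-\delta}}$, $\Phi_\delta(v,r)$ is essentially $2N(r) + (n+a)$ times $e^{Cr^{1-\delta}}$. The core computation is then to differentiate $N(r) = rI(r)/H(r)$ and show that $\frac{d}{dr}\log N(r) \ge -C r^{-\delta}$ on $\Lambda_\delta\cap(0,r_0)$; this estimate, once established, is exactly what makes $r\mapsto e^{Cr^{1-\delta}}(\text{stuff})$ nondecreasing, because $\frac{d}{dr}(Cr^{1-\delta}) = C(1-\delta)r^{-\delta}$ absorbs the negative error. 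For the derivative of $\log N$ I would use $\frac{N'}{N} = \frac1r + \frac{I'}{I} - \frac{H'}{H}$, substitute \eqref{H'}, \eqref{ID}, \eqref{fve2}, and arrive at the familiar structure
\[
\frac{N'(r)}{N(r)} = \frac{2\left(\int_{\p\BB}\langle\nabla v,\nu\rangle^2|y|^a \int_{\p\BB}v^2|y|^a - \left(\int_{\p\BB}v\langle\nabla v,\nu\rangle|y|^a\right)^2\right)}{H(r)\,I(r)} + (\text{error terms in }f).
\]
The first term is nonnegative by Cauchy--Schwarz on $\p\BB$ with weight $|y|^a$; the point is that this is precisely where the weighted structure still cooperates, since $|y|^a\,d\sigma$ is a genuine positive measure. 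The error terms all involve $f$ and integrals like $\int_{\BB}vf|y|^a$, $\int_{\BB}\langle\nabla v,X\rangle f|y|^a$, $\int_{\p\BB}vf|y|^a$; these I would bound using $\|f\|_{L^\infty}$, Cauchy--Schwarz, and the comparisons $H(r)\le CrD(r)$ (Lemma~\ref{L:HD}), $D(r)\le 2I(r)$ (Lemma~\ref{L:DI}), together with the lower bound $H(r) > r^{n+a+2+2\delta}$ valid on $\Lambda_\delta$, to show each contributes at most $Cr^{-\delta}$ (this is where the $\delta$-loss enters, and why the sharp exponent in $\Lambda_\delta$ and in the truncation is $n+a+2+2\delta$). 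On the complementary set, where $\max\{H,r^{n+a+2+2\delta}\} = r^{n+a+2+2\delta}$ on an open interval, the logarithmic derivative is just the constant $(n+a+2+2\delta)/r$, so $\Phi_\delta$ there equals $e^{Cr^{1-\delta}}(n+a+2+2\delta)$, which is manifestly nondecreasing; and at the transition points one checks that $\Phi_\delta$ does not jump down — here it helps that on $\partial\Lambda_\delta$ the two branches agree and, because $\Lambda_\delta$ is defined by a strict inequality, $\{H>r^{n+a+2+2\delta}\}$ is open, so the set decomposes into countably many intervals and one only needs the one-sided limits to match up, which they do by continuity of $H$.

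I expect the main obstacle to be the bookkeeping of the $f$-dependent error terms and verifying that each is $O(r^{-\delta})$ relative to the main positive term, rather than merely $O(r^{-1})$ or worse: this requires using the $\Lambda_\delta$ lower bound on $H$ in the denominators at exactly the right places, and combining it with Lemma~\ref{L:HD} and Lemma~\ref{L:DI} (which themselves only hold on $\Lambda_\delta\cap(0,r_0)$) to trade powers of $r$ cleanly. A secondary subtlety is the behavior of $\Phi_\delta$ at the boundary between the two regimes and the resulting ``gluing'' of monotonicity across countably many intervals; this is handled as in \cite{GG}, noting that the exponential factor $e^{Cr^{1-\delta}}$ is continuous and increasing and that $\frac{d}{dr}\log\max\{\cdot,\cdot\}$ has at worst a jump \emph{upward} at such points (the max of two $C^1$ functions has a one-sided derivative that can only increase when the active branch switches to the steeper one). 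Since all the constituent lemmas (\ref{L:H'}, \ref{L:ID}, \ref{L:fve}, \ref{L:HD}, \ref{L:DI}) are already in hand, the proof is essentially the assembly of these pieces exactly as in the unweighted case, with $dx$ replaced throughout by $|y|^a\,dX$ and $d\sigma$ by $|y|^a\,d\sigma$, and I would simply refer to \cite{GG} for the details that are identical after this substitution.
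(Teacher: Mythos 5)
Your proposal is correct and follows exactly the route the paper intends: the paper does not give a self-contained proof of Theorem~\ref{T:ellalmgren} but instead assembles the constituent lemmas (\ref{L:H'}, \ref{L:ID}, \ref{L:fve}, \ref{L:HD}, \ref{L:DI}), points out that on $\Lambda_\delta$ one has $\Phi_\delta(v,r)=e^{Cr^{1-\delta}}(n+a+2N(v,r))$ via Lemma~\ref{L:H'}, and refers to the $a=0$ case in \cite[Theorem~1.4]{GG} for the assembly. Your decomposition into $\Lambda_\delta$ versus its complement, the Cauchy--Schwarz argument for the main term, the use of Lemmas~\ref{L:HD} and \ref{L:DI} together with the $\Lambda_\delta$ lower bound on $H$ to extract the $O(r^{-\delta})$ error, and the gluing across the boundary of $\Lambda_\delta$ (where the one-sided derivative of the max can only jump upward) is precisely the argument of \cite{GG} transcribed to the weighted measure $|y|^a\,dX$.
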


We now  define the family of nonhomogeneous  Almgren type rescalings. In the case $a = 0$ they were first introduced in \cite{ACS}.

\begin{dfn}\label{D:ar}
Let $v$ be a solution to \eqref{epb222} and assume that $0 \in \Gamma(v)$.   Consider the quantity
\[
d_r = \left(\frac{H(v,r)}{r^{n+a}}\right)^{1/2}.
\]
The \emph{Almgren rescalings} of $v$ at $X = 0$ are defined as follows
\begin{equation}\label{ar}
\tilde v_r(X) = \frac{v(rX)}{d_r},\quad X\in \B_{1/r}.
\end{equation}
\end{dfn}

The first obvious, yet important, observation is that
\begin{equation}\label{Har}
H(\tilde v_r,1) \equiv 1.
\end{equation}
Another basic property is the following: for every $0< r, \rho<1$ one has
\begin{equation}\label{Nar}
N(\tilde v_r,\rho) = N(v,r \rho).
\end{equation}

A crucial consequence of Theorem~\ref{T:ellalmgren} is the following compactness property of the Almgren rescalings.

\begin{thrm}\label{conv}
There exists a sequence $r_j \searrow 0$ such that for some $\gamma>0$ one has $\tilde v_{r_j} \to v_0$  in $C_a^{1+\gamma}(K)$ on compact subsets of $K$ of $\overline{\Rnp}$. Here, $v_0$ is a global solution of  the thin obstacle problem \eqref{epb222} with $f \equiv 0$. Also, $v_0$  is homogeneous of degree $(\kappa_v - n - a)/2$. Moreover, when $\kappa_v= n+ a + 2\kappa_0 = n+3$, we have that after a rotation of coordinates in $\Rn$, $v_0$ is of the form \eqref{hatv0}. 
\end{thrm}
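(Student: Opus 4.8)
The plan is to follow the now-classical blowup scheme for Almgren-type rescalings, adapted to the weighted degenerate setting. First I would fix a sequence $r_j \searrow 0$ along which the truncated frequency $\Phi_\delta(v, r_j)$ converges to $\kappa_v$ (this is automatic by Theorem~\ref{T:ellalmgren}). The key point is that, because $0 \in \Gamma(v)$, one has $v(0) = 0$ and for $r$ small the set $\Lambda_\delta$ contains intervals approaching $0$; on such scales Lemmas~\ref{L:HD} and \ref{L:DI} give $H(r) \le CrD(r)$ and $D(r) \le 2I(r)$, which together with \eqref{Nar} force a uniform frequency bound $N(\tilde v_{r_j}, \rho) = N(v, r_j\rho) \le C$ on fixed compact scales. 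Combined with the normalization \eqref{Har}, the Caccioppoli inequality (Lemma~\ref{L:caccio}), and the fact that the rescaled right-hand sides $f_{r_j}(X) = r_j^2 d_{r_j}^{-1} f(r_j X)$ tend to zero in $L^\infty_{\rm loc}$ (using the decay $d_{r_j}^{-1} r_j^2 \to 0$, which follows because $H(v,r) \ge r^{n+a+2+2\delta}$ on the relevant scales while $\kappa_v < n+a+2+2\delta$ is excluded by choosing $\delta$ small relative to $\kappa_v$, so $d_{r_j}$ decays slower than $r_j^2$), one obtains uniform $W^{1,2}(\B_R, |y|^a)$ bounds for the $\tilde v_{r_j}$ on every ball $\B_R$.

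Next I would upgrade this weak compactness to $C^{1+\gamma}_a$ convergence. By the Schauder-type estimate Theorem~\ref{schauder} applied to each $\tilde v_{r_j}$ (which solves the thin obstacle problem \eqref{epb222} with right-hand side $f_{r_j}$, uniformly bounded in $L^\infty$), the $\tilde v_{r_j}$ are uniformly bounded in $C^{1+\alpha}_a(\overline{\B_R^+})$ for some $\alpha > 0$; then Ascoli–Arzelà yields a subsequence converging in $C^{1+\gamma}_a$ for every $\gamma < \alpha$ on compact subsets of $\overline{\Rnp}$ to some limit $v_0$. Passing to the limit in the weak formulation \eqref{epb222w} and in the Signorini conditions (the obstacle constraints are stable under $C^1_{\rm loc}$ convergence) shows $v_0$ is a global solution of \eqref{epb222} with $f \equiv 0$, and $H(v_0, 1) = 1$ so $v_0 \not\equiv 0$.

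Then I would establish the homogeneity of $v_0$. The standard argument: for fixed $0 < \rho < \sigma$, the frequency monotonicity and the convergence $N(\tilde v_{r_j}, \rho) = N(v, r_j\rho) \to \kappa_v'$ (where $\kappa_v' = (\kappa_v - n - a)/2$ after accounting for the truncation normalization — I would need to carefully track how $\Phi_\delta$ relates to the homogeneity exponent via $H'(r) = \frac{n+a}{r}H(r) + 2I(r)$ from Lemma~\ref{L:H'}) gives $N(v_0, \rho) = N(v_0, \sigma)$, i.e.\ the frequency of $v_0$ is constant. For a solution of the homogeneous problem with $f \equiv 0$, constancy of the Almgren frequency forces, via the Rellich–Nečas / first-variation identities in Lemma~\ref{L:fve} (now with $f = 0$), that $\langle \nabla v_0, X\rangle = \kappa' v_0$ in the appropriate weak sense, hence $v_0$ is homogeneous of degree $\kappa' = (\kappa_v - n - a)/2$.

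Finally, for the case $\kappa_v = n + 3 = n + a + 2\kappa_0$, the homogeneity degree is exactly $\kappa_0 = (3-a)/2$, so $v_0$ is a $\kappa_0$-homogeneous global solution of the thin obstacle problem with zero obstacle; invoking the classification of such solutions from \cite{GPPS} (the homogeneous solutions of minimal degree $\kappa_0$ are, up to rotation in $\Rn$ and a positive multiplicative constant, precisely $\hat v_0$ in \eqref{hatv0}), we conclude that $v_0$ has the stated form after a rotation of coordinates. The main obstacle I anticipate is the bookkeeping in the first paragraph: showing that the rescaled forcing terms genuinely vanish in the limit requires the lower bound on $d_{r_j}$ afforded by working on scales in $\Lambda_\delta$ together with the gap information, and one must be careful that the truncation in $\Phi_\delta$ (the $\max$ with $r^{n+a+2+2\delta}$) does not interfere — i.e.\ one must verify that $H(v,r)$ stays above the truncation threshold along $r_j$, which is where the hypothesis $\kappa_v < n+a+2+2\delta$ (equivalently, choosing $\delta$ small depending on $\kappa_v$) is used.
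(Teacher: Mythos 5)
Your proposal follows the same blowup scheme the paper invokes (the paper's proof is only a three-line sketch citing \cite{CSS}, \cite{GG}, \cite{CDS}, the scaling properties of the frequency, Theorem~\ref{schauder}, Ascoli--Arzel\`a, and \cite[Proposition~5.5]{CSS}), and you supply essentially the details the paper leaves implicit: the uniform $W^{1,2}$ bounds via the frequency bound and Caccioppoli, the upgrade to $C^{1+\gamma}_a$ via the Schauder estimate, the passage to the limit in the weak formulation and Signorini conditions, homogeneity via constancy of the frequency and the first-variation identities of Lemma~\ref{L:fve}, and the classification of the $\kappa_0$-homogeneous blowup (you cite \cite{GPPS} where the paper cites \cite[Proposition~5.5]{CSS}; these are interchangeable).

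There is, however, one genuine slip in the bookkeeping you flag yourself as delicate. You twice state that keeping $H(v,r)$ above the truncation threshold requires ``choosing $\delta$ small relative to $\kappa_v$.'' The direction is reversed. One has $r\in\Lambda_\delta$ for small $r$ precisely when $\kappa_v<n+a+2+2\delta$, that is $\delta>\frac{\kappa_v-n-a-2}{2}$, so $\delta$ must be taken \emph{large enough} (while still $<1$). For the regular case $\kappa_v=n+3$ this is exactly $\delta>\frac{1-a}{2}$, which is the hypothesis $1+\delta>\kappa_0$ appearing in Lemma~\ref{L:gap}. Choosing $\delta$ small would push you into the truncated regime where $d_{r_j}$ may decay at rate $r_j^{1+\delta}$ or faster and the rescaled forcing $r_j^2 d_{r_j}^{-1}f(r_j\cdot)$ need not vanish; it is precisely the constraint $\delta<1$, together with $r_j\in\Lambda_\delta$ giving $d_{r_j}>r_j^{1+\delta}$, that yields $d_{r_j}^{-1}r_j^2<r_j^{1-\delta}\to0$. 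Also, the phrase ``$\kappa_v<n+a+2+2\delta$ is excluded'' should read ``$\kappa_v\ge n+a+2+2\delta$ is excluded,'' consistent with the rest of your argument. With these two corrections the argument is sound.
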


\begin{proof}

First, similarly to \cite{CSS}, \cite{GG} and \cite{CDS}, from Theorem~\ref{T:ellalmgren}, the scaling properties of the frequency and from energy considerations, we infer that: 
\begin{enumerate}[1)]
\item $\tilde v_{r_j} \to v_0$  in $W^{1,2}_{\rm loc}( \overline{\Rnp}, |y|^a dX)$;   
\item $v_0$ is homogeneous of degree $(\kappa_v - n - a)/2$.
\end{enumerate}
  The convergence in $C_a^{1+\gamma}$ is then a consequence of the uniform Schauder estimates in Theorem~\ref{schauder} and the theorem of Ascoli-Arzel\`a.  
Finally, in the case when  $\kappa_v= n+a +2 \kappa_0$, the fact that $v_0$  takes the form \eqref{hatv0} follows from 
\cite[Proposition~5.5]{CSS}. 
\end{proof}

We also  have the following result on the frequency gap. We refer to  the discussion on page 926 in \cite{CDS} for a proof of this fact. Notice that although the functional in \cite{CDS} is a bit different from our $\Phi_\delta$, nevertheless both   have the same limit as $r \to 0$.

\begin{lemma}\label{L:gap}
Let $1+\delta> \kappa_0 = \frac{3-a}{2}$. Then, either $\Phi_\delta(v,0^+) = n + a + 2\kappa_0$, or 
$\Phi_\delta(v,0^+) \ge n + a + 2 + 2\delta$.
\end{lemma}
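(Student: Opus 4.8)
The plan is to combine the truncated Almgren monotonicity formula (Theorem~\ref{T:ellalmgren}) with the compactness of the Almgren rescalings (Theorem~\ref{conv}) and the classification of homogeneous global solutions of the thin obstacle problem of homogeneity below $2$. First I would record that, since $0\in\Gamma(v)\subset\Lambda(v)$, one has $v(0)=0$, so Lemmas~\ref{L:HD} and \ref{L:DI} apply; and that, by Theorem~\ref{T:ellalmgren}, the limit $\kappa_v=\Phi_\delta(v,0^+)$ exists. Set $\mu=(\kappa_v-n-a)/2$, the homogeneity of the prospective blowup. If $\kappa_v\geq n+a+2+2\delta$ the second alternative holds and there is nothing to prove, so from now on I would assume $\kappa_v<n+a+2+2\delta$, equivalently $\mu<1+\delta$, and the goal is to establish $\mu=\kappa_0$.

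In this case I would first observe that the truncation in $\Phi_\delta$ is eventually inactive: were there $r_k\searrow 0$ with $H(v,r_k)<r_k^{n+a+2+2\delta}$, then near each $r_k$ one would have $\max\{H(v,r),r^{n+a+2+2\delta}\}=r^{n+a+2+2\delta}$, hence $\Phi_\delta(v,r_k)=e^{Cr_k^{1-\delta}}(n+a+2+2\delta)$, and letting $k\to\infty$ would force $\kappa_v=n+a+2+2\delta$, contradicting our assumption. Thus $H(v,r)\geq r^{n+a+2+2\delta}$ on $(0,r_0)$, and Lemma~\ref{L:H'} then gives $\Phi_\delta(v,r)=e^{Cr^{1-\delta}}\big(n+a+2N(v,r)\big)$ there, so $N(v,r)\to\mu$ as $r\to 0^+$; moreover Lemmas~\ref{L:HD} and \ref{L:DI} make $D(v,r)$ comparable to $H(v,r)/r$ near $0$, which supplies the uniform energy bound required for the rescalings. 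I would then pass to the Almgren rescalings $\tilde v_r$ of Definition~\ref{D:ar}, for which $H(\tilde v_r,1)\equiv 1$ and $N(\tilde v_r,\rho)=N(v,r\rho)\to\mu$, and invoke Theorem~\ref{conv} to extract a subsequence $\tilde v_{r_j}\to v_0$ in $C_a^{1+\gamma}$ on compact subsets of $\overline{\Rnp}$; here $v_0\not\equiv 0$ (as $H(v_0,1)=1$) is a global solution of \eqref{epb222} with $f\equiv 0$, homogeneous of degree $\mu$. Since $\delta\in(0,1)$, we have $\mu<1+\delta<2$, so $v_0$ is a nonzero homogeneous global solution of homogeneity strictly below $2$.

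The last step --- which I expect to be the main obstacle --- is the classification of such solutions: a nonzero global solution of the $|y|^a$-weighted thin obstacle problem homogeneous of degree $\mu<2$ must have $\mu=\kappa_0=\frac{3-a}{2}$ and, after a rotation of the coordinates in $\Rn$, equal a multiple of $\hat v_0$ from \eqref{hatv0}. This is exactly \cite[Proposition~5.5]{CSS} (see also \cite{CDS,GPPS}), and its proof is the delicate part: it analyzes the contact set, reduces to an eigenvalue problem on the unit sphere in $\Rnn$, and rules out homogeneities in $(0,\kappa_0)$ and in $(\kappa_0,2)$ by an Athanasopoulos--Caffarelli--Salsa type dimension-reduction argument on tangential derivatives. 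Granting this, $\mu=\kappa_0$, hence $\kappa_v=n+a+2\kappa_0$, which is the first alternative. I would close by remarking, as the authors do, that although the frequency functional used in \cite{CDS} is slightly different from $\Phi_\delta$, the two share the same limit as $r\to 0^+$, so the classification transfers verbatim.
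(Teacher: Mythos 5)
Your proposal takes essentially the same route as the paper, which simply defers Lemma~\ref{L:gap} to the discussion on page~926 of \cite{CDS}; you are correct that the argument there amounts to (i) observing that if $\Phi_\delta(v,0^+)<n+a+2+2\delta$ then the truncation in \eqref{generalized-frequency} is eventually inactive so that $\Phi_\delta(v,r)=e^{Cr^{1-\delta}}(n+a+2N(v,r))$, (ii) blowing up with the Almgren rescalings via Theorem~\ref{conv} to obtain a nonzero $\mu$-homogeneous global solution $v_0$ with $\mu=(\kappa_v-n-a)/2<1+\delta<2$, and (iii) invoking the classification/gap result for homogeneous solutions. Your removal-of-the-truncation argument and the identification $N(v,r)\to\mu$ via Lemma~\ref{L:H'} are both correct.

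One point where your phrasing is looser than the actual result you are invoking: the sentence ``a nonzero global solution of the $|y|^a$-weighted thin obstacle problem homogeneous of degree $\mu<2$ must have $\mu=\kappa_0$'' is not literally true. For instance $v_0(X)=-|y|^{1-a}$ is even in $y$, solves $L_a v_0=0$ in $\B_1^\pm$, has $v_0\equiv 0$ and $-\partial_y^a v_0=(1-a)>0$ on $\{y=0\}$, so it is a nonzero global solution with $f\equiv 0$, homogeneous of degree $1-a$, and $1-a<\kappa_0$ since $\kappa_0-(1-a)=\frac{1+a}{2}>0$. This $v_0$ has $\Lambda(v_0)=B_1$ and empty free boundary, and it is precisely the kind of degenerate blowup the actual gap result must rule out. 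The correct statement (and the content of the CSS/CDS argument) is that homogeneous global solutions \emph{with nonempty free boundary} --- or, equivalently, blowups arising at a genuine free boundary point, which inherit the nondegeneracy $\{v_0>0\}\cap\{y=0\}\neq\emptyset$ --- have $\mu\in\{\kappa_0\}\cup[2,\infty)$. This is where the lower bound $\mu\ge\kappa_0$ and the dimension-reduction/eigenvalue argument are used; your proof is fine once the classification is quoted with this additional hypothesis, and that hypothesis does hold for the $v_0$ produced by Theorem~\ref{conv} by the nondegeneracy considerations in \cite{CDS} and \cite{CSS}.
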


\begin{dfn}\label{D:reg}
Let $v$ be a solution of \eqref{epb222}. We say that $0\in \Gamma(v)$ is a \emph{regular free boundary point} if $
\Phi_\delta(v,0^+) = n + a + 2 \kappa_0$. Likewise,  we say that $X_0 = (x_0,0)$ is regular if such is the point $(0,0)$ for the function $v_{X_0}(X) = v(X+X_0)$. We denote by $\Gamma_{\kappa_0}(v)$ the set of all regular free
boundary points and we call it the \emph{regular set} of $v$.
\end{dfn}

We recall that if $U$ is a solution to \eqref{e0}, then $U(\cdot, t_0)$ solves the elliptic thin obstacle problem \eqref{epb222} in $\B_1^+$  with $f= U_t (\cdot, t_0)+ F(\cdot, t_0)$. We have the following lemma, which follows from \cite[Theorem~7.3]{BDGP2}. See also the discussion in   Remark 7.4 in the same paper.

\begin{lemma}\label{equiv}
Let $U$ be a solution to  \eqref{e0}. Then $(x_0, t_0)$ is a regular free boundary point for $U$ in the sense of Definition \ref{reg} if and only if $x_0$ is a regular free boundary point for $U(\cdot, t_0)$ in the sense of Definition \ref{D:reg}. 
\end{lemma}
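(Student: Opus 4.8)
The plan is to prove Lemma~\ref{equiv} by relating the parabolic frequency $\kappa_U(x_0,t_0)$ from Theorem~\ref{T:poon} to the elliptic frequency $\Phi_\delta(U(\cdot,t_0),0^+)$ from Theorem~\ref{T:ellalmgren}, showing that both characterizations of ``regular'' amount to the solution having the lowest admissible homogeneity $\kappa_0$ at the point in question. After a translation we may assume $(x_0,t_0)=(0,0)$. The statement we want is that $\kappa_U(0,0)=\kappa_0$ (the parabolic value in Definition~\ref{reg}) if and only if $\Phi_\delta(U(\cdot,0),0^+)=n+a+2\kappa_0$ (the elliptic value in Definition~\ref{D:reg}). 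Both statements say that the vanishing order of $U$ at the free boundary point, measured in the appropriate (parabolic vs.\ spatial) scaling, is exactly $\kappa_0$; the content of the lemma is that the parabolic and spatial blowups see the same leading order. The reference to \cite[Theorem~7.3]{BDGP2} indicates that the genuine work is done there, so the proof here should be a short deduction from that result plus the gap lemmas already available in this excerpt.

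The key steps, in order, would be as follows. First, invoke the observation recorded just before the lemma: if $U$ solves \eqref{e0}, then $u:=U(\cdot,t_0)$ solves the elliptic thin obstacle problem \eqref{epb222} in $\B_1^+$ with bounded right-hand side $f=U_t(\cdot,t_0)+F(\cdot,t_0)$ (bounded by Theorem~\ref{localized} and \eqref{fbound}), and that $(0,0)\in\Gamma_*(U)$ forces $0\in\Gamma(u)$ by \eqref{rel}. Second, quote \cite[Theorem~7.3]{BDGP2}: this is the statement that the parabolic Almgren--Poon frequency $\kappa_U(0,0)$ and the elliptic truncated Almgren frequency $\kappa_u=\Phi_\delta(u,0^+)$ are related by $\kappa_u=n+a+2\kappa_U(0,0)$ whenever $\kappa_U(0,0)$ lies below the truncation threshold (equivalently, the parabolic and spatial blowups at the point have matching degrees of homogeneity). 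Third, specialize: by Lemma~\ref{gap1} the parabolic frequency satisfies either $\kappa_U(0,0)=\kappa_0$ or $\kappa_U(0,0)\geq 2$, and since $\kappa_0=(3-a)/2<2$ for $a>-1$, the value $\kappa_0$ is isolated from above; likewise by Lemma~\ref{L:gap} the elliptic frequency satisfies either $\Phi_\delta(u,0^+)=n+a+2\kappa_0$ or $\Phi_\delta(u,0^+)\geq n+a+2+2\delta$. Combining the identity $\kappa_u=n+a+2\kappa_U(0,0)$ with these two gap dichotomies gives the biconditional: $\kappa_U(0,0)=\kappa_0\iff\Phi_\delta(u,0^+)=n+a+2\kappa_0$. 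Finally, one should check the choice of the truncation parameters $\ell,\sigma$ (parabolic) and $\delta$ (elliptic) is consistent, so that $\kappa_0$ and $n+a+2\kappa_0$ both fall strictly below the respective thresholds; this just requires $1+\delta>\kappa_0$ and $\ell-1+\sigma\geq\kappa_0$, which hold for $\ell\geq 4$ and $\delta,\sigma$ close to $1$.

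The main obstacle, and the reason the proof is essentially a citation, is the frequency identity $\kappa_u=n+a+2\kappa_U(0,0)$ itself: this requires comparing a genuinely parabolic monotonicity quantity (built from the backward heat kernel $\Gb$ and the Gaussian-weighted $H^{\rm par}$) with a purely spatial one (built from spherical integrals $H(v,r)$), and the bridge is that on the regular set the solution is, after blowup, time-independent — so the parabolic blowup at $(0,0)$ is an elliptic blowup with a trivial time direction, and the frequencies match up to the dimensional shift $n+a+2(\cdot)$ that converts homogeneity degree to the elliptic frequency normalization. Since \cite{BDGP2} is a forthcoming companion paper whose results are being quoted, here I would simply cite \cite[Theorem~7.3 and Remark~7.4]{BDGP2} for this identity and then run the short gap argument above. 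One subtlety worth a sentence: one must make sure the ``regular'' point is the same point under both descriptions, i.e.\ that $(0,0)\in\Gamma_*(U)$ corresponds under the reduction to $0\in\Gamma(U(\cdot,0))$ and not to some nearby point; this is immediate from \eqref{rel} together with the continuity of $\nabla_x U$, $\partial_y^a U$ and $U$ on $\{y=0\}$ established in Section~\ref{lreg}.
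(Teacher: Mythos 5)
Your proposal is correct and is, in substance, exactly what the paper does: the paper's ``proof'' of Lemma~\ref{equiv} is the single sentence immediately preceding it, which records the reduction to the elliptic thin obstacle problem at each time slice and then cites \cite[Theorem~7.3 and Remark~7.4]{BDGP2}. You have correctly identified that this citation must supply a frequency identity of the form $\Phi_\delta(U(\cdot,t_0),0^+) = n+a+2\,\kappa_U(0,0)$ (the two normalizations are consistent: a $\kappa$-homogeneous spatial blowup gives $H(v,r)\sim r^{n+a+2\kappa}$ whereas a $\kappa$-parabolically-homogeneous blowup gives $H^{\rm par}(U,r)\sim r^{2\kappa}$, matching the factor $\tfrac12$ in \eqref{parfreq}), and you correctly flag the reduction as in the sentence before the lemma and the point-matching issue via \eqref{rel}. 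The only small thing to note is that once the identity $\kappa_u=n+a+2\kappa_U$ is in hand the biconditional is immediate, so the gap Lemmas~\ref{gap1} and~\ref{L:gap} are not strictly needed for the equivalence itself; they are, however, the right thing to invoke if (as is plausible) \cite[Theorem~7.3]{BDGP2} only asserts the identity below the truncation thresholds, so keeping them in the argument is prudent rather than redundant.
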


In particular, we see that $\Gamma_{\kappa_0}(U)$ is fully contained in $\Gamma(U)$, rather than in the extended free boundary $\Gamma_*(U)$. We further note that  by arguing as in the proof of Lemma~10.5 in \cite{DGPT} for the case $a=0$, we can show that $(x,t) \mapsto \kappa_U(x, t)$
is upper semicontinuous on $\Gamma(U)$. Consequently,  in view of Lemma~\ref{gap1}, we can assert that the following holds.

\begin{lemma}\label{open}
Let $U$ solve \eqref{e0}. Then, the regular set $\Gamma_{\kappa_0}(U)$ is a relatively open subset of the free boundary $\Gamma(U)$. 
\end{lemma}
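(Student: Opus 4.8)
The plan is to combine two ingredients already established in the excerpt: the frequency gap of Lemma~\ref{gap1} (lowest admissible frequency is $\kappa_0$, the next value is $\geq 2$), and the upper semicontinuity of the map $(x,t)\mapsto \kappa_U(x,t)$ on $\Gamma(U)$. First I would recall that $\kappa_0 = \frac{3-a}{2} = 1+s < 2$ for every $a\in(-1,1)$, since $s = \frac{1-a}{2}\in(0,1)$; thus there is a genuine gap separating the regular value $\kappa_0$ from the next possible frequency value $2$. Fix $(x_0,t_0)\in\Gamma_{\kappa_0}(U)$, so that $\kappa_U(x_0,t_0) = \kappa_0$. By Lemma~\ref{equiv}, $\Gamma_{\kappa_0}(U)\subset\Gamma(U)$ (and not merely in $\Gamma_*(U)$), so it suffices to produce a parabolic neighborhood of $(x_0,t_0)$ in $\Gamma(U)$ consisting entirely of points of frequency $\kappa_0$.

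The argument is a standard semicontinuity-plus-gap argument. Suppose, for contradiction, that there is a sequence $(x_j,t_j)\in\Gamma(U)$ with $(x_j,t_j)\to(x_0,t_0)$ but $(x_j,t_j)\notin\Gamma_{\kappa_0}(U)$. Each $(x_j,t_j)$ is a free boundary point, hence also an extended free boundary point, so $\kappa_U(x_j,t_j)$ is well defined via Theorem~\ref{T:poon}. By Lemma~\ref{gap1} applied at $(x_j,t_j)$ (choosing $\ell\geq 4$ and $\sigma$ so that $\kappa_0 \leq \ell-1+\sigma$, which holds since $\kappa_0 < 2 \leq \ell-1$), and using that $(x_j,t_j)$ is not regular, we get $\kappa_U(x_j,t_j)\geq 2$. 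On the other hand, upper semicontinuity of $\kappa_U$ on $\Gamma(U)$ gives
\[
\limsup_{j\to\infty}\kappa_U(x_j,t_j) \leq \kappa_U(x_0,t_0) = \kappa_0 < 2,
\]
which contradicts $\kappa_U(x_j,t_j)\geq 2$ for all $j$. Hence no such sequence exists, and $\Gamma_{\kappa_0}(U)$ is relatively open in $\Gamma(U)$.

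The main point requiring care is the applicability of Lemma~\ref{gap1} uniformly at the nearby points $(x_j,t_j)$: one must check that the hypothesis $\kappa_U(x_j,t_j)\leq \ell-1+\sigma$ can be arranged, but this is automatic for $j$ large because upper semicontinuity forces $\kappa_U(x_j,t_j)$ to be close to (hence bounded by a constant slightly above) $\kappa_0$, well below $\ell-1+\sigma$. Strictly speaking one should first restrict to a small enough neighborhood where $\kappa_U \leq \kappa_0 + \eta$ for some small $\eta$ with $\kappa_0+\eta < 2$, again by upper semicontinuity, and then apply the gap lemma there. The only genuinely external inputs are the two cited facts (Lemma~\ref{gap1}, proved in \cite{BDGP2}, and the upper semicontinuity of $\kappa_U$, established by adapting the proof of Lemma~10.5 in \cite{DGPT}); granting those, the proof is essentially the two displayed lines above. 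I do not anticipate any serious obstacle beyond bookkeeping the choice of $\ell$ and $\sigma$.
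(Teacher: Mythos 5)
Your proposal is correct and takes essentially the same approach as the paper, which derives the lemma from the upper semicontinuity of $(x,t)\mapsto\kappa_U(x,t)$ on $\Gamma(U)$ together with the frequency gap of Lemma~\ref{gap1}. You correctly spell out the standard semicontinuity-plus-gap argument that the paper leaves to the reader, including the minor bookkeeping needed to verify the hypothesis $\kappa_U(x_j,t_j)\le \ell-1+\sigma$ before invoking the gap lemma.
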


With the aid of Lemma~\ref{equiv}, we next show that, near a regular point, the free boundary is $\mathbb{H}^{1+\alpha, \frac{1+\alpha}{2}}$ regular for some $\alpha>0$. This will be achieved via   reduction to the elliptic thin obstacle problem satisfied  by $U(\cdot, t)$. Besides Lemma~\ref{equiv}, the other main tool in such reduction  is the epiperimetric inequality established in \cite{GPPS}  which we now describe.

\subsection{An epiperimetric inequality}\label{S:epi}

In order to state the epiperimetric inequality,   we introduce the relevant Weiss type functional. 

\begin{dfn}\label{D:weiss}
Let $v\in W^{1,2}_{\rm loc}(\B_1,|y|^a dX)\cap C(\B_1)$. For a given $\kappa \ge 0$ we  define the $\kappa$-th \emph{Weiss-type functional} $r\to \mathscr W_\kappa(v,r)$ as
\begin{equation}\label{weiss}
\mathscr W_\kappa(v,r) \overset{\rm def}{=}\frac{1}{r^{n+a-1+2\kappa}} I(v,r) -\frac{\kappa}{r^{n+a+2\kappa}} H(v,r).
\end{equation}
\end{dfn}

It is important to observe right away that if $v$ is a $\kappa$-homogeneous solution to \eqref{epb222} we have from \eqref{I} 
\[
I(v,r) =\int_{\partial \BB}v \langle\nabla v,\nu\rangle |y|^a = \frac{\kappa}{r} \int_{\partial \BB} v^2 |y|^a = \frac{\kappa}{r} H(v,r),
\]
where in the last equality we have used Euler's formula $\langle\nabla v,\nu\rangle = r^{-1}\langle\nabla v,X\rangle = \kappa r^{-1} v$. This identity implies that $\mathscr W_{\kappa}(v,r) \equiv 0$ for $0<r \le 1$. 
Also, to provide the reader with an understanding of the powers of $r$ in the definition \eqref{weiss}, we note that the dimension of the measure $|y|^a dX$ is $Q = n+a+1$, and thus $n+a-1+2\kappa = Q-2 + 2\kappa$, whereas $n+a+2\kappa = Q-1+ 2\kappa$. In terms of the dimension $Q$ the powers in \eqref{weiss} are thus in tune with the one-parameter family of Weiss-type functionals introduced in \cite{GP} for the classical Signorini problem (which we recall corresponds to the case  $a = 0$). The reader should also see Theorem~3.8 in \cite{GRO} and Lemma~7.3 in the same paper, where the case of higher homogeneities $\kappa \ge 2$ was treated in the analysis of singular free boundary points.

In the sequel we will be particularly interested in the minimal  homogeneity \eqref{kappazero}.
As a consequence, we have $n+a-1+2\kappa_0 = n+2$, $n+a+2\kappa_0 = n+3$, and the corresponding Weiss-type functional in \eqref{weiss} becomes
\begin{equation}\label{weissk0}
\mathscr W_{\kappa_0}(v,r) = \frac{1}{r^{n+2}} I(v,r) -\frac{\kappa_0}{r^{n+3}} H(v,r).
\end{equation}
It is worth noting that we can also write \eqref{weissk0} in the suggestive form
\begin{equation}\label{weissk0-0}
\mathscr W_{\kappa_0}(v,r) = \frac{H(v,r)}{r^{n+3}}\left(N(v,r) - \kappa_0\right).
\end{equation}
For brevity, we will hereafter drop the subscript $\kappa_0$, and simply write $\mathscr W(v,r)$. 

\begin{thrm}\label{T:weiss}
Let $v$ be a solution to \eqref{epb222} and suppose that $0\in \G(v)$ and that 
\[
\Phi(v,0^+) \ge n + a + 2 \kappa_0 = n+3.
\]
Then, there exist constants $C, r_0>0$,
depending on $n, a$ and $\|f\|_{L^\infty(\B_1)}$, such that for every
$0<r<r_0$ one has
\begin{equation}\label{W'}
\frac{d}{dr}\left(\mathscr W(v,r) + C r^{\frac{1+a}2}\right) \ge \frac{2}{r^{n+2}}
\int_{\p \B_r}\left(\langle\nabla v,\nu\rangle -
\frac{\kappa_0}{r} v\right)^2 |y|^a.
\end{equation}
In particular, the function $r\mapsto
\mathscr W(v,r)+Cr^{\frac{1+a}2}$ is monotone non\-de\-creasing and therefore it has a limit as $r\to 0^+$. Since $a\in (-1,1)$, we conclude that also the following 
limit exists
\[
\mathscr W(v,0^+) \overset{\rm def}{=}  \lim\limits_{r\rightarrow 0^+} \mathscr W(v,r).
\]
\end{thrm}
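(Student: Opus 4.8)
The plan is to prove the differential inequality \eqref{W'} by a direct computation of $\frac{d}{dr}\mathscr W(v,r)$, using the representation \eqref{weissk0} together with the first-variation formulas from Lemma~\ref{L:H'}, Lemma~\ref{L:ID}, and Lemma~\ref{L:fve}. Specifically, writing $\mathscr W(v,r) = r^{-(n+2)} I(r) - \kappa_0 r^{-(n+3)} H(r)$, I would differentiate and substitute $H'(r) = \tfrac{n+a}{r}H(r) + 2I(r)$ and the expression \eqref{fve2} for $I'(r)$. After the algebraic dust settles, the ``main'' term should be
\[
\frac{2}{r^{n+2}}\int_{\p\B_r}\Big(\langle\nabla v,\nu\rangle - \frac{\kappa_0}{r}v\Big)^2|y|^a,
\]
which arises from combining $2r^{-(n+2)}\int_{\p\B_r}\langle\nabla v,\nu\rangle^2|y|^a$, a cross term $-\tfrac{4\kappa_0}{r}r^{-(n+2)}I(r)$ produced by differentiating the coefficients and using $I(r) = \int_{\p\B_r}v\langle\nabla v,\nu\rangle|y|^a$ up to the $f$-correction in Lemma~\ref{L:ID}, and the term $2\kappa_0^2 r^{-(n+4)}H(r)$ coming from $H'$; here the precise choice $\kappa_0 = \tfrac{3-a}{2}$ is what makes the ``spurious'' dimensional terms $\tfrac{n+a-1}{r}$ cancel against the $-(n+2), -(n+3)$ powers, leaving a perfect square.

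The remaining terms are error terms involving $f$: namely $-\tfrac 2r r^{-(n+2)}\int_{\B_r}\langle\nabla v,X\rangle f|y|^a$, $-r^{-(n+2)}\int_{\p\B_r}vf|y|^a$, $-\tfrac{n+a-1}{r}r^{-(n+2)}\int_{\B_r}vf|y|^a$, and the $f$-term from Lemma~\ref{L:ID} multiplied by the appropriate coefficient. The strategy for these is to bound them, using the growth estimate $\|v\|_{L^\infty(\B_r)} \le Cr^{\kappa_0}$ and $\|\nabla v\|_{L^\infty(\B_r)} \le Cr^{\kappa_0 - 1}$ (valid near a regular point by the Schauder estimate Theorem~\ref{schauder} and the homogeneity analysis; more precisely this is where the hypothesis $\Phi(v,0^+)\ge n+a+2\kappa_0$ enters, together with Theorem~\ref{T:ellalmgren} which gives that $N(v,r)$ stays bounded and hence $H(r)\lesssim r^{n+a+2\kappa_0}$ so that $v$ has $\kappa_0$-growth), combined with $\|f\|_{L^\infty}\le M$. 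This should yield each error term is $O(r^{(1+a)/2 - 1})$, i.e. bounded by $C_1 r^{(1+a)/2-1} = \frac{d}{dr}(C r^{(1+a)/2})$ for a suitable constant $C$ (the exponent $\tfrac{1+a}{2}$ being the natural gap between $\kappa_0$-type growth of $v^2$ and the $f$-perturbation). Adding $Cr^{(1+a)/2}$ to $\mathscr W$ thus absorbs the errors and yields \eqref{W'}.

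The monotonicity and existence of $\mathscr W(v,0^+)$ then follow immediately: since $a\in(-1,1)$, we have $(1+a)/2 > 0$, so $Cr^{(1+a)/2}\to 0$ as $r\to 0^+$, and \eqref{W'} shows $r\mapsto \mathscr W(v,r) + Cr^{(1+a)/2}$ is nondecreasing on $(0,r_0)$, hence has a limit as $r\to 0^+$; subtracting the vanishing term gives the limit of $\mathscr W(v,r)$ itself.

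I expect the main obstacle to be twofold. First, the bookkeeping in the differentiation: one must be careful to track every term from \eqref{fve2} and Lemma~\ref{L:ID} and verify that, with $\kappa = \kappa_0 = \tfrac{3-a}{2}$ specifically, the non-$f$ error terms assemble exactly into the square on the right-hand side of \eqref{W'} with no leftover. Second, and more substantively, justifying the $\kappa_0$-growth bounds $|v(X)|\le C|X|^{\kappa_0}$ and $|\nabla v(X)|\le C|X|^{\kappa_0-1}$ that are needed to estimate the $f$-error terms — this requires invoking the truncated Almgren monotonicity formula (Theorem~\ref{T:ellalmgren}) under the standing hypothesis $\Phi(v,0^+)\ge n+a+2\kappa_0$ to control $H(v,r)$ from above by $r^{n+a+2\kappa_0}$ on a punctured neighborhood, then passing from the $L^2$-on-spheres control to pointwise control via the Caccioppoli inequality (Lemma~\ref{L:caccio}) and the Schauder estimate (Theorem~\ref{schauder}) applied to rescalings. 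One should also double check that the $f$-bound actually used is just $\|f\|_{L^\infty}\le M$ (as the theorem statement asserts the constants depend on $\|f\|_{L^\infty(\B_1)}$), rather than the finer decay \eqref{fbound}, so the estimates of the error integrals should only use $|f|\le M$ pointwise.
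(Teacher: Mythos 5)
Your overall architecture matches the paper's proof: differentiate $\mathscr W$ using Lemmas~\ref{L:H'} and \ref{L:fve}, isolate the perfect square $\frac{2}{r^{n+2}}\int_{\p\B_r}\bigl(\langle\nabla v,\nu\rangle-\frac{\kappa_0}{r}v\bigr)^2|y|^a$ (the cancellations hinge precisely on $\kappa_0=\frac{3-a}{2}$, which gives $n+a+2\kappa_0=n+3$), and then absorb the three $f$-error integrals into $\frac{d}{dr}\bigl(Cr^{\frac{1+a}{2}}\bigr)$ by showing each is $O(r^{\frac{a-1}{2}})$. The use of the hypothesis $\Phi(v,0^+)\ge n+a+2\kappa_0$ to obtain $\kappa_0$-order decay also matches.

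The gap is in how you bound the error term $\frac{2}{r^{n+3}}\int_{\B_r}\langle\nabla v,X\rangle f\,|y|^a$. You propose the pointwise estimate $\|\nabla v\|_{L^\infty(\B_r)}\le Cr^{\kappa_0-1}$, but this is \emph{false} for $a>0$: near the contact set $\{x_n<0,y=0\}$ the model solution $\hat v_0$ in \eqref{hatv0} satisfies $\hat v_0\sim c\,|x_n|^{\kappa_0-(1-a)}|y|^{1-a}$, so $\p_y\hat v_0\sim c'\,|y|^{-a}$ blows up as $y\to 0$ when $a>0$. What is bounded is the \emph{weighted} derivative $|y|^a\p_y v$, not $\p_y v$; correspondingly only the weighted Dirichlet energy $D(r)=\int_{\B_r}|\nabla v|^2|y|^a$ has the expected decay. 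The paper's proof therefore never invokes a pointwise gradient bound: it instead applies Cauchy--Schwarz in $L^2(|y|^a\,dX)$, giving $\bigl|\int_{\B_r}\langle\nabla v,X\rangle f|y|^a\bigr|\lesssim \|f\|_{L^\infty}\,r\,|\B_r|^{1/2}_{|y|^a}\,D(r)^{1/2}$, and then controls $D(r)$ via the Caccioppoli inequality \eqref{caccio}, $D(r)\lesssim r^{-2}G(2r)+\|f\|_\infty G(2r)\lesssim r^{n+a+2\kappa_0-1}$. Your proposal should be amended at this step: replace the $L^\infty$ gradient bound with this $L^2$/Caccioppoli route (which is already the route you correctly identify for passing from the frequency hypothesis to $H(r)\lesssim r^{n+a+2\kappa_0}$). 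With that repair the rest of your argument goes through, including the $L^\infty$ bound on $v$ itself (which is fine, since $v$ is $C^{0,\kappa_0}$ near the free boundary) and the observation that only $\|f\|_{L^\infty}$ is used.
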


\begin{proof}
In what follows we will write for brevity $\mathscr W(r)$ instead of $\mathscr W(v,r)$. Using \eqref{H'} in Lemma~\ref{L:H'} and \eqref{fve1}--\eqref{fve2} in Lemma~\ref{L:fve}, and recalling that $2\kappa_0 = 3-a$, we obtain
\begin{align*}
\Wa'(r) & = \frac{1}{r^{n+2}} I'(r) - \frac{n+2}{r^{n+3}} I(r) - \frac{\kappa_0}{r^{n+3}} H'(r) + \frac{\kappa_0(n+3)}{r^{n+4}} H(r)
\\
& = \frac{1}{r^{n+2}} \bigg\{\frac{n+a-1}{r} I(r) + 2 \int_{\p \BB} \langle\nabla v,\nu\rangle^2  |y|^a - \frac 2r \int_{\BB} \langle\nabla v,X\rangle f |y|^a
\\
&\qquad - \int_{\p \BB} vf |y|^a - \frac{n+a-1}{r} \int_{\BB} vf |y|^a \bigg\} - \frac{n+2}{r^{n+3}} I(r)
\\
&\qquad  - \frac{\kappa_0}{r^{n+3}} \bigg\{ \frac{n+a}{r} H(r) + 2 I(r)\bigg\} + \frac{\kappa_0(n+3)}{r^{n+4}} H(r)
\\
& = \left(\frac{n+a-1}{r} - \frac{n+2}{r^{n+3}} - \frac{2\kappa_0}{r^{n+3}}\right) I(r) + \frac{\kappa_0(n+3) - \kappa_0(n+a)}{r^{n+4}} H(r)
\\
&\qquad + \frac{2}{r^{n+2}} \int_{\p \BB} \langle\nabla v,\nu\rangle^2  |y|^a - \frac 2{r^{n+3}} \int_{\BB} \langle\nabla v,X\rangle f |y|^a
\\
&\qquad - \frac{1}{r^{n+2}} \int_{\p \BB} vf |y|^a - \frac{n+a-1}{r^{n+3}} \int_{\BB} vf |y|^a
\\
& = - \frac{2\kappa_0}{r^{n+3}} I(r) + \frac{2 \kappa_0^2}{r^{n+4}} H(r) + \frac{2}{r^{n+2}} \int_{\p \BB} \langle\nabla v,\nu\rangle^2  |y|^a
\\
&\qquad - \frac 2{r^{n+3}} \int_{\BB} \langle\nabla v,X\rangle f |y|^a - \frac{1}{r^{n+2}} \int_{\p \BB} vf |y|^a - \frac{n+a-1}{r^{n+3}} \int_{\BB} vf |y|^a
\\
& = \frac{2}{r^{n+2}} \int_{\p \BB}\left(\langle\nabla v,\nu\rangle - \frac{\kappa_0}{r} v\right)^2 |y|^a - \frac 2{r^{n+3}} \int_{\BB} \langle\nabla v,X\rangle f |y|^a
\\
&\qquad - \frac{1}{r^{n+2}} \int_{\p \BB} vf |y|^a - \frac{n+a-1}{r^{n+3}} \int_{\BB} vf |y|^a.
\end{align*}
At this point we observe that the hypothesis $\Phi(v,0^+) \ge n + a + 2 \kappa_0$ implies  the existence of $r_0>0$ and $\bar C>0$ such that for $r\in (0,r_0)$ we have
\[
H(r) \le \bar C r^{n + a + 2 \kappa_0},\quad G(r) \le \bar C r^{n+a+2\kappa_0 +1}.
\]
This gives
\begin{equation}\label{qui}
\left|\frac{1}{r^{n+2}} \int_{\p \BB} vf |y|^a\right| \le C \|f\|_{L^\infty(\B_1)} r^{\frac{n+a}{2} - n -2} H(r)^{1/2} \le  C \|f\|_{L^\infty(\B_1)} r^{\frac{a-1}{2}}.  
\end{equation}
Similarly, we have
\begin{equation}\label{quo}
 \left|\frac{n+a-1}{r^{n+3}} \int_{\BB} vf |y|^a\right| \le C \|f\|_{L^\infty(\B_1)} r^{\frac{n+a+1}{2} - n -3} G(r)^{1/2} \le C \|f\|_{L^\infty(\B_1)} r^{\frac{a-1}{2}}.
\end{equation}
Finally, we find
\[
\left|\frac 2{r^{n+3}} \int_{\BB} \langle\nabla v,X\rangle f |y|^a\right| \le C \|f\|_{L^\infty(\B_1)} r^{\frac{n+a+1}{2} - n -2} D(r)^{1/2}.
\]
To estimate $D(r)$ we now use \eqref{caccio} in Lemma~\ref{L:caccio}, obtaining
\begin{align*}
D(r) & \le \left(\frac{C}{r^2} + \|f\|_{L^\infty(\B_1)}\right) G(2r) \le C' (1+ \|f\|_{L^\infty(\B_1)}) r^{n+a+2\kappa_0 -1}.
\end{align*}
Substituting this estimate in the above one we find
\begin{equation}\label{qua}
\left|\frac 2{r^{n+3}} \int_{\BB} \langle\nabla v,X\rangle f |y|^a\right| \le C''  r^{\frac{n+a+1}{2} - n -2 + \frac{n+a+2\kappa_0 -1}2} = C'' r^{\frac{a-1}2},
\end{equation}
where $C''>0$ depends on $n, a, \|f\|_{L^\infty(\B_1)}$. Combining \eqref{qui}, \eqref{quo} and \eqref{qua}, we obtain for $r\in (0,r_0)$
\[
\frac{d}{dr} \left(\mathscr W(r) + C r^{\frac{1+a}2}\right) \ge \frac{2}{r^{n+2}}
\int_{\p \B_r}\left(\langle\nabla v,\nu\rangle -
\frac{\kappa_0}{r} v\right)^2 |y|^a.
\]
This is the desired conclusion \eqref{W'}. 
\end{proof}

In the statement of Theorem~\ref{T:epi} below instead of $\mathscr W(v,r)$ we will use the following modified functional
\begin{equation}\label{weissk00}
\mathscr W_{0}(v,r) = \frac{1}{r^{n+2}}\int_{\BB} |\nabla v|^2 |y|^{a} -\frac{\kappa_0}{r^{n+3}}\int_{\partial \BB} v^2 |y|^{a} = \frac{1}{r^{n+2}} D(v,r) - \frac{\kappa_0}{r^{n+3}} H(v,r).
\end{equation}
When $r=1$ we will write $\mathscr W_0(v)$, instead of $\mathscr W_0(v,1)$
We will also need the prototypical function $\hat v_0$ in \eqref{hatv0} with $c>0$.
Since, as we have noted, such $\hat{v}_0$ is a global solution of the problem \eqref{epb222} with  $f\equiv 0$, from \eqref{ID} in Lemma~\ref{L:ID} we have $I(\hat v_0,r) = D(\hat v_0,r)$. Therefore, 
$\mathscr W(\hat v_0,r) = \mathscr W_0(\hat v_0,r)$.  Furthermore, $\hat v_0$ is homogeneous of degree $\kappa_0$, i.e., $\hat v_0(\la X) = \la^{\kappa_0} \hat v_0(X)$. Therefore, from what we have noted above we have in particular 
\[
\mathscr W_0(\hat v_0) = \mathscr W(\hat v_0) = 0.
\]
We mention that the geometric meaning of the functional $\mathscr W_0$ in \eqref{weissk00} is
that it measures the closeness of the 
solution $v$ to the prototypical homogeneous solutions of degree
$\kappa_0$, i.e., the function $\hat v_0$ in \eqref{hatv0}. The following result is  \cite[Theorem~4.2]{GPPS}.

\begin{thrm}[Epiperimetric inequality]\label{T:epi}  There exists
  $\kappa\in (0,1)$ and $\theta \in (0,1)$ such that if $w\in W^{1,2}(\B_1,|y|^a dX)$ is a homogeneous function of degree $\kappa_0 = \frac{3-a}{2}$ such
  that $w\ge 0$ on $B_1$ and $\|w-\hat{v}_0\|_{W^{1,2}(\B_1,|y|^a dX)}\le \theta$, then
  there exists $\tilde{w}\in W^{1,2}(\B_1,|y|^a dX)$ such that $\tilde{w}=w$ on $\partial \B_1$,
  $\tilde{w}$ is nonnegative on $B_1$, and 
\[
\mathscr W_0(\tilde{w})\le (1-\kappa)\mathscr W_0(w).
\]
\end{thrm}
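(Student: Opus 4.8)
The plan is to follow the variational/compactness scheme of Weiss, in the form adapted to the Signorini problem. The first step is to pass to the spherical cross--section. Writing a $\kappa_0$--homogeneous $w$ as $w(r\theta)=r^{\kappa_0}c(\theta)$ with $c=w|_{\p\B_1}$, a direct computation in polar coordinates (using $|y|^a\,dX=r^{n+a}|\theta_{n+1}|^a\,dr\,d\mathscr H^n(\theta)$ and $|\nabla w|^2=r^{2\kappa_0-2}(\kappa_0^2 c^2+|\nabla_\theta c|^2)$, together with $n+a-1+2\kappa_0=n+2$) yields
\[
\mathscr W_0(w)=\frac{1}{n+2}\int_{\p\B_1}\bigl(|\nabla_\theta c|^2-\mu_0\,c^2\bigr)\,|y|^a,\qquad \mu_0=\kappa_0(\kappa_0+n+a-1).
\]
Thus on $\kappa_0$--homogeneous functions $\mathscr W_0$ is, up to a positive constant, the spherical Rayleigh gap at the level $\mu_0$; since $\mu_0$ is the least admissible eigenvalue of the \emph{constrained} spherical problem --- the spectral form of the statement that $\kappa_0$ is the lowest possible homogeneity, cf.\ Lemma~\ref{L:gap} --- and $\hat v_0$ realizes it, one has $\mathscr W_0(w)\ge 0$ on the admissible cone $\{c\ge 0\text{ on }\p\B_1\cap\{y=0\}\}$ and $\mathscr W_0(\hat v_0)=0$.

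The core of the matter is a quantitative form of this minimality. I would first record the sign datum $\p_y^a\hat v_0\le 0$ on the contact set $\Lambda(\hat v_0)$, which --- combined with Euler's identity $\langle\nabla\hat v_0,\nu\rangle=\kappa_0\hat v_0$ on $\p\B_1$ and the distributional equation $L_a\hat v_0=2\p_y^a\hat v_0\,\mathscr H^n\lfloor_{\Lambda(\hat v_0)}$ --- makes the symmetric bilinear form $B$ associated to $\mathscr W_0$ (so $\mathscr W_0(v)=B(v,v)$) satisfy $B(\hat v_0,z)\ge 0$ for every admissible perturbation $z$ (i.e.\ $z\ge 0$ on $\Lambda(\hat v_0)\cap\{y=0\}$). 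I would then invoke the spectral gap transverse to the rotation family: if $\mathcal M$ denotes the $(n-1)$--parameter manifold of rotations of $\hat v_0$ about its axis in the thin space $\R^n$, then there is $c_0>0$ with $\mathscr W_0(z)\ge c_0\|z\|_{W^{1,2}(\B_1,|y|^a dX)}^2$ for every $\kappa_0$--homogeneous $z$ that vanishes on $\Lambda(\hat v_0)\cap\{y=0\}$ and is orthogonal to $T_{\hat v_0}\mathcal M$; equivalently, in the linearized Signorini operator at $\hat v_0$ the homogeneity $\kappa_0$ is realized only along $T_{\hat v_0}\mathcal M=\ker$. Both facts come from the explicit ODE analysis of $\hat v_0$ and of the one--dimensional spectral problem on the sphere carried out in \cite{GPPS}.

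With these ingredients the competitor is built essentially on the linearized level. Given an admissible $\kappa_0$--homogeneous $w$ close to $\mathcal M$, rotate coordinates so that $\hat v_0$ is the nearest element of $\mathcal M$; first--order optimality of the nearest--point projection then forces $w-\hat v_0\perp T_{\hat v_0}\mathcal M$. Let $\tilde w$ be the energy--minimizing extension of $c=w|_{\p\B_1}$, i.e.\ the solution of the thin obstacle problem in $\B_1$ with boundary datum $c$; since $H(\cdot,1)$ depends only on the trace on $\p\B_1$ while $\tilde w$ has least Dirichlet energy among admissible competitors, $\mathscr W_0(\tilde w)\le\mathscr W_0(w)$, and from the variational inequality for $\tilde w$ (tested against $w$) one gets $\mathscr W_0(w)-\mathscr W_0(\tilde w)\ge\int_{\B_1}|\nabla(w-\tilde w)|^2|y|^a$. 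The claim is that in fact $\mathscr W_0(w)-\mathscr W_0(\tilde w)\ge\kappa\,\mathscr W_0(w)$ for a fixed $\kappa\in(0,1)$: decomposing the perturbation into spherical modes, the modes at level $\mu_0$ are annihilated by the orthogonality to $\mathcal M$ (using the rigidity above), while on each transverse mode the optimal radial profile undercuts the homogeneous one by a definite fraction controlled by $c_0$. The passage from this linearized heuristic to an honest estimate --- in particular the continuity of $w\mapsto\tilde w$ through the obstacle nonlinearity, and the automatic nonnegativity $\tilde w\ge 0$ on $B_1$ (by the minimum principle, $\tilde w$ being $L_a$--superharmonic with nonnegative boundary data) --- is handled by a standard compactness/contradiction argument: were the inequality to fail for a sequence $w_j\to\hat v_0$, one normalizes $w_j-\hat v_0$ by its norm, extracts a nontrivial $\kappa_0$--homogeneous limit orthogonal to $T_{\hat v_0}\mathcal M$ that solves the linearized problem and annihilates the cross term, and contradicts the spectral gap.

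The step I expect to be the main obstacle is the one underlying the second paragraph: establishing the spectral gap for the \emph{constrained} quadratic form --- that $\kappa_0$ is \emph{strictly} isolated among admissible homogeneities transverse to the rotation family, with $T_{\hat v_0}\mathcal M$ exactly the kernel --- and, hand in hand with it, arranging that the competitor stays nonnegative on $\{y=0\}$ (the alternative to the energy--minimizing $\tilde w$ being an explicit competitor obtained by enlarging the radial exponents on the off--eigenspace spherical components of $w-\hat v_0$, for which nonnegativity on the thin set must be checked directly). This is exactly the point at which the fine geometry of $\hat v_0$, the shape of its contact set, and the one--sided datum $\p_y^a\hat v_0$ enter decisively.
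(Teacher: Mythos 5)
The paper offers no proof here: Theorem~\ref{T:epi} is quoted verbatim as \cite[Theorem~4.2]{GPPS}, so your attempt is being measured against that reference rather than against anything in this paper. Your outline --- reduce to the spherical Rayleigh form, exploit the constrained minimality and the rigidity of $\hat v_0$ modulo its symmetry orbit, and close by a compactness/contradiction argument against a competitor --- is indeed the strategy of the cited proof (in the tradition of \cite{GPG} for the Signorini case). The explicit computations you record are correct: with $n+a-1+2\kappa_0 = n+2$ one has $\mathscr W_0(w) = \tfrac{1}{n+2}\int_{\p\B_1}\left(|\nabla_\theta c|^2-\mu_0 c^2\right)|y|^a$ with $\mu_0 = \kappa_0(n+2-\kappa_0)=\kappa_0(\kappa_0+n+a-1)$, and the polarization identity $B(\hat v_0,z)=-2\int_{\Lambda(\hat v_0)} z\,\p_y^a\hat v_0 \ge 0$ for $z\ge 0$ on $\Lambda(\hat v_0)$ follows cleanly from Euler's identity and $L_a\hat v_0 = 2\,\p_y^a\hat v_0\,\mathscr H^n\lfloor_{\Lambda(\hat v_0)}$.

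Two points in the spectral-gap paragraph need tightening, and together they constitute the ``main obstacle'' you flag. First, $\hat v_0$ itself is $\kappa_0$-homogeneous, vanishes on $\Lambda(\hat v_0)$, and is orthogonal to the $(n-1)$-dimensional $T_{\hat v_0}\mathcal M$ you define (the rotation directions $\partial_{e_i}\hat v_0$, $i<n$, are odd in $x_i$ while $\hat v_0$ is even), yet $\mathscr W_0(\hat v_0)=0$ --- so the coercivity $\mathscr W_0(z)\ge c_0\|z\|^2$ fails as stated. You must quotient by the full $n$-dimensional tangent space $\R\hat v_0\oplus T_{\hat v_0}\mathcal M$ (scaling and rotations), which is what ``modes at level $\mu_0$'' should mean. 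Second, the normalized differences $z_j=(w_j-\hat v_0)/\|w_j-\hat v_0\|$ satisfy only the cone constraint $z_j\ge -\hat v_0/\|w_j-\hat v_0\|$, hence in the limit $z\ge 0$ on $\Lambda(\hat v_0)$; they do \emph{not} a priori vanish there. The gap must therefore be established on this convex cone rather than on the linear subspace $\{z=0\ \text{on}\ \Lambda(\hat v_0)\}$, and the device for absorbing the positive part of $z$ on $\Lambda(\hat v_0)$ is precisely the favorable sign $B(\hat v_0,z)\ge 0$ in the expansion $\mathscr W_0(\hat v_0+\delta z)=2\delta B(\hat v_0,z)+\delta^2\mathscr W_0(z)$, coupled with a nondegeneracy statement ensuring the limit is nontrivial. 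Finally, your bound $\mathscr W_0(w)-\mathscr W_0(\tilde w)\ge\int_{\B_1}|\nabla(w-\tilde w)|^2|y|^a$ for the energy-minimizing competitor is correct but is not by itself a fraction of $\mathscr W_0(w)$; bridging that is the content of the theorem and rests on the constrained spectral analysis above.
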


\subsection{Regularity of the   free boundary near regular points}

Let $U$ be the solution of the thin obstacle problem \eqref{e0}. In this subsection we analyze the free boundary of $U$ near a regular point $(x_0, t_0)$. By translation, we may assume  without loss of generality that $(x_0, t_0)=(0,0)$. Since the set of regular points is a relatively open subset of the free boundary, there exists $r_0>0$ such that $\Gamma(U) \cap Q_{r_0}$ consists only of regular points.  We denote this set by $\Gamma_{\kappa_0}$, where $\kappa_0$ is as in \eqref{kappazero}. Now for every $(x,t) \in \Gamma_{\kappa_0}$, we note that $U(\cdot, t)$ solves the elliptic thin obstacle problem \eqref{epb222} with right hand side $f= F+ U_t$ which is uniformly bounded independent of $t$. For a fixed time level $t$, we denote by $\tilde U_{r,x} (\cdot,t)$ the space-like Almgren rescaling of $U(\cdot,t)$ centered at $x$, see \eqref{ar}. We also consider the space-like \emph{homogeneous rescalings}  centered at a point $(x_0, t_0) \in \Gamma_{\kappa_0}$.
\[
U^\star_{r, x_0}(X,t_0)= \frac{U(x_0 + rX, t_0)}{r^{\frac{3-a}{2}}}.
\]
From  the growth estimate \eqref{g1} and the uniform Schauder estimates in Theorem~\ref{schauder}, it follows that the functions $U_{r, x_0}^\star(\cdot,t_0)$ are uniformly bounded in $C_{a,{\rm loc}}^{1+\gamma}$, for some $\gamma>0$ independent of $(x_0,t_0)$.  Similarly, Theorem~\ref{conv} implies that $\tilde U_{r, x}$ are uniformly  bounded in $C_{a, loc}^{1+\gamma}$ and converge to  some $v_0$ on a subsequence $r_j \to 0$. Moreover, after a rotation of coordinates, we have that such a $v_0$  has the form \eqref{hatv0}.   

Recalling that $0<\frac{1-a}{2}<1$, let $1>\delta>\frac{1-a}{2}$. As in \eqref{generalized-frequency}, consider the functional $\Phi_{\delta,x} (U(\cdot,t),r)$ corresponding to the free boundary point $x \in \Gamma(U(\cdot,t))$. Using the regularity estimates in Theorem~\ref{localized} and Theorem~\ref{T:ellalmgren}, we can  argue as in the proof of \cite[Lemma~2.5]{GPPS} to conclude that as $r \to 0^+$,
\begin{equation}\label{uni1}
\Phi_{\delta,x} (U(\cdot,t),r) \to n+ a+ 2\kappa_0,
\end{equation}
uniformly in $(x, t) \in \Gamma_{\frac{3-a}2} \cap Q_{r_1}$, 
provided $r_1$ is small enough. As in \cite{GPPS}, we denote by $\mathcal{H}_{\frac{3-a}{2}}$, the space of $\frac{3-a}{2}$ homogeneous functions  of the form
\[
  c\left(\langle x, e\rangle+\sqrt{ \langle x, e\rangle ^2+y^2}\right)^{\frac{1-a}{2}}\left( \langle x, e\rangle -\frac{1-a}{2}\sqrt{\langle x, e\rangle^2+y^2}\right),
 \]
 for some $c>0$ and $|e|=1$. From a compactness argument as in the proof of \cite[Lemma~3.4]{GPG}, the uniform Schauder estimates in Theorem~\ref{schauder}, and from \eqref{uni1}, it follows that given
$\theta>0$, there exists $r_1>0$ such that
\begin{equation}\label{uni2}
\inf_{h \in \mathcal{H}_{\frac{3-a}{2}}} \| \tilde U_{r,x} (\cdot, t) - h\|_{C_a^{1+\gamma}(\overline{\B_{1}^+})} < \theta
\end{equation}
for all $r< r_1 $ and $(x,t) \in Q_{r_1} \cap \Gamma_{\kappa_0}$.  This shows that $\tilde U_{r,x}(\cdot,t)$ satisfies the hypothesis of  Theorem~\ref{T:epi}. Since the homogeneous rescalings $U_{r, x}^\star(\cdot,t)$ are constant multiples of the Almgren rescalings, the same also holds for $U_{r,x}^\star(\cdot, t)$. 

Now using  the Weiss type monotonicity result  in Theorem~\ref{T:weiss}, the scaling properties of the Weiss functional,  the boundedness of $U_t$,  Theorem~\ref{T:epi},  the growth estimate in \eqref{g1} and the uniform Schauder estimate in Theorem~\ref{schauder}, we can argue as in the proofs of  Lemma~3.7, Lemma~3.8,  Lemma~5.1, Proposition~5.2 and 5.3 in \cite{GPPS}  and consequently assert that  
\begin{equation}\label{con4}
\begin{cases}
U_{r,x}^\star(\cdot, t) \to U_{(x, t), 0}\text{ in $C_a^{1+\gamma}(\overline{\B_{1}^+})$, where $U_{(x,t),0} \in \mathcal{H}_{\frac{3-a}{2}}$,}
\\
\text{$U_{(x,t),0}$ is nonzero,}
\\
\displaystyle\int_{\partial \B_1} |U_{r,x}^\star(\cdot, t) - U_{(x,t), 0} | |y|^a \leq Cr^{\beta}\ \text{for some $\beta>0$},
\end{cases}
\end{equation}
for  all $r < r_1$ and $ (x,t) \in Q_{r_2} \cap \Gamma_{\kappa_0}$, where $r_1$ and $r_2$ are sufficiently small and where $C$ is some universal constant. 
We now show that for some $\gamma>0$, we have that
\begin{equation}\label{close}
\int_{\partial B_1} |U_{(x_1, t_1), 0} - U_{(x_2,t_2), 0} |  \leq C(|x_1 - x_2| + |t_1 - t_2|^{1/2})^{\gamma}
\end{equation}
Since $U_{(x_i, t_i), 0}$ (for $i=1,2$) are $\frac{3-a}{2}$ homogeneous functions,  it suffices to show that
\begin{equation}\label{close1}
\int_{ B_1} |U_{(x_1, t_1), 0} - U_{(x_2,t_2), 0} |  \leq C(|x_1 - x_2| + |t_1 - t_2|^{1/2})^{\gamma}.
\end{equation}
From the last estimate  in \eqref{con4}  above we obtain 
\begin{equation}\label{close2}
\int_{\partial \B_1} |U_{(x_1, t_1), 0} - U_{(x_2,t_2), 0} | |y|^a \leq Cr^{\beta} + \int_{\partial \B_1} |U_{r,x_1}^{\star}(\cdot, t_1) - U_{r,x_2}^{\star}(\cdot, t_2) | |y|^a.
\end{equation}
Applying the mean value theorem we infer
\begin{equation}\label{mv}
\begin{aligned}
&|U_{r,x_1}^{\star}(\cdot, t_1) - U_{r,x_2}^{\star}(\cdot, t_2) | \\
&\qquad= \frac{1}{r^{\frac{3-a}{2}}} \int_{0}^1  \nabla_{x} U( bx_1 + (1-b)x_2 + rx, ry, bt_1 + (1-b) t_2). (x_1 - x_2) db 
\\
&\qquad\qquad+ \frac{1}{r^{\frac{3-a}{2}}} \int_{0}^1  \partial_t U( bx_1 + (1-b)x_2 + rx, ry, bt_1 + (1-b) t_2). (t_1 - t_2) db.
\end{aligned}
\end{equation}
The first estimate in \eqref{k2}  centered at the free boundary point $(x_1, t_1)$ gives, for $0<b<1$,
\begin{equation}\label{grad}
\begin{aligned}
&|\nabla_{x} U( bx_1 + (1-b)x_2 + rx, ry, bt_1 + (1-b) t_2)|
\\
&\qquad=|\nabla_{x} U( x_1 + (1-b) (x_2-x_1) + rx, ry, t_1 + (1-b)( t_2- t_1))|
\\
&\qquad\leq C \left( |x_1 - x_2| + |t_1-t_2|^{1/2} +r ( |x| + |y| ) \right) ^{\frac{1-a}{2}}.
\end{aligned}
\end{equation}

 Using the boundedness of  $U_t$, we obtain from \eqref{mv} and \eqref{grad}  
\begin{equation}\label{interm}
\begin{multlined}
\int_{\partial \B_1} |U_{r,x_1}^{\star}(\cdot, t_1) - U_{r,x_2}^{\star}(\cdot, t_2) | |y|^a \\\leq C\biggl(  \biggl(\frac{(|x_1 - x_2| + |t_1- t_2|^{1/2})}{r}\biggr)^{\frac{3-a}{2}} + \frac{|x_1 - x_2|}{r}  + \frac{|t_1- t_2|}{r^{\frac{3-a}{2}}} \biggr).
\end{multlined}
\end{equation}
We now let $r= ( |x_1 - x_2| + |t_1 - t_2|^{1/2})^{\sigma}$ for some $\sigma \in (0,1)$ arbitrarily fixed. With this choice of $r$, combining \eqref{close2} and \eqref{interm}, we obtain  
\begin{equation}\label{close4}
\int_{\partial \B_1} |U_{(x_1, t_1), 0} - U_{(x_2, t_2), 0} | |y|^a \leq  C( |x_1 - x_2| + |t_1 - t_2|^{1/2})^{2\gamma},
\end{equation}
for some $\gamma$ depending also on $\sigma$,
Given \eqref{close4}, we can proceed as in the proof of Proposition~5.4 in \cite{GPPS} and thus we can  finally assert that \eqref{close} holds.  From now on, we will denote the blow up limit 
\begin{equation}\label{blowup1}
U_{(x,t), 0}= c_{(x,t)}\left(\langle x, e_{(x,t)}\rangle+\sqrt{ \langle x, e_{(x,t)}\rangle ^2+y^2}\right)^{\frac{1-a}{2}}\left(\langle x, e_{(x,t)}\rangle -\frac{1-a}{2}\sqrt{\langle x, e_{(x,t)}\rangle ^2+y^2}\right). 
 \end{equation}
Arguing as in  the proof of Lemma~5.6 in \cite{GPPS}, one can deduce from \eqref{close}  that  the following inequalities hold:
 \begin{equation}\label{close5}
 \begin{aligned}
 |c_{(x_1, t_1)} - c_{(x_2, t_2)}| &\leq C( |x_1 - x_2| + |t_1 - t_2|^{1/2})^{\gamma},
 \\
 |e_{(x_1, t_1)} - e_{(x_2, t_2)}| &\leq C( |x_1 - x_2| + |t_1 - t_2|^{1/2})^{\gamma}. 
 \end{aligned}
 \end{equation}
 With these estimates in hand, we now proceed with the proof of regularity of the free boundary $\Gamma(U)$ near $(0,0)$.
 
 \medskip
 
 \emph{Step 1:}  We first note that the boundedness of $U_t$ and the  uniform Schauder estimates in Theorem~\ref{schauder} imply that  $\|U_{r,x}^{\star}(\cdot, t)\|_{C_a^{1+\alpha}(\overline{\B_1^+})}$ are uniformly bounded independent of $(x,t) \in \Gamma_{\kappa_0} \cap Q_{r_2}$ for $r < r_1$ and some $\alpha>0$. Then, using the third estimate in \eqref{con4}, estimates in \eqref{close5} and a compactness argument as in Step 1 in the proof of Theorem~1.2 in \cite{GPPS}, it is possible to show that given $\ve>0$, there exists $r_\ve >0$ such that for $r< r_\ve$, we have
 \begin{equation}\label{close7}
 \|U_{r,x}^{\star} (\cdot, t) - U_{(x,t), 0} \|_{C_a^{1+\alpha}(\overline{\B_1^+})} \leq  \ve
 \end{equation}
 for $(x,t) \in \Gamma_{\kappa_0} \cap Q_{r_2}$. 
 
 \medskip
 
 \emph{Step 2 (Conclusion):} Without loss of generality, we may assume that $e_{(0,0)}= e_n=(0,...0, 1)$. Given the weighted $C^{1}$ estimate  \eqref{close7},  we can now  repeat the arguments  as in Step 2-Step 4 in the proof of Theorem~1.2 in \cite{GPPS} to conclude that  for  a given $\ve$ small enough, there exists $ r_\ve, r_2$ small enough such that for $(x,t) \in \Gamma_{\kappa_0} \cap Q_{r_2}$, 
\begin{equation*}
\begin{aligned}
x+ ( \mathcal{C}_{\ve}( e_n)  \cap B_{r_\ve})&\subset  \{U(\cdot, t) >0\},
\\
x- ( \mathcal{C}_{\ve}( e_n)  \cap B_{r_\ve})&\subset  \{U(\cdot, t) =0\}.
\end{aligned}
\end{equation*}
Here,  for a unit vector $e$, $\mathcal{C}_{\ve}(e)$ is the cone in $\Rn$ defined by 
\begin{equation}\label{cone}
\mathcal{C}_{\ve}(e)= \{(x', x_n)\mid \langle x, e\rangle \geq \ve |x|\}
\end{equation}
Fixing $\ve=\ve_0$,  this then implies in a standard way  that  for every $t$ with $|t| \leq r_2^2$, there exists a Lipschitz function $g(\cdot, t): \R^{n-1} \to \R$, with Lipschitz norm bounded by $\frac{C}{\ve_0}$,   such that the free boundary $\Gamma_{\kappa_0} \cap Q_{r_2}$ can be represented as $\{x_n=g(x', t)\}$.  Moreover, we also have that  for all $(x_0,t_0) \in \Gamma_{\kappa_0} \cap Q_{r_2}$ and $r_2$ small,  
\begin{equation}\label{St}
\begin{aligned}
\{x_n \leq g(x',t)\}  \cap B_{r_2}(x_0) &= \{U(\cdot, t_0) =0\} \cap B_{r_2}(x_0),
\\
\{x_n > g(x', t)\}  \cap B_{r_2}(x_0) &= \{U(\cdot, t) >0\} \cap B_{r_2}(x_0).
\end{aligned}
\end{equation}
Moreover by letting $\ve \to 0$, we see that $\Gamma(U(\cdot, 0))$ is differentiable at $x=0$ with normal $e_n$. This in turn implies the space like differentiability of $g$ at $0$. Likewise $g(\cdot, t)$ is differentiable with respect to $x' \in \R^{n-1}$ at every $x'$ such that $(x', g(x', t), t) \in \Gamma_{\kappa_0} \cap Q_{r_2}$. It also follows that $\Gamma(U(\cdot, t))$ has a normal $e_{(x,t)}$ at $(x,t) \in \Gamma_{\kappa_0} \cap Q_{r_2}$.  Using the fact that $(x,t) \to e_{(x,t)}$ is in $H^{\gamma, \frac{\gamma}{2}}$ which follows from \eqref{close5}, we obtain that $\nabla_{x'}g$ is in $H^{\gamma, \frac{\gamma}{2}}$.  

\medskip

We now make the following claim. 

\emph{Claim:} For a possibly smaller $r_2$,  the following nondegeneracy estimate holds: 
\begin{equation}\label{nondeg}
|U(x,0,  t_0)| \geq c |x_n - g(x', t_0)|^{\frac{3-a}{2}}\ \text{whenever   $t_0 \leq r_2^2$ and $x_ n> g(x', t_0)$},
\end{equation}
for some $c$ universal independent of $t_0$. 

Before proving the claim, we show that such a nondegeneracy estimate  implies that $g$ is in fact H\"older continuous in $t$ with exponent $\frac{1+\alpha}{2}$ for some $\alpha>0$. This would then imply that $\Gamma_{\kappa_0} \cap Q_{r_2}$ is $H^{1+\alpha,\frac{1+\alpha}{2}}$-regular for a possibly different $\alpha>0$, depending also on $\gamma$ above. 

\medskip

Indeed from \eqref{nondeg} and the boundedness of $U_t$ ( say $|U_t| \leq M$) it follows 
\[
U(x,0,t) \geq c |(x_n- g(x',t_0)|^{\frac{3-a}{2}} - M |t-t_0|.
\]
Taking $x_n- g(x',t_0) = r$,  we have
\begin{equation}\label{pos}
U(x', g(x',t_0)+ r,0,  t) \geq c r^{\frac{3-a}{2}}- M |t-t_0| >0,
\end{equation}
provided 
\[
M |t -t_0| \leq c r^{\frac{3-a}{2}}.
\]
Now  \eqref{pos}  and   \eqref{St}  imply that 
\[
g(x',t)< x_n= g(x',t_0) + r
\]
Letting 
\[
c r^{\frac{3-a}{2}} = 2 M|t- t_0|,
\]
we obtain 
\[
g(x',t)< g(x',t_0)+ C |t -t_0|^{\frac{2}{3-a}}
\]
for a different $C$. Interchanging  $t$ and $t_0$, we thus conclude
\[
|g(x',t) - g(x',t_0)| < C |t -t_0|^{\frac{2}{3-a}}.
\]

Since $a \in (-1, 1)$, we have $\frac{2}{3-a} > 1/2$, and thus the $\frac{1+\alpha}{2}$-H\"older continuity of $g$ in $t$ follows.  This would then imply the $H^{1+\alpha, \frac{1+\alpha}{2}}$-regularity of the free boundary near the regular point $(0, 0)$.  In order to conclude,  we are  now only left  with proving  \eqref{nondeg}. 

\medskip

\emph{Proof of Claim:} For $(x^1, t^1) \in \Gamma_{\kappa_0} \cap Q_{r_2}$,  we can start from \eqref{blowup1}  and \eqref{close5} and proceed as in Step 2- Step 4 in the proof of Theorem~1.2 in \cite{GPPS}     to obtain that for a given $\ve_0>0$ fixed ,  and $r_2$ small enough depending also on $\ve_0$, 
\[
(\mathcal{C}_{\ve_0}(e_{n})  \cap B_{r_1} ) + x^1 \subset \{U(\cdot, t^1) >0 \}
\]
for $r_1$ small enough, whenever $(x^1,t^1) \in \Gamma_{\kappa_0} \cap Q_{r_2}$. Moreover on  $K_{\ve_0}(e_n)= \mathcal{C}_{\ve_0}(e_n) \cap \partial B_1$, we can also ensure 
\[
U_{(x^1,t^1), 0}  > a_0>0
\]
for some $a_0>0$ universal depending on $\ve_0$.  From the estimate \eqref{close7} (with $\ve= \frac{a_0}{2}$), we obtain that for $r \leq r_1$ for a possibly smaller $r_1$ depending also on $a_0$  that  the following holds, 
\begin{equation}\label{bl}
U_{r, x^1}^{\star}(\cdot, t^1) > a_0/2\ \text{on $K_{\ve_0}(e_n)$}
\end{equation}
for all $(x^1,t^1) \in \Gamma_{\kappa_0} \cap Q_{r_2}$.  Now given $(x^1, t^1)= (x', g(x', t^1), t^1) \in \Gamma_{\kappa_0}$,  let $(x,t^1)= (x', x_n, t^1)$ be such that $x_n > g(x', t^1)$. Then by letting $r= x_n - g(x', t^1)$, we obtain from \eqref{bl} that 
\begin{equation}\label{bl1}
U_{r,x^1}^{\star} (e_n,0,  t^1) = \frac{U(x', g(x', t^1) + r, 0,  t^1)}{r^{\frac{3-a}{2}}} =\frac{U(x', x_n, t^1)}{ |x_n - g(x', t^1)|^{\frac{3-a}{2}}} \geq  a_0/2
\end{equation}
from which \eqref{nondeg} follows. 
 We have thus proved the following.

\begin{thrm}\label{regofreg}
Let $U$ be a solution to \eqref{e0} where $F$ satisfies the bounds as in \eqref{fbound}--\eqref{ptfbound} for some $\ell \geq 4$.  Let $(x_0, t_0)\in\Gamma_{\kappa_0}(U)$. Then there exists a small $r>0$  such that $\Gamma(U) \cap Q_{r}(x_0, t_0)$ is a $H^{1+\alpha, \frac{1+\alpha}{2}}$ graph for some $\alpha>0$. 
\end{thrm}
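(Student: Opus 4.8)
The plan is to freeze the time variable. By Theorem~\ref{localized} the time derivative $\partial_t U$ is locally bounded, so for each fixed $t$ the function $U(\cdot,t)$ solves the elliptic thin obstacle problem \eqref{epb222} with right-hand side $f=F+U_t\in L^\infty$, uniformly in $t$. After translating the regular point to the origin, Lemma~\ref{open} furnishes a small $Q_{r_0}$ in which $\Gamma(U)$ consists only of regular points, which we call $\Gamma_{\kappa_0}$; by Lemma~\ref{equiv}, for each $(x,t)\in\Gamma_{\kappa_0}$ the point $x$ is a regular free boundary point of $U(\cdot,t)$ in the elliptic sense of Definition~\ref{D:reg}. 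The first step is to make this uniform: using Theorems~\ref{T:ellalmgren} and \ref{localized} and arguing as in \cite[Lemma~2.5]{GPPS} one obtains \eqref{uni1}, namely $\Phi_{\delta,x}(U(\cdot,t),r)\to n+a+2\kappa_0$ uniformly over $(x,t)$ in a small parabolic neighborhood; combined with the compactness of Almgren rescalings (Theorem~\ref{conv}) and a compactness argument as in \cite[Lemma~3.4]{GPG}, this gives \eqref{uni2}, i.e.\ the space-like rescalings $\tilde U_{r,x}(\cdot,t)$ fall into an arbitrarily small $C_a^{1+\gamma}$-neighborhood of the manifold $\mathcal H_{(3-a)/2}$ of rotated profiles \eqref{hatv0}, so that the hypotheses of the epiperimetric inequality Theorem~\ref{T:epi} hold uniformly.

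Next I would combine the Weiss-type monotonicity of Theorem~\ref{T:weiss}, the epiperimetric inequality, and the growth and Schauder estimates \eqref{g1} and Theorem~\ref{schauder}, and reproduce the arguments of Lemmas~3.7--3.8 and Propositions~5.2--5.3 of \cite{GPPS} to get \eqref{con4}: the homogeneous rescalings $U^\star_{r,x}(\cdot,t)$ converge in $C_a^{1+\gamma}(\overline{\B_1^+})$ to a nonzero profile $U_{(x,t),0}\in\mathcal H_{(3-a)/2}$, with a polynomial rate $r^\beta$ in $L^1(\partial\B_1,|y|^a\,d\sigma)$. To propagate this in $(x,t)$, expand $U^\star_{r,x_1}(\cdot,t_1)-U^\star_{r,x_2}(\cdot,t_2)$ by the mean value theorem, insert the gradient estimate \eqref{k2} at the free boundary point and the bound on $U_t$, and bound the difference of rescalings by $\big((|x_1-x_2|+|t_1-t_2|^{1/2})/r\big)^{(3-a)/2}+|x_1-x_2|/r+|t_1-t_2|/r^{(3-a)/2}$; choosing $r=(|x_1-x_2|+|t_1-t_2|^{1/2})^\sigma$ and using \eqref{con4} yields the parabolic Hölder bound \eqref{close} on $(x,t)\mapsto U_{(x,t),0}$, whence, as in Lemma~5.6 of \cite{GPPS}, the Hölder dependence \eqref{close5} of the amplitude $c_{(x,t)}$ and direction $e_{(x,t)}$ in \eqref{blowup1}. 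A further compactness argument as in Step~1 of the proof of \cite[Theorem~1.2]{GPPS} upgrades \eqref{con4} to the weighted $C_a^{1+\alpha}$-closeness \eqref{close7} of $U^\star_{r,x}(\cdot,t)$ to $U_{(x,t),0}$, again uniformly over $\Gamma_{\kappa_0}$ near the origin.

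From here the slice-by-slice geometry follows the elliptic template: $C_a^{1+\alpha}$-closeness to the explicit profile forces, for a fixed aperture $\varepsilon_0$, the cone inclusions $x+(\mathcal C_{\varepsilon_0}(e_n)\cap B_{r_\varepsilon})\subset\{U(\cdot,t)>0\}$ and $x-(\mathcal C_{\varepsilon_0}(e_n)\cap B_{r_\varepsilon})\subset\{U(\cdot,t)=0\}$ at every $(x,t)\in\Gamma_{\kappa_0}\cap Q_{r_2}$, so that $\Gamma_{\kappa_0}\cap Q_{r_2}=\{x_n=g(x',t)\}$ with $g(\cdot,t)$ Lipschitz, and \eqref{close5} gives $\nabla_{x'}g\in H^{\gamma,\gamma/2}$. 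The genuinely parabolic point, and the main obstacle, is the time regularity of $g$. For this I would prove the nondegeneracy estimate \eqref{nondeg}: from \eqref{blowup1} and the cone inclusion, $U_{(x^1,t^1),0}\ge a_0>0$ on $\mathcal C_{\varepsilon_0}(e_n)\cap\partial\B_1$, and \eqref{close7} transfers this to $U^\star_{r,x^1}(\cdot,t^1)\ge a_0/2$ there, which unwinds to $|U(x,0,t_0)|\ge c\,|x_n-g(x',t_0)|^{(3-a)/2}$. Finally, playing this against $\|U_t\|_{L^\infty}\le M$: $U(x',g(x',t_0)+r,0,t)\ge c\,r^{(3-a)/2}-M|t-t_0|$ stays positive as long as $M|t-t_0|\le c\,r^{(3-a)/2}$, so by \eqref{St} one gets $g(x',t)<g(x',t_0)+C|t-t_0|^{2/(3-a)}$; symmetrizing in $t\leftrightarrow t_0$ gives $|g(x',t)-g(x',t_0)|\le C|t-t_0|^{2/(3-a)}$. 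Since $a\in(-1,1)$, $\tfrac{2}{3-a}>\tfrac12$, so $g$ is $C^{(1+\alpha)/2}$ in $t$, and shrinking $\alpha$ so that $\alpha\le\gamma$ and $\tfrac{1+\alpha}{2}\le\tfrac{2}{3-a}$ yields $g\in H^{1+\alpha,(1+\alpha)/2}$, i.e.\ Theorem~\ref{regofreg}.
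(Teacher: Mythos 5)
Your proposal is correct and follows essentially the same route as the paper: frozen-time reduction via boundedness of $\partial_t U$, uniformization of the elliptic frequency, the epiperimetric inequality and Weiss monotonicity to extract a quantitative rate \eqref{con4}, the mean-value argument to obtain Hölder dependence of the blowup profile on $(x,t)$, cone inclusions yielding a Lipschitz graph, and finally the nondegeneracy estimate \eqref{nondeg} combined with the $L^\infty$ bound on $U_t$ to capture the $|t-t_0|^{2/(3-a)}$ time-regularity of $g$.
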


\appendix

\section{}\label{S:appB}

In this appendix we establish some auxiliary results on the regularity of even and odd solutions to the free equation
\[ 
\La U = |y|^a f
\]
We note that such estimates were  crucially used to establish Theorem~\ref{localized}. 

\subsection{Regularity of even solutions}
We first consider the case of  symmetric (even)  solutions to
\begin{equation}\label{nhom}
\La U= |y|^a f\quad\text{in }\Q_1.
\end{equation}
 When $f \equiv 0$,  we can assert that such solutions   are twice differentiable and hence  are classical solutions. 

\begin{lemma}\label{smooth}
Let $U$ be a solution to 
\begin{equation}\label{zr}
\La U= 0\quad\text{in }\Q_1,
\end{equation}
with $U(x,y,t)= U(x, -y, t)$. Then $U \in H^{2 + \beta, \frac{2+\beta}{2}} (\Q_r)$ for all $r <1 $ and some $\beta>0$. \end{lemma}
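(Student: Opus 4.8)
The plan is to combine a bootstrap in the tangential variables $(x,t)$, which does not feel the degeneracy of $\La$ on $\{y=0\}$, with a direct integration of the equation in the transversal variable $y$ to recover the normal derivatives up to the thin set.

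First I would record the consequences of the symmetry hypothesis. Since $U(x,-y,t)=U(x,y,t)$, the weighted conormal derivative $\p_y^a U$ vanishes on $\{y=0\}$, so the even reflection makes $U$ a global weak solution of $\La U=0$ in $\Q_1$; moreover on $\Q_1\setminus\{y=0\}$ the operator $\La$ is uniformly parabolic with smooth coefficients, hence $U\in C^\infty(\Q_1\setminus\{y=0\})$ by classical interior theory. Thus the only issue is regularity up to $\{y=0\}$. Next, for any multiindex $\alpha$ and any $j\ge0$ the function $\p_x^\alpha\p_t^j U$ is again an even weak solution of $\La(\cdot)=0$ in a slightly smaller cylinder, because the coefficients of $\La$ depend on $y$ only. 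Applying Proposition~\ref{ac1} with $f\equiv0$ (equivalently Theorem~\ref{schauder}) via a standard difference quotient argument and iterating on $\alpha$ and $j$, one obtains that $U$ is $C^\infty$ in $(x,t)$ locally, with $\p_x^\alpha\p_t^j U$ and $y^a\p_y\p_x^\alpha\p_t^j U$ in $C^{\gamma}$ up to $\{y=0\}$ for some $\gamma=\gamma(n,a)>0$. In particular $h\overset{\rm def}{=}U_t-\Delta_x U$ and all its $(x,t)$-derivatives are $C^\infty$ in $(x,t)$ and $C^{\gamma'}$ in $y$ up to $\{y=0\}$ for some $\gamma'>0$.

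It remains to recover the $y$-derivatives. Integrating the identity $\p_y(y^aU_y)=y^a(U_t-\Delta_xU)=y^ah$ from $0$ to $y$, which is legitimate since the evenness forces $y^aU_y\to0$ as $y\to0^+$, gives
\[
y^aU_y(x,y,t)=\int_0^y s^a\,h(x,s,t)\,ds,\qquad\text{so}\qquad \frac{U_y(x,y,t)}{y}=y^{-a-1}\int_0^y s^a\,h(x,s,t)\,ds .
\]
Subtracting $h(x,0,t)/(a+1)$ and using the $C^{\gamma'}$-continuity of $h$ in $y$ together with $a>-1$ (so that $s^a$ is integrable at $0$), one checks that $U_y/y$ extends $C^{\gamma'}$-continuously up to $\{y=0\}$ with boundary value $h(x,0,t)/(a+1)$; the same computation applied to $\p_x^\alpha\p_t^j h$ gives $\p_x^\alpha\p_t^j(U_y/y)\in C^{\gamma'}$. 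Consequently $U_{yy}=h-\tfrac{a}{y}U_y$ and $\p_{x_i}U_y=y^{-a}\int_0^y s^a\p_{x_i}h\,ds$, and likewise all second order derivatives of $U$, are $C^{\gamma'}$ up to $\{y=0\}$; combined with the established smoothness in $(x,t)$ this yields $U\in H^{2+\beta,(2+\beta)/2}(\Q_r)$ for every $r<1$, with $\beta=\min(\gamma',1)$, together with the corresponding interior estimate.

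The step I expect to be the main obstacle is controlling the singular first order term $\tfrac{a}{y}U_y$ of the Bessel operator near $\{y=0\}$: one must show that it (equivalently $U_y/y$) extends Hölder-continuously across the thin set, and this is exactly where the integral representation of $y^aU_y$, the evenness of $U$, and the restriction $a\in(-1,1)$ are used. The remaining parts of the argument are routine, resting on the weighted Schauder estimate of Proposition~\ref{ac1} and classical parabolic interior theory away from $\{y=0\}$.
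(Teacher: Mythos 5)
Your proposal is correct and follows essentially the same route as the paper: smoothness in $(x,t)$ by translation invariance and iteration, the integral representation $y^aU_y=\int_0^y s^a\,g\,ds$ enabled by evenness, and the resulting H\"older continuity of $U_y/y$ and hence of $U_{yy}$. The only cosmetic difference is that the paper inserts an intermediate step invoking the conjugate equation $\mathscr{L}_{-a}(y^aU_y)=0$ and cites Lemma~7.5 of \cite{STV} for the H\"older continuity of $U_y/y$ in $y$, whereas you carry out that estimate directly; the key computation is the same.
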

\begin{rmrk}
In Lemma~\ref{smooth} above, it seems possible to assert that $U$ is in fact smooth up to $\{y=0\}$ by a bootstrap argument as in the proof of  Lemma~7.6 and 7.7 in \cite{STV}. We however don't address  such a  higher regularity result over here because for our purpose,  such a $H^{2 + \beta, \frac{2+\beta}{2}}$ regularity result for symmetric solutions suffices. 

\end{rmrk}

\begin{proof}
We follow the approach as in \cite{STV} for the elliptic case. We first note that from the De Giorgi-Nash-Moser theory for such degenerate parabolic equations as in \cite{CSe}, we have that $U$ is H\"older continuous. Then by using the translation variance of the equation in $x, t$, we can assert that $U$ is smooth in $x, t$ up to $\{y=0\}$. This later fact can be established by a repeated difference quotient type argument as in Section~5 in \cite{BG}.  Next we also have that $w=y^a U_y$ is a weak solution to the conjugate PDE 
\[
\mathscr{L}_{-a} w=0
\]
and thus $y^a U_y$ is H\"older continuous up to $\{y=0\}$ again by the results in \cite{CSe}. Now from \eqref{zr} it follows that 
\begin{equation}\label{od}
y^{-a} \partial_y ( y^a U_y) = U_{yy} + \frac{a}{y} U_y = - \Delta_x U + U_t = g(X, t).
\end{equation}
Thus, $\mathcal F=U_{yy} + \frac{a}{y} U_y$ is smooth in $x, t$. Moreover, $\mathcal F$ is H\"older in $X,t$ up to $\{y=0\}$.  Now since $U$ is even in $y$, we can restrict it in $\Q_1^+$ and express using \eqref{od} in the following way, 
\begin{equation}
y^a U_y(X,t)= \int_{0}^y z^a g(x,z, t) dz.
\end{equation}
Hence, 
\begin{equation}\label{tg}
\mathcal G=\frac{1}{y} U_y= \frac{1}{y^{1+a}} \int_{0}^y z^a (g(x,z, t)- g(x, 0, t)) dz + \frac{g(x, 0, t)}{1+a}.
\end{equation}
The H\"older continuity of $\mathcal G $ in $x,t$ now follows from the H\"older continuity of $g$. By an exact analogous argument as in the proof of Lemma~7.5 in \cite{STV} using the expression for $\mathcal G$ as in \eqref{tg}, we obtain the H\"older continuity of $\mathcal G$ in $y$. From the H\"older continuity of $\frac{a}{y} U_y, g$ and \eqref{od} if follows that $U_{yy}$ is H\"older continuous up to $\{y=0\}$. This then implies  that $\Delta_X U - U_t$ is H\"older continuous up to $\{y=0\}$. Moreover  since $U$ restricted to $\{y=0\}$ is smooth, therefore  the conclusion of the lemma follows from classical boundary Schauder theory for the heat operator.
\end{proof}

Our next result provides ``almost'' Lipschitz estimate with respect to the parabolic distance  for $\nabla_X V$ when $V$ is a symmetric solution to the nonhomogeneous equation  \eqref{nhom} with bounded $f$. 

\begin{prop}\label{ac1} 
Let $V\in W^{1,2}(\Q_1,|y|^a dXdt)$ be a weak solution of
\begin{equation}\label{eq:Labdd}
\La V=|y|^a f\quad\text{in }\Q_1,
\end{equation}
with $V(x,-y,t) = V(x,y,t)$. Then, for any 
$\alpha\in (0,1)$ and $r< 1$ we have $\nabla_X  V\in H^{\alpha} (\Q_r)$ 
 and moreover the following estimate holds, 
\begin{equation}\label{K}
\|\nabla_X V\|_{H^{\alpha, \alpha/2}(\Q_r)}\leq C(r,a,n, \alpha) (\\|y|^{a/2} V\|_{L^2(\Q_1)}+\|f\|_{L^\infty(\Q_1)}).
\end{equation}
\end{prop}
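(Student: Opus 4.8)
The plan is to establish a weighted Campanato-type decay estimate for $\nabla_X V$ at every point of $\overline{\Q_r^+}$ and on all small scales, by the classical freezing-and-comparison scheme, adapting to the parabolic setting the method used for the degenerate \emph{elliptic} Schauder theory in \cite{STV}. Using the even symmetry of $V$ one works in $\overline{\Q_1^+}$; on any region $\{|y|>\delta\}$ the operator $\La$ is uniformly parabolic with smooth coefficients, so interior parabolic Schauder estimates already give $\nabla_X V\in H^{\alpha,\alpha/2}$ there (with constants blowing up as $\delta\to0$), and the only genuine point is the behaviour near $\{y=0\}$. Writing $d\mu=|y|^a\,dX\,dt$, and for $Z_0=(X_0,t_0)\in\overline{\Q_r^+}$ and $\rho$ small putting
\[
\Psi_V(Z_0,\rho)=\frac{1}{\mu(\Q_\rho(Z_0))}\int_{\Q_\rho(Z_0)}\bigl|\nabla_X V-(\nabla_X V)_{Z_0,\rho}\bigr|^2\,d\mu,
\]
with $(\nabla_X V)_{Z_0,\rho}$ the $\mu$-average of $\nabla_X V$ over $\Q_\rho(Z_0)$, the goal is to prove $\Psi_V(Z_0,\rho)\le C\rho^{2\alpha}$ uniformly in $Z_0$, for every $\alpha\in(0,1)$, with $C$ controlled by $\||y|^{a/2}V\|_{L^2(\Q_1)}$ and $\|f\|_{L^\infty(\Q_1)}$. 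Once this holds, \eqref{K} follows from the characterization of parabolic H\"older classes by weighted Campanato conditions on the doubling metric measure space $(\Q_r,\mu,\ \text{parabolic metric})$, the parabolic counterpart of the statement used in \cite{STV}.

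The heart of the matter is the comparison step. First I would fix a point $Z_0=(x_0,0,t_0)$ on $\{y=0\}$ and, on $\Q_\rho=\Q_\rho(Z_0)$, let $W$ be the unique solution (by \cite{CSe}) of $\La W=0$ in $\Q_\rho$ with $W=V$ on the parabolic boundary $\partial_p\Q_\rho$. Since $\Q_\rho$ and $\La$ are invariant under $y\mapsto-y$ and $V|_{\partial_p\Q_\rho}$ is even, uniqueness forces $W$ to be even in $y$, so Lemma~\ref{smooth} applies to $W$; its proof, which goes through \cite{CSe} and interior Schauder, readily gives, after rescaling and combining with the De Giorgi--Nash--Moser bound of \cite{CSe}, the scale-invariant estimate
\[
\sup_{\Q_{\rho/2}}\bigl(|\nabla_X W|+\rho\,|\nabla_X^2 W|\bigr)\le \frac{C}{\rho}\Bigl(\frac{1}{\mu(\Q_\rho)}\int_{\Q_\rho}W^2\,d\mu\Bigr)^{1/2},
\]
hence the Lipschitz-type decay $\Psi_W(Z_0,\lambda\rho)\le C\lambda^{2}\,\Psi_W(Z_0,\rho/2)$ for $0<\lambda<1/2$. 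On the other hand $V-W$ solves $\La(V-W)=|y|^a f$ in $\Q_\rho$ with zero data on $\partial_p\Q_\rho$, so testing with $V-W$, using the vanishing initial slice and the weighted Poincar\'e inequality (valid since $|y|^a\in A_2$), and recalling $\mu(\Q_\rho)\asymp\rho^{\,n+a+3}$, one gets
\[
\frac{1}{\mu(\Q_\rho)}\int_{\Q_\rho}|\nabla_X(V-W)|^2\,d\mu\le C\rho^{2}\,\|f\|_{L^\infty(\Q_1)}^2 .
\]
Combining the two through the triangle inequality, and bounding $\Psi_W(Z_0,\rho/2)$ by $\Psi_V(Z_0,\rho)$ plus the comparison error, yields the recursion
\[
\Psi_V(Z_0,\lambda\rho)\le C_0\lambda^{2}\,\Psi_V(Z_0,\rho)+C_\lambda\,\rho^{2}\,\|f\|_{L^\infty(\Q_1)}^2 .
\]
For a point $Z_0$ with $d=|y_0|>0$ the same recursion is expected to hold: on scales $\rho\lesssim d$ one freezes $|y|^a$ at $y_0$ (the weight is then comparable to the constant $d^{a}$, so this reduces to ordinary parabolic Schauder), and on scales $\rho\gtrsim d$ one uses $\Q_\rho(Z_0)\subset\Q_{3\rho}(x_0,0,t_0)$ to fall back to the case of a center on $\{y=0\}$, the matching near $\rho\approx d$ being the usual interior--boundary bookkeeping.

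Once this recursion is available uniformly in the center, the borderline Campanato iteration lemma --- marginal because the forcing exponent $2$ equals the homogeneous decay exponent --- gives, for every $\alpha\in(0,1)$,
\[
\Psi_V(Z_0,\rho)\le C(\alpha)\Bigl(\Psi_V(Z_0,\rho_0)+\|f\|_{L^\infty(\Q_1)}^2\Bigr)\Bigl(\frac{\rho}{\rho_0}\Bigr)^{2\alpha},
\]
with $C(\alpha)\to\infty$ as $\alpha\to1$, which is precisely why the estimate is only \emph{almost} Lipschitz. The top-scale quantity $\Psi_V(Z_0,\rho_0)$ is dominated by $\mu(\Q_{\rho_0})^{-1}\int_{\Q_{\rho_0}}|\nabla_X V|^2\,d\mu$, controlled in turn by $\||y|^{a/2}V\|_{L^2(\Q_1)}^2+\|f\|_{L^\infty(\Q_1)}^2$ via a parabolic Caccioppoli inequality together with the interior $L^\infty$ bound $\|V\|_{L^\infty(\Q_{7/8})}\le C(\||y|^{a/2}V\|_{L^2(\Q_1)}+\|f\|_{L^\infty(\Q_1)})$ from \cite{CSe}. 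Plugging these into the Campanato characterization mentioned above produces $\nabla_X V\in H^{\alpha,\alpha/2}(\Q_r)$ along with \eqref{K}.

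The step I expect to be the main obstacle is this comparison step near $\{y=0\}$: upgrading the qualitative regularity of Lemma~\ref{smooth} to a genuinely \emph{scale-invariant} decay estimate for $\nabla_X W$ with constant depending only on the weighted $L^2$-norm of $W$, and merging the ``frozen, uniformly parabolic'' regime on scales below $\operatorname{dist}(Z_0,\{y=0\})$ with the ``symmetric at $\{y=0\}$'' regime above it, so that the Campanato recursion holds with a \emph{single} set of constants valid for all centers and all small scales. The marginal nature of the iteration only costs the endpoint $\alpha=1$ and is otherwise harmless.
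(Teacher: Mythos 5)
Your scheme is correct in outline, but it follows a genuinely different route from the paper's. The paper runs a Caffarelli-style ``approximation by a regularized problem'' argument: a \emph{compactness lemma} (Lemma~\ref{L:com}) shows that, after normalizing, any solution with small right-hand side is uniformly close to a homogeneous symmetric solution $V_0$; Lemma~\ref{smooth} gives $V_0\in H^{2+\beta,(2+\beta)/2}$ with $\partial_yV_0(0,0)=0$, which produces an affine approximation at one small scale $\lambda$ (Corollary~\ref{C:com}); iterating this over dyadic scales gives the pointwise $C^{1,\alpha}$ estimate $|V(Z)-\ell_0(x)|\lesssim|Z|^{1+\alpha}$ at centres on $\{y=0\}$; finally a chaining argument using interior parabolic Schauder estimates on $\{|y|>0\}$ upgrades the pointwise approximation to uniform H\"older bounds for $\nabla_xV$ and $\partial_yV$. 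Your Campanato-type argument replaces compactness by a direct energy estimate on the difference $V-W$ (with $W$ the homogeneous comparison solution, using the weighted Poincar\'e inequality to gain the $\rho^2\|f\|_\infty^2$ factor), replaces the pointwise affine approximation by the weighted Campanato seminorm $\Psi_V$, and replaces the final chaining step by the Campanato characterization of parabolic H\"older spaces on the doubling measure space $(\Q_r,|y|^a\,dXdt)$. What the paper's route buys is that the compactness argument is qualitatively transparent and sidesteps the quantitative inputs you need (weighted parabolic Poincar\'e and the Campanato--H\"older equivalence for $A_2$ weights); what your route buys is a fully quantitative, compactness-free derivation in which the ``marginal'' nature of the exponent, and hence the loss of $\alpha=1$, is visible from the start.

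The one place where I would push you for detail is exactly the step you flag yourself: passing from the scale-invariant $C^2$ bound $\sup_{\Q_{\rho/2}}|\nabla^2W|\lesssim\rho^{-2}\bigl(\mu(\Q_\rho)^{-1}\int_{\Q_\rho}W^2\,d\mu\bigr)^{1/2}$ to the decay $\Psi_W(\lambda\rho)\le C\lambda^2\Psi_W(\rho/2)$. For the $\nabla_xW$ part this is clean, because $\partial_{x_i}W-c_i$ is again a symmetric solution of $\La\cdot=0$, so one can apply the rescaled estimate directly to it. For the $y$-component it is not, since $W_y$ does not solve $\La\cdot=0$; one must instead argue through the conjugate equation $\mathscr L_{-a}(|y|^aW_y)=0$ and the vanishing $W_y(\cdot,0,\cdot)=0$, together with a Poincar\'e--Wirtinger inequality against an affine function $\ell$ with $L_a\ell=0$ (which forces the $y$-slope of $\ell$ to be zero, consistently with the symmetry). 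This is precisely the point where the counterexample in the remark following the Proposition, $V=|y|^{-a}y$, shows that symmetry cannot be dropped. None of this is a fatal gap --- the paper confronts the same issue and resolves it via $\partial_yV_0(0,0)=0$ --- but the bookkeeping needed to carry a single set of constants across the two regimes $\rho\lesssim|y_0|$ and $\rho\gtrsim|y_0|$ is nontrivial and would need to be written out explicitly.
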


\begin{rmrk}
We stress that, even with a right-hand side $f\equiv 0$, Proposition~\ref{ac1} fails to be true if we remove the assumption that $V(x,-y,t) = V(x,y,t)$. The function $V(x,y) = |y|^{-a} y$ belongs to $W^{1,2}(\B_1,|y|^a dX)$, and is a weak solution to the stationary equation $L_a V = 0$ in $\B_1$. Its weak derivative $V_y(X) = (1-a) |y|^{-a}$ is not continuous in $\B_1$, unless $-1<a\le 0$.
\end{rmrk}

The proof of Proposition~\ref{ac1} will be based on some preliminary results.
We begin with the following compactness  lemma.

\begin{lemma}\label{L:com}
Let $V$ be a weak solution to \eqref{eq:Labdd} such that
$\|V\|_{L^2(\Q_1,|y|^a dXdt)}\le 1$ and $V(x,-y,t) = V(x,y,t)$. For any
$\ve>0$ there exists $\delta=\delta(\ve,n,a)>0$ such that if
$\|f\|_{L^\infty(\Q_1)} \le \delta$, then there exists a solution
$V_0$ to $\La V_0=0$ in $\Q_{1}$ such that $\|V_0\|_{L^2(\Q_1, |y|^a dX dt)}\leq 1$
and $V_0(x,-y,t) = V_0(x,y,t)$, with
\begin{equation}\label{con}
\|V - V_0\|_{L^\infty(\Q_{1/2})} \le \ve.
\end{equation}
\end{lemma}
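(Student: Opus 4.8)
The plan is to run the usual compactness-and-contradiction argument. I would suppose \eqref{con} fails for some fixed $\ve_0>0$, producing sequences $V_k\in W^{1,2}(\Q_1,|y|^a dXdt)$ and $f_k\in L^\infty(\Q_1)$ with $\La V_k=|y|^a f_k$ in $\Q_1$, $V_k(x,-y,t)=V_k(x,y,t)$, $\|V_k\|_{L^2(\Q_1,|y|^a dXdt)}\le 1$, $\|f_k\|_{L^\infty(\Q_1)}\le 1/k$, and $\|V_k-W\|_{L^\infty(\Q_{1/2})}>\ve_0$ for \emph{every} weak solution $W$ of $\La W=0$ in $\Q_1$ that is even in $y$ and satisfies $\|W\|_{L^2(\Q_1,|y|^a dXdt)}\le 1$. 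The goal is to extract a subsequential limit of $(V_k)$ which is itself such a $W$, giving the contradiction.

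The first real step is to gather uniform interior estimates for $\La$. Since $|y|^a$ is a Muckenhoupt $A_2$ weight, $\La$ falls in the class treated by Chiarenza--Serapioni \cite{CSe}, and with the bounded right-hand sides $f_k$ (used exactly as in the derivation of \eqref{p1}) their local boundedness and interior parabolic H\"older estimates give, for some $\alpha=\alpha(n,a)\in(0,1)$ and $C=C(n,a)$ independent of $k$,
\[
\|V_k\|_{H^{\alpha,\alpha/2}(\overline{\Q_{3/4}})}\le C\big(\|V_k\|_{L^2(\Q_1,|y|^a dXdt)}+\|f_k\|_{L^\infty(\Q_1)}\big)\le 2C.
\]
A standard weighted parabolic energy (Caccioppoli) inequality for $\La V_k=|y|^a f_k$ additionally yields $\|\nabla_X V_k\|_{L^2(\Q_{3/4},|y|^a dXdt)}\le C$. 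Repeating these estimates on the exhausting cylinders $\Q_{1-1/m}$, $m\ge 2$, gives analogous (now $m$-dependent) bounds on every compact subcylinder of $\Q_1$.

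Next I would pass to the limit. By Arzel\`a--Ascoli together with a diagonal argument in $m$, a subsequence $V_{k_j}$ converges locally uniformly in $\Q_1$ to some $V_0$, with $\nabla_X V_{k_j}\rightharpoonup\nabla_X V_0$ weakly in $L^2_{\mathrm{loc}}(\Q_1,|y|^a dXdt)$; evenness in $y$ persists, and by Fatou/monotone convergence $\|V_0\|_{L^2(\Q_1,|y|^a dXdt)}\le\liminf_j\|V_{k_j}\|_{L^2(\Q_1,|y|^a dXdt)}\le 1$. Letting $j\to\infty$ in the time-integrated weak formulation
\[
\int_{\Q_1}\big(-V_{k_j}\,\partial_t\varphi+\langle\nabla_X V_{k_j},\nabla_X\varphi\rangle\big)|y|^a=\int_{\Q_1}f_{k_j}\,\varphi\,|y|^a,\qquad \varphi\in C_0^\infty\big(\B_1\times(-1,0)\big),
\]
and using the local strong $L^2$-convergence of $V_{k_j}$, the weak convergence of the weighted gradients, and $f_{k_j}\to 0$ in $L^\infty$, one finds $\La V_0=0$ weakly in $\Q_1$. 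Thus $V_0$ is an admissible competitor $W$; since $\overline{\Q_{1/2}}\subset\subset\Q_{3/4}$ we also have $\|V_{k_j}-V_0\|_{L^\infty(\Q_{1/2})}\to 0$, contradicting the choice of the sequences. Undoing the contradiction produces the required $\delta=\delta(\ve,n,a)$ and the estimate \eqref{con}.

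The hard part is really just assembling the correct uniform interior estimates for $\La$ with merely bounded inhomogeneity — local $L^\infty$ bounds, interior parabolic H\"older continuity, and the energy bound — all of which rest on the $A_2$-structure of $|y|^a$ and are supplied by \cite{CSe} together with a routine weighted energy computation; the exhaustion needed so that the limit $V_0$ solves the equation on all of $\Q_1$ with the global $L^2$ bound, the lower semicontinuity of the $L^2$ norm, and the contradiction bookkeeping are standard.
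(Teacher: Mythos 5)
Your proof is correct and follows essentially the same compactness-by-contradiction scheme as the paper: uniform interior H\"older estimates from \cite{CSe} and a weighted Caccioppoli bound give local equicontinuity and $W^{1,2}_{\rm loc}(|y|^a)$ bounds, Arzel\`a--Ascoli plus a diagonal argument produces a locally uniform limit $V_0$, and passing to the limit in the weak formulation together with Fatou shows $V_0$ is an admissible competitor, contradicting the assumed lower bound. The paper's proof is the same argument, only stated slightly more tersely on the exhaustion and the passage to the limit.
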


\begin{proof}
We argue by contradiction and assume the existence of $\ve_0>0$ such that for every  $k\in\mathbb{N}$ there exist $V_k\in W^{1,2}(\Q_1,|y|^a dX)$ and $f_k\in L^\infty(\Q_1)$ such that
\begin{equation}\label{uk}
\begin{cases}
  L_a V_k = |y|^a f_k\text{ in }\Q_1\\
\|V_k\|_{L^2(\Q_1,|y|^a dXdt)}\leq 1,
\\
\|f_k\|_{L^{\infty}(\Q_1)} \leq \frac 1k,
\end{cases}
\end{equation}
but for which we have for every solution $W$ of $\La W=0$ in $\Q_{1}$
such that $\|W\|_{L^2(\Q_1,|y|^a dXdt)}\leq 1$ and $W(x,-y,t) = W(x,y,t)$,
\begin{equation}\label{contr}
\|V_k-W\|_{L^{\infty}(\Q_{1/2})}\geq \ve_0.
\end{equation}
We will show that \eqref{contr} leads to a contradiction.

Now from the H\"older regularity result in \cite{CSe}, we see that there exists $\beta = \beta(n,a)\in (0,1)$ such that for every $k\in \mathbb N$ one has
\[
[V_k]_{H^{\beta, \beta/2}(\Q_\rho)} \le C(n,a,\rho).
\]
By the theorem of Ascoli-Arzel\`a we can extract a subsequence, which
we keep denoting $\{V_k\}_{k\in \mathbb N}$, and a function $V_0\in
H^{\beta, \beta/2}(\Q_{1})$, such that $V_k \to V_0$ uniformly on compact
subsets of $\Q_{1}$. Suppose we can prove that 
\begin{equation}\label{con2}
V_0\in W^{1,2}_{\rm loc}(\Q_{1},|y|^a dXdt),\quad\text{and}\quad \La V_0 = 0\quad\text{in}\ \Q_{1}.
\end{equation}
Since we clearly have $V_0(x,-y,t) = V_0(x,y,t)$, and also
$\|V_0\|_{L^2(\Q_1,|y|^a dXdt)}\leq 1$ by Fatou's lemma, 
\eqref{con2} leads to a contradiction since by \eqref{contr} and
uniform convergence, we would have 
\[
0 < \ve_0 \le \|V_k-V_0\|_{L^{\infty}(\Q_{1/2})}\to0,\quad\text{as }k\to\infty.
\]
To establish \eqref{con2} we note that by for any
ball $\Q_\rho$, $0<\rho<1$, by the Caccioppoli inequality in \cite{CSe} and by \eqref{uk} again, we infer that 
\[
\|V_k\|_{W^{1,2}(\Q_\rho,|y|^a dXdt)} \le C(n,a,\rho).
\]
Therefore, possibly passing to a subsequence, we have
\[
V_k\to V_0\quad\text{weakly in}\ W^{1,2}(\Q_\rho,|y|^a dXdt)
\]
and consequently $\La V_0 = 0$ in $\Q_\rho$ by passing to the limit in
\eqref{uk}. Thus, \eqref{con2} holds, and the proof of the lemma is complete.
\end{proof}

\begin{cor}\label{C:com}
Let $V$ be a weak solution to \eqref{eq:Labdd} such that
$\|V\|_{L^2(\Q_1,|y|^a dXdt)}\le 1$ and $ V(x,-y,t) = V(x,y,t)$. Given any
$\alpha\in (0,1)$ and $\mu>0$ there are constants $\delta, \la>0$,
depending only on $n, a$ and $\alpha$, and $\mu$ such that if
\begin{equation}\label{cor1}
\|f\|_{L^\infty(\Q_1)} \le \delta,
\end{equation}
then there exists an affine function in the $x$-variable, $\ell(x)$, such that
\begin{equation}\label{cor2}
\|V - \ell\|_{L^\infty(\Q_\la)} \le \mu \la^{1+\alpha}.
\end{equation}
\end{cor}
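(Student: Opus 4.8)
The plan is to run the classical one-step \emph{improvement of flatness} argument. Given $\alpha\in(0,1)$ and $\mu>0$, I would first invoke Lemma~\ref{L:com} to approximate $V$ on $\Q_{1/2}$ by an even solution $V_0$ of the homogeneous equation $\La V_0=0$ in $\Q_1$ with $\|V_0\|_{L^2(\Q_1,|y|^a dXdt)}\le 1$; then I would replace $V_0$ near the origin by its affine-in-$x$ Taylor polynomial, using the interior smoothness of $V_0$ provided by Lemma~\ref{smooth}; and finally I would tune the two free parameters $\lambda$ and $\delta$ so that the approximation error and the Taylor error together do not exceed $\mu\lambda^{1+\alpha}$.

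Concretely, by Lemma~\ref{smooth} applied to the even reflection of $V_0$, together with a standard rescaling/compactness argument, there is a constant $C_0=C_0(n,a)$ such that every even solution $V_0$ of $\La V_0=0$ in $\Q_1$ with $\|V_0\|_{L^2(\Q_1,|y|^a dXdt)}\le 1$ obeys $\|V_0\|_{H^{2+\beta,(2+\beta)/2}(\overline{\Q_{1/2}})}\le C_0$. Set
\[
\ell(x)=V_0(0,0,0)+\nabla_x V_0(0,0,0)\cdot x ,
\]
which is affine in $x$ alone. Here the evenness of $V_0$ in $y$ is used crucially: since $V_0\in C^1(\overline{\Q_{1/2}})$ and $V_0(x,-y,t)=V_0(x,y,t)$, we have $\partial_y V_0(x,0,t)\equiv 0$, so no $y$-linear term is needed. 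Expanding $V_0$ to second order in the parabolic sense at the origin and discarding the quadratic-in-$X$ and linear-in-$t$ contributions, one gets, for every $\lambda\in(0,1/2]$,
\[
\|V_0-\ell\|_{L^\infty(\Q_\lambda)}\le C_0\lambda^2 ,
\]
because in $\Q_\lambda$ one has $|X|<\lambda$, $|t|^{1/2}<\lambda$, and the remainder is $O(\lambda^{2+\beta})$.

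It remains to fix the parameters. Since $1-\alpha>0$, choose $\lambda=\lambda(n,a,\alpha,\mu)\in(0,1/2]$ so small that $C_0\lambda^2\le\tfrac12\mu\lambda^{1+\alpha}$, i.e.\ $\lambda\le\min\{1/2,\ (\mu/(2C_0))^{1/(1-\alpha)}\}$. Put $\ve:=\tfrac12\mu\lambda^{1+\alpha}$ and let $\delta=\delta(\ve,n,a)=\delta(n,a,\alpha,\mu)>0$ be the constant supplied by Lemma~\ref{L:com} for this $\ve$. If $\|f\|_{L^\infty(\Q_1)}\le\delta$, Lemma~\ref{L:com} yields the approximating $V_0$ with $\|V-V_0\|_{L^\infty(\Q_{1/2})}\le\ve$, and since $\Q_\lambda\subset\Q_{1/2}$ we conclude
\[
\|V-\ell\|_{L^\infty(\Q_\lambda)}\le\|V-V_0\|_{L^\infty(\Q_\lambda)}+\|V_0-\ell\|_{L^\infty(\Q_\lambda)}\le\ve+C_0\lambda^2\le\mu\lambda^{1+\alpha},
\]
which is \eqref{cor2}. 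The only non-routine ingredient is the uniform interior $H^{2+\beta,(2+\beta)/2}$ bound $C_0=C_0(n,a)$ for even homogeneous solutions in terms of their weighted $L^2$ norm; this is exactly where Lemma~\ref{smooth}, the even reflection, and the hypothesis $a\in(-1,1)$ enter, while the rest is bookkeeping with Taylor's theorem and the choice of the small parameters $\lambda$ and $\delta$.
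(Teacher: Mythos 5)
Your proof is correct and follows essentially the same approach as the paper: invoke Lemma~\ref{L:com} for the $L^\infty$-approximation by an even homogeneous solution $V_0$, use Lemma~\ref{smooth} for the uniform $H^{2+\beta,(2+\beta)/2}$ bound and the evenness of $V_0$ to kill the $y$-linear Taylor term, then tune $\lambda$ and $\ve$ (hence $\delta$) so that the sum $\ve + C_0\lambda^2$ falls below $\mu\lambda^{1+\alpha}$. If anything, your parameter bookkeeping is slightly more careful than the paper's, since you explicitly carry $\mu$ into both the choice of $\lambda$ and the choice of $\ve$.
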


\begin{proof}
By Lemma~\ref{L:com} for any $\ve>0$ there exists $\delta>0$ such that
for any given $f$ satisfying \eqref{cor1} there exists a weak solution
$V_0$ of $\La v_0=0$ such that $\|V_0\|_{L^2(\Q_1,|y|^a dXdt)}\leq M$, $V_0(x,-y,t) = V_0(x,y,t)$, and 
\begin{equation}\label{cor3}
\|V - V_0\|_{L^\infty(\Q_{1/2})} \le \ve.
\end{equation}
In view of Lemma~\ref{smooth} we know that $V_0\in
H^{2+\beta, \frac{2+\beta}{2}}(\overline{\Q_{1/2}})$, with $\|V_0\|_{H^{2+\beta, \frac{2+\beta}{2}}(\overline{\Q_{1/2}})}
\le C(n,a)$. In particular, by setting $\ell(x)=V_0(0)+\langle
\nabla_x V_0(0),x\rangle$ and using that $\partial_yV_0(0, 0)=0$, we have
that for any $0<\lambda\leq 1/2$
$$
\|V_0(X,t)-\ell(x)\|_{L^\infty(\Q_{\lambda})}\leq C(n,a)\lambda^2.
$$
Consequently,
\[
\|V - \ell\|_{L^\infty(\Q_\la)} \le \|V - V_0\|_{L^\infty(\Q_\la)} + \|V_0 - \ell\|_{L^\infty(\Q_\la)} \le \ve + C(n,a) \la^2.
\] 
If we now choose $\la>0$ such that $C(n,a) \la^2 = \la^{1+\alpha}/2$ and $\ve = \la^{1+\alpha}/2$ we reach the desired conclusion.
\end{proof}

We are now ready to provide the 

\begin{proof}[Proof of Proposition~\ref{ac1}]
It will be sufficient to prove the proposition in the case
$Q_{1/8}$ as the general case will follow by a covering argument.

Let now $V$ be as in the statement of the proposition and let $\alpha\in
(0,1)$ be arbitrary. Let $\mu=\mu(n,a)=1/\|1\|_{L^2(\Q_1,|y|^a dXdt)}$. Denote by $\delta, \la$ the numbers in the statement of
Corollary \ref{C:com}. By dividing $V$ by $\|V\|_{L^2(\Q_1,|y|^a
  dXdt)}+\delta^{-1}\|f\|_{L^\infty(\Q_1)}$, we may assume without loss
of generality that
\begin{equation}\label{cor4}
\|V\|_{L^2(\Q_1,|y|^a
  dXdt)}\leq 1,\quad \|f\|_{L^\infty(\Q_1)}\leq\delta.
\end{equation}
By \eqref{cor2} in Corollary \ref{C:com} we thus find an affine function in $x$, $\ell(x)$, such that
\begin{equation}\label{cor3-1}
\|V - \ell\|_{L^\infty(\Q_\la)} \le \mu \la^{1+\alpha}.
\end{equation}
We now claim that for every $k\in \mathbb N$ there exists $\ell_k$ affine in $x$ such that
\begin{equation}\label{cor5}
\|V - \ell_k\|_{L^\infty(\Q_{\la^k})} \le \mu\la^{k(1+\alpha)}.
\end{equation}
We prove the claim \eqref{cor5} by induction. By taking $\ell_1 = \ell$, it is clear that \eqref{cor5} is true when $k=1$. Assume it is true for a certain $k\ge 1$, and thus exists $\ell_k$ as in \eqref{cor5}. We want to show that it is true also for $k+1$. Let
\[
V_k(X,t) = \frac{V(\la^k X, \la^{2k} t) - \ell_k(\la^k x)}{\la^{k(1+\alpha)}},\quad (X,t)\in \Q_1.
\]
By the first estimate in \eqref{cor5} we know that
$\|V_k\|_{L^\infty(\Q_1)} \le \mu$ and consequently
$\|V_k\|_{L^2(\Q_1,|y|^a dXdt)}\leq 1$, by the choice of $\mu$.
Furthermore, since $L_a(\ell_k) = 0$ we have
\[
\La V_k = \la^{2k - k(1+\alpha)} |y|^a f(\la^k X, \la^{2k} t) \overset{\rm def}{=} |y|^a f_k(X,t). 
\]
Since $\alpha<1$ and $\la \le 1$, we have $\la^{2k - k(1+\alpha)}\le
1$ and therefore by \eqref{cor4} 
\[
\|f_k\|_{L^\infty(\Q_1)} \le \delta,
\]
and thus $V_k$ and $f_k$ satisfy the assumptions in Corollary~\ref{C:com}. As a consequence, there exists an affine function in $x$, $\tilde \ell_k(x)$, such that
\begin{equation}\label{cor6}
\|V_k - \tilde\ell_k\|_{L^\infty(\Q_\la)} \le \mu \la^{1+\alpha}.
\end{equation}
If we let
\[
\ell_{k+1}(x) = \ell_k(x) + \la^{k(1+\alpha)} \tilde \ell_k(\la^{-k} x),
\]
then for $(X,t)\in \Q_{\la^{k+1}}$ the point $(X',t' )= (\la^{-k} X, \la^{-2k} t)\in \Q_\la$ and we obtain from \eqref{cor6}
\begin{align*}
V(X,t) - \ell_{k+1}(x) & = V(\la^k X', \la^{2k} t') - \ell_{k+1}(\la^k x') \\
&= V(\la^k X', \la^{2k} t') - \ell_k(\la^k x') - \la^{k(1+\alpha)} \tilde \ell_k(x')
\\
& = \la^{k(1+\alpha)} \left(V_k(X', t') - \tilde\ell_k(x')\right).
\end{align*}
Therefore, by \eqref{cor6}
\[
\|V - \ell_{k+1}\|_{L^\infty(\Q_{\la^{k+1}})} = \la^{k(1+\alpha)} \|V_k - \tilde \ell_k\|_{L^\infty(\Q_\la)} \le  \mu\la^{(k+1)(1+\alpha)}.
\]
We have thus verified that  \eqref{cor5} holds for $k+1$. We further
note that \eqref{cor5} implies that
$$
\|\ell_k-\ell_{k+1}\|_{L^\infty(\Q_{\lambda^k})}\leq 2\lambda^{k(1+\alpha)}
$$
and therefore
$$
|\ell_k(0)-\ell_{k+1}(0)|\leq 2\mu\lambda^{k(1+\alpha)},\quad |\nabla
\ell_k(0)-\nabla\ell_{k+1}(0)|\leq 4\mu\lambda^{k\alpha}.
$$
In particular, there exists a limit $\ell_0(x)$ of the affine functions
$\ell_k$, as $k\to \infty$, and
$$
|\ell_k(0)-\ell_0(0)|\leq C\lambda^{k(1+\alpha)},\quad |\nabla
\ell_k(0)-\nabla\ell_0(0)|\leq C\lambda^{k\alpha}
$$
for $C=C(n,a,\alpha)$. Hence, we can conclude that
 by a standard argument
\begin{equation}\label{eq:v-affine-approx}
|V(X,t)-\ell_0(x)|\leq C|(X,t)|^{1+\alpha},\quad (X,t)\in\Q_{1/2}.
\end{equation}
where $|(X_1, t_1)- (X_2, t_2)|= |X_1-X_2| + |t_1-t_2|^{1/2}$.
In particular, $V$ is differentiable at the origin and we can write
that
$$
\ell_0(x)=V(0,0)+\langle\nabla_xV(0,0),x\rangle.
$$
Now, if we denote
$$
W(X,t)=V(X,t)-\ell_0(x),\quad (X,t)\in\Q_1,
$$
then by \eqref{eq:v-affine-approx} we will have
\begin{equation}\label{K12}
\sup_{\Q_{r}} |W|\leq C r^{1+\alpha},\quad 0<r\leq 1/2,
\end{equation}
with $C=C(n,a,\alpha)$.
Consider now the homogeneous rescalings of order $2$
$$
\tilde W_r(X,t)=\frac{W(rX, r^2 t)}{r^2},
$$
which satisfy
$$
\La (\tilde W_r)(X,t) = |y|^a f(rX,r^2t)\quad\text{in }\Q_{1/r}.
$$
We also have from \eqref{K12}
$$
\sup_{\Q_1} |\tilde W_r|\leq C r^{\alpha-1}.
$$
Besides, we can apply interior estimates to obtain that for $t_0 \in (-1, 0)$, 
\newcommand{\osc}{\operatornamewithlimits{osc}}
$$
\sup_{\Q_{1/4}((1/2)e_{n+1}, t_0)}|\nabla \tilde W_r|+ [\nabla \tilde
W_r]_{H^{\alpha, \alpha/2}(\Q_{1/4}((1/2)e_{n+1}, t_0))}\leq C r^{\alpha-1}, 
$$
with $C=C(n,a,\alpha)$.
Converting back to $W$, we have with $\tilde t_0= r^2 t_0$
\begin{equation}\label{eq:w-est}
\sup_{\Q_{r/4}((r/2)e_{n+1}, \tilde t_0)}|\nabla W|+r^\alpha [\nabla
W]_{H^{\alpha, \alpha/2}(\Q_{r/4}((r/2)e_{n+1}, \tilde t_0))}\leq C r^{\alpha}.
\end{equation}
Particularly, taking the $y$-component of $\nabla W$ only, we obtain
$$
\sup_{\Q_{r/4}((r/2)e_{n+1}, \tilde t_0)}|\partial_y V|+r^\alpha [\partial_y
V]_{H^{\alpha, \alpha/2}(\Q_{r/4}((r/2)e_{n+1}, \tilde t_0))}\leq C r^{\alpha}.
$$
If we now reposition the center of the ball to $(x,t)\in Q_{1/4}$, then for $|y|<1/4$ we will have
\begin{align}
  |\partial_y V|&\leq C |y|^\alpha,\label{eq:vy-est1}\\
  [\partial_yV]_{H^{\alpha, \alpha/2}(\Q_{|y|/2}(x,y,t))}&\leq C,\label{eq:vy-est2}
\end{align}
with $C=C(n,a,\alpha)$.
From here it is standard to conclude that
\begin{equation}\label{cor7}
\|\partial_y V\|_{H^{\alpha, \alpha/2}(\Q_{1/8})}\leq C.
\end{equation}
Indeed, let $(x^1,y^1, t^1)$ and $(x^2,y^2, t^2)$ be two points in $\Q_{1/8}$. Assume
$|y^1|\geq |y^2|$. Consider then two cases

1) $|(x^1,y^1,t^1)-(x^2,y^2,t^2)|\geq |y^1|/2$. Then by \eqref{eq:vy-est1}
\begin{align*}
|\partial_y V(x^1,y^1,t^1)-\partial_y V(x^2,y^2,t^2)|&\leq |\partial_y
                                         V(x^1,y^1,t^1)|+|\partial_y V(x^2,y^2,t^2)|\\
                                         & \leq 2 C(n,a,\alpha) |y^1|^\alpha\\
  &\leq C(n,a,\alpha) |(x^1,y^1,t^1)-(x^2,y^2,t^2)|^\alpha
\end{align*}

2) $|(x^1,y^1,t^1)-(x^2,y^2,t^2)|<|y^1|/2$. Then by \eqref{eq:vy-est2}
\begin{equation}\label{l1}
|\partial_y V(x^1,y^1,t^1)-\partial_y V(x^2,y^2,t^2)|\leq C(n,a,\alpha) |(x^1,y^1,t^1)-(x^2,y^2,t^2)|^\alpha.
\end{equation}
This proves \eqref{cor7}. It remains to show that $\nabla_x V\in
H^{\alpha, \alpha/2}(\Q_{1/8})$. By taking the $x$-components in
\eqref{eq:w-est}, we will have
$$
\sup_{\Q_{r/4}((r/2)e_{n+1}, -r^2/2)}|\nabla_x V(X,t)-\nabla_x V(0,0)|+r^\alpha
[\nabla_x V]_{H^{\alpha, \alpha/2}(\Q_{r/4}((r/2)e_{n+1}, -r^2/2))}\leq C r^{\alpha}
$$
and repositioning the origin to $(x,0,t)$ with $(x,t)\in Q_{1/4}$, we will
have that 
for any $|y|<1/4$

\begin{align}
  |\nabla_x V(x,y,t)-\nabla_x V(x,0,t)|&\leq C |y|^\alpha,\label{eq:vx-est1}\\
  [\nabla_xV(x,y,t)]_{\mathbb{H}^{\alpha}(\Q_{|y|/2}(x,y,t))}&\leq C.\label{eq:vx-est2}
\end{align}
In particular,
$$
\osc_{\Q_{|y|/2}(x,y,t)} \nabla_x V\leq C |y|^\alpha.
$$
Now, let $(x^1,t^1),(x^2,t^2)\in Q_{1/4}$ and $r=|(x^1,t^1)-(x^2,t^2)|$. Consider then two
points $(x^1,r)$ and $(x^2,r)$. Then from the oscillation estimate above
$$
|\nabla_x V(x^1,r, t^1)-\nabla_x V(x^2,r,t^2)|\leq
C r^\alpha
$$
and combined with \eqref{eq:vx-est1}
\begin{align*}
|\nabla_x
V(x^1,0, t^1)-\nabla_xV(x^2,0, t^2)|&\leq |\nabla_x V(x^1,0,t^1)-\nabla_x
  V(x^1,r,t^1)|\\
  &\qquad +|\nabla_x V(x^1,r, t^1)-\nabla_x V(x^2,r, t^2)|\\
  &\qquad+|\nabla_x
  V(x^2,r, t^2)-\nabla_x V(x^2,0, t^2)|\\&\leq C |(x^1, t^1)-(x^2, t^2)|^\alpha.
\end{align*}
Particularly, $\nabla_x V(\cdot,0)\in \mathbb{H}^{\alpha}(\Q_{1/4})$.
Now, considering the difference
$$
\nabla_xV(x,y,t)-\nabla_x V(x,0,t)
$$
and applying the same arguments as we did for $\partial_y V$, we can conclude that
$\nabla_xV-\nabla_x V(\cdot,0, \cdot)\in H^{\alpha, \alpha/2}(\Q_{1/8})$. Recalling also
that $\nabla_x V(\cdot,0, \cdot)\in H^{\alpha, \alpha/2}(Q_{1/4})$, we conclude that
$\nabla_x V\in H^{\alpha, \alpha/2}(\Q_{1/8})$, with bounds on the appropriate
$H^{\alpha, \alpha/2}$-norms depending only on $n$, $a$ and $\alpha$.
\end{proof}
We end this subsection with the following important remark.

\begin{rmrk}\label{conh}
With $V$ as in  Proposition~\ref{ac1}, for a given $\ve>0$ since 
\[
|V_y| \leq C(n, a, \ve) y^{1-\ve}
\]
therefore we have the following decay estimate for $y^a U_y$
\begin{equation}\label{dece}
|y^a V_y| \leq C(n, a, \ve) y^{1+a - \ve}
\end{equation}
From \eqref{dece}, it follows by arguing as in \eqref{eq:vy-est1}--\eqref{l1} that $y^a V_y$ is in  $H^{1+a-\ve, \frac{1+a-\ve}{2}}$ for a given $\ve>0$ up to $\{y=0\}$ and moreover an analogous estimate as in \eqref{K} holds for the corresponding H\"older norm. 
\end{rmrk}

\subsection{Regularity of odd solutions}
We now  establish regularity for odd solutions to $\La U= |y|^a f$ or equivalently  for solutions that vanish on $\{y=0\}$.

\begin{lemma}\label{odd}
Let $U$ be a solution to 
\begin{equation}
\begin{cases}
\La U= |y|^a f &\text{in $\Q_1^+$},
\\
U=0 &\text{on $\{y=0\}$},
\end{cases}
\end{equation}
where 
\begin{equation}\label{der}
\|f\|_{L^{\infty}(\Q_1^+)},\quad \|\nabla_x f\|_{L^{\infty}(\Q_1^+)} \leq K,\quad
|\partial_y f|  \leq Ky.
\end{equation}
Then, we have that  for some $\alpha \in (0,1)$, the following estimate holds,
\begin{equation}\label{ert}
\|\nabla_x U\|_{H^{\alpha, \frac{\alpha}{2}}(\overline{\Q_{1/2}^+})} +  \|y^a U_y\|_{H^{\alpha, \frac{\alpha}{2}}(\overline{\Q_{1/2}^+})} \leq C ( \|U\|_{L^{2}(\Q_1^+,|y|^adXdt)} + K).
\end{equation}
\end{lemma}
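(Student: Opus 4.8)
The plan is to prove the two estimates in \eqref{ert} separately: the one for $y^a U_y$ by passing to the \emph{conjugate} equation and applying Proposition~\ref{ac1} with the weight $-a$, and the one for $\nabla_x U$ by combining an interior estimate with a decay estimate towards $\{y=0\}$. First I would record the preliminary facts: since $\La$ is uniformly parabolic away from $\{y=0\}$, by standard interior parabolic theory $U$ is regular in $\Q_1^+\cap\{y>0\}$ (up to the regularity of $f$), and the weighted Caccioppoli inequality of \cite{CSe} controls $\int_{\Q_{1/2}^+}|y|^a|\nabla_X U|^2\,dXdt$ by $\|U\|_{L^2(\Q_1^+,|y|^a dXdt)}^2+K^2$. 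Throughout, $\|U\|_{L^2}$ abbreviates $\|U\|_{L^2(\Q_1^+,|y|^a dXdt)}$.

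\emph{Estimate for $y^a U_y$.} Put $w:=y^a U_y$, an even function of $y$. Using the equation in the form $\partial_y(y^a U_y)=y^a(\partial_t U-\Delta_x U-f)$ on $\{y>0\}$, a direct computation gives $\mathscr L_{-a}w=|y|^{-a}h$ with $h:=y^a\partial_y f$, and crucially, by the third bound in \eqref{der}, $|h|\le K|y|^{1+a}\le K$ on $\Q_1^+$, while $-a$ still lies in $(-1,1)$. The interior regularity of $U$ together with the energy estimates of \cite{CSe} for the $\mathscr L_{-a}$-equation show $w\in W^{1,2}_{\mathrm{loc}}(\Q_1,|y|^{-a}dXdt)$. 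We may then apply Proposition~\ref{ac1} with $-a$ in place of $a$: for every $\alpha\in(0,1)$ we obtain $\nabla_X w\in H^{\alpha,\alpha/2}$ up to $\{y=0\}$, and the quantitative estimate there, combined with $\||y|^{-a/2}w\|_{L^2}=\||y|^{a/2}U_y\|_{L^2}$ and the Caccioppoli inequality, yields
\[
\|\nabla_X w\|_{H^{\alpha,\alpha/2}(\overline{\Q_{1/2}^+})}\le C\big(\|U\|_{L^2}+K\big).
\]
As in the proof of Proposition~\ref{ac1} this makes $w=y^a U_y$ itself $H^{\alpha,\alpha/2}$ up to $\{y=0\}$ with the bound claimed in \eqref{ert}, and in particular $\nabla_x w=y^a(\nabla_x U)_y$ is bounded, $\|y^a(\nabla_x U)_y\|_{L^\infty(\Q_{1/2}^+)}\le C(\|U\|_{L^2}+K)$.

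\emph{Estimate for $\nabla_x U$.} Since $U=0$ on $\{y=0\}$, so is $\nabla_x U$, hence for $0<y\le 1/2$
\[
|\nabla_x U(x,y,t)|=\Big|\int_0^y s^{-a}\big[s^a(\nabla_x U)_s\big](x,s,t)\,ds\Big|\le \tfrac{1}{1-a}\,\|y^a(\nabla_x U)_y\|_{L^\infty(\Q_{1/2}^+)}\,y^{1-a}\le C(\|U\|_{L^2}+K)\,y^{1-a}.
\]
On the other hand $\nabla_x U$ solves $\La(\nabla_x U)=|y|^a\nabla_x f$ with $\|\nabla_x f\|_{L^\infty}\le K$, which in any cylinder $\Q_{y_0/2}(x_0,y_0,t_0)\subset\Q_1^+$ with $0<y_0\le\tfrac13$ is uniformly parabolic with bounded right-hand side. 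For $\alpha:=\min\{1-a,\beta_0\}$, with $\beta_0\in(0,1)$ the De Giorgi--Nash exponent of \cite{CSe}, the interior H\"older estimate together with the decay bound $\operatorname{osc}_{\Q_{y_0/2}(x_0,y_0,t_0)}\nabla_x U\le C(\|U\|_{L^2}+K)\,y_0^{1-a}$ gives
\[
[\nabla_x U]_{H^{\alpha,\alpha/2}(\Q_{y_0/4}(x_0,y_0,t_0))}\le \frac{C}{y_0^{\alpha}}\,y_0^{1-a}(\|U\|_{L^2}+K)+C y_0^{2-\alpha}K\le C\big(\|U\|_{L^2}+K\big),
\]
uniformly in such $(x_0,y_0,t_0)$ because $\alpha\le 1-a$. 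Feeding this interior estimate and the decay $|\nabla_x U|\le C y^{\alpha}$ into the standard dichotomy on the parabolic distance to $\{y=0\}$ (exactly as in the passage \eqref{eq:vy-est1}--\eqref{l1} of the proof of Proposition~\ref{ac1}) gives $\nabla_x U\in H^{\alpha,\alpha/2}(\overline{\Q_{1/2}^+})$ with the bound in \eqref{ert}; together with the previous step this proves the lemma.

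The main obstacle is the reduction to the conjugate equation. One has to verify that $w=y^a U_y$ is genuinely a weak $W^{1,2}_{\mathrm{loc}}(\Q_1,|y|^{-a})$ solution of $\mathscr L_{-a}w=|y|^{-a}h$ although no bound on $\partial_t U$ is assumed --- this is where the interior regularity of $U$ and the weighted Caccioppoli inequality for $\mathscr L_{-a}$ enter --- and, should $f$ fail to vanish on $\{y=0\}$, that the conjugate equation carries a thin term $\mathscr L_{-a}w=|y|^{-a}h+c(x,t)\,\mathscr H^n\lfloor_{\{y=0\}}$ with $c=\pm 2 f(\cdot,0,\cdot)$ bounded and Lipschitz in $x$ (since $|\nabla_x f|\le K$); in that case Proposition~\ref{ac1} is replaced by the Schauder estimate for $\mathscr L_{-a}$ with bounded right-hand side and bounded, $x$-H\"older Neumann datum on the thin manifold, proved by the same reflection-and-iteration scheme as Theorem~\ref{schauder} (compare \cite{CDS}). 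Granting this, the remaining steps are routine in view of the elliptic theory already recalled.
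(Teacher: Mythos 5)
Your first step coincides with the paper's: pass to the conjugate equation $\mathscr L_{-a}w=(\tilde f)_y$ for $w=|y|^a U_y$, noting that $|\partial_y f|\le Ky$ makes the bulk right-hand side $|y|^{-a}G$ with $G=|y|^a\partial_y\tilde f$ bounded. The paper then stops there, invoking only the De Giorgi--Nash--Moser H\"older continuity of $w$ via \cite{CSe}, and handles $\nabla_x U$ by an entirely separate difference-quotient bootstrap on $U_{h,e_i}$: these quotients vanish on $\{y=0\}$, are odd in $y$, and solve an $\mathscr L_a$-equation with a uniformly bounded right-hand side (by $\|\nabla_x f\|_{L^\infty}\le K$), so no thin datum ever enters. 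You instead want the much stronger conclusion $\nabla_X w\in H^{\alpha,\alpha/2}$ from Proposition~\ref{ac1}, and then feed the bound on $y^a(\nabla_x U)_y=\nabla_x w$ into a decay-plus-interior dichotomy for $\nabla_x U$. That is a coherent plan, but it cannot be carried out under \eqref{der}.

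The reason is exactly the thin datum you mention as an afterthought, which here is generically present: since $U\equiv 0$ on $\{y=0\}$, letting $y\to 0^+$ in $U_{yy}+\tfrac{a}{y}U_y=U_t-\Delta_x U-f$ gives $\partial_y^{-a}w(x,0,t)=\lim_{y\to 0^+}y^{-a}\partial_y w=-f(x,0,t)$, so distributionally $\mathscr L_{-a}w=|y|^{-a}G + 2f(\cdot,0,\cdot)\,\mathscr H^n\lfloor_{\{y=0\}}$ in $\Q_1$. Proposition~\ref{ac1} is stated for even solutions with \emph{no} thin term and therefore does not apply to $w$. Your fallback, a parabolic Schauder estimate for $\mathscr L_{-a}$ with Neumann datum $c=-2f(\cdot,0,\cdot)$, would need, beyond \eqref{der}, some parabolic H\"older control of $f(\cdot,0,\cdot)$ in $t$ and a bound on $\nabla_x\partial_y f$ for the equation satisfied by $\nabla_x w$; neither is furnished by \eqref{der}, and in fact $\nabla_X w\notin L^\infty$ in general, since $\partial_y w\sim -f(\cdot,0,\cdot)\,|y|^{a}$ blows up near $\{y=0\}$ when $a<0$ and $f(\cdot,0,\cdot)\neq 0$. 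Consequently the bound $\|y^a(\nabla_x U)_y\|_{L^\infty}\le C(\|U\|_{L^2}+K)$, the decay $|\nabla_x U|\lesssim y^{1-a}$, and therefore the entire second half of your argument, are not justified. The paper's choice --- De Giorgi--Nash--Moser for $w$ (which tolerates an $L^\infty$ Neumann datum) plus the difference-quotient bootstrap for $\nabla_x U$ (which never passes to the conjugate equation and hence never sees the thin term) --- is precisely what sidesteps this obstruction under the hypotheses \eqref{der}, and is the route you should follow.
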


\begin{proof}
By an odd reflection, we note that $U$ solves a similar equation in $\Q_1$ with bounded right hand side. Therefore from the regularity result in \cite{CSe} it follows that $U \in H_{\rm loc}^{\alpha, \frac{\alpha}{2}}$ up to $\{y=0\}$. Now by taking repeated difference quotients of the type 
\[
U_{h, e_i}= \frac{U(x+he_i, y, t)- U(X,t)}{h^{k \alpha}}
\]
for $k=1, 2...$  and so on and by using zero Dirichlet conditions, the desired  estimate for $\nabla_x U$  follows by a repeated  application of  such a H\"older continuity result.  Now after an odd reflection,  we also have  that $ w= |y|^a U_y$ solves the following conjugate equation
\[
\mathscr{L}_{-a} w=  (\tilde f)_y
\]
in $\Q_1$ where $\tilde f$ is the odd extension of $f$ across $\{y=0\}$. Moreover by  using the second estimate in \eqref{der}, we observe that $w$ solves an equation of the type
\[
\mathscr{L}_{-a} w=  |y|^{-a} G
\]
where $G \in L^{\infty}$. Over here we also use the fact that $a \in (-1,1)$. Then again from \cite{CSe} it follows that $w$ is H\"older continuous in $\Q_{1/2}$ from the which the desired H\"older estimate for $y^a U_y$ follows. 
\end{proof}

\end{document}